\newcommand{\R}{\mathbb{R}}
\newcommand{\N}{\mathbb{N}}
\newcommand{\splines}{\mathscr{S}}
\newcommand{\anchors}{\mathscr{A}_{\mspace{-2mu}\p}}
\newcommand{\anchorsold}{\mathscr{A}_{\mspace{-2mu}\p}^{(n)}}
\newcommand{\anchorsnew}{\mathscr{A}_{\mspace{-2mu}\p}^{(n+1)}}
\newcommand{\anchorsone}{{\mathscr{A}^{\mspace{-4mu}(1)}_{\mspace{-2mu}\p}}}
\newcommand{\anchorstwo}{{\mathscr{A}^{\mspace{-4mu}(2)}_{\mspace{-2mu}\p}}}
\newcommand{\tjunctions}{\mathbb{T}}
\newcommand{\tjunctionsone}{\mathbb{T}^{\mspace{-2mu}(1)}}
\newcommand{\tjunctionstwo}{\mathbb{T}^{\mspace{-2mu}(2)}}
\newcommand{\partsupp}[1][\anchor,x,\ell]{\operatorname{partsupp}(#1)}
\newcommand{\MBox}[1][\anchorone, \anchortwo]{\operatorname{MBox}(#1)}
\newcommand{\inddomain}{\Omega}
\newcommand{\pardomain}{\widehat\Omega}
\newcommand{\globind}[2][\anchor]{\mathscr{I}_{#2}(#1)}
\newcommand{\locind}[2][\anchor]{\mathrm v\mspace{-2mu}_{#2}(#1)}
\newcommand{\locextind}[2][\Tjunc]{\mathrm v\mspace{-2mu}_{#2}(#1)}
\newcommand{\mesh}{\mathscr{T}}
\newcommand{\meshold}{\mathscr{T}^{(n)}}
\newcommand{\meshnew}{\mathscr{T}^{(n+1)}}
\newcommand{\meshsize}{h_{\mesh}}
\newcommand{\skel}{\mathrm{Sk}}
\newcommand{\Hdim}[1]{\mathscr{H}^{(#1)}} 
\newcommand{\Horth}[1]{\mathtt{H}^{(#1)}} 
\newcommand{\AR}{\mathrm{A\mspace{-2mu}R}_{\mspace{-2mu}\p}}
\newcommand{\FR}{\mathrm{F\mspace{-2mu}R}_{\mspace{-2mu}\p}}
\newcommand{\WDC}{{\mathrm{W\mspace{-3mu}D\mspace{-2mu}C}}}
\newcommand{\SDC}{{\mathrm{S\mspace{-2mu}D\mspace{-2mu}C}}}
\newcommand{\AAS}{{\mathrm{A\mspace{-4mu}A\mspace{-2mu}S}}}
\newcommand{\SGAS}{{\mathrm{S\mspace{-2mu}G\mspace{-2mu}A\mspace{-2mu}S}}}
\newcommand{\WGAS}{{\mathrm{W\mspace{-3mu}G\mspace{-2mu}A\mspace{-2mu}S}}}
\newcommand{\ATJ}{\mathrm{A\mspace{-2mu}T\mspace{-2mu}J}}
\newcommand{\GTJ}{\mathrm{G\mspace{-1mu}T\mspace{-2mu}J}}
\newcommand{\slice}{\mathrm S}
\newcommand{\overlaps}{\bowtie}
\newcommand{\overlapsneq}{\mathrel{\tikz[baseline=-2pt]{%
\node[outer sep=0pt, inner sep=0pt, anchor=center] (b) at (0,0){$\bowtie$}; 
\draw ($(b.south west)+(.85pt,-.85pt)$)--($(b.south east)+(-.85pt,-.85pt)$)
($(b.south)+(-.85pt,-1.7pt)$)--++(1.7pt,1.7pt);}}}
\newcommand{\partialoverlaps}{\ltimes}
\newcommand{\npartialoverlaps}{{\not\hspace{-2pt}\ltimes}}
\newcommand{\weak}{\mathrm{w}}
\newcommand{\p}{\mathbf{p}}
\newcommand{\Tjunc}{\mathtt{T}}
\newcommand{\Tjuncone}{\mathtt{T}'}
\newcommand{\Tjunci}[1]{\mathtt{T}^{(#1)}}
\newcommand{\diri}[1]{ k^{(#1)} }
\newcommand{\cell}{\mathtt{Q}}
\newcommand{\face}[1][F]{\mathtt{#1}}
\newcommand{\faceE}{\face[E]}
\newcommand{\anchor}{\mathbf{A}}
\newcommand{\anchorone}{\anchor^{\mspace{-4mu}(1)}}
\newcommand{\anchortwo}{\anchor^{\mspace{-4mu}(2)}}
\newcommand{\anchorthree}{\anchor^{\mspace{-4mu}(3)}}
\newcommand{\anchorfour}{\anchor^{\mspace{-4mu}(4)}}
\newcommand{\with}{\quad \text{with }}
\newcommand{\und}{\quad \text{and }\quad }
\newcommand{\T}{T\protect\nobreakdash} 
\newcommand{\oset}[1]{(#1)} 
\newlist{case}{enumerate}{2}
\setlist[case,1]{label=\emph{Case~\arabic*:},ref =\arabic*}
\setlist[case,2]{label=\emph{\roman*)},ref=\thecasei\roman*}
\crefname{casei}{Case}{Cases}
\newlist{shortcase}{enumerate}{2}
\setlist[shortcase,1]{noitemsep,label=\emph{\arabic*)},ref =\arabic*}
\setlist[shortcase,2]{noitemsep,label=\emph{\roman*)},ref=\thecasei\roman*}
\crefname{shortcasei}{Case}{Cases}
\DeclareMathOperator{\Forall}{\forall} 
\DeclareMathOperator{\Exists}{\exists} %
\DeclareMathOperator{\supp}{supp}
\newcommand{\suppind}{\supp_\inddomain}
\newcommand{\suppindBA}[1][\anchor]{\suppind B_{#1}}
\DeclareMathOperator{\spn}{span}
\DeclareMathOperator{\conv}{conv}
\DeclareMathOperator{\midp}{mid}
\DeclareMathOperator{\odir}{odir}
\DeclareMathOperator{\pdir}{pdir}
\DeclareMathOperator{\ascell}{ascell}
\DeclareMathOperator{\parents}{parents}
\DeclareMathOperator{\subdiv}{\textsc{subdiv}}
\newcommand{\notni}{\not\ni}
\let \nothing \varnothing 
\let\save@mathaccent\mathaccent
\newcommand*\if@single[3]{%
  \setbox0\hbox{${\mathaccent"0362{#1}}^H$}%
  \setbox2\hbox{${\mathaccent"0362{\kern0pt#1}}^H$}%
  \ifdim\ht0=\ht2 #3\else #2\fi
  }
\newcommand*\rel@kern[1]{\kern#1\dimexpr\macc@kerna}
\newcommand*\widebar[1]{\@ifnextchar^{{\wide@bar{#1}{0}}}{\wide@bar{#1}{1}}}
\newcommand*\wide@bar[2]{\if@single{#1}{\wide@bar@{#1}{#2}{1}}{\wide@bar@{#1}{#2}{2}}}
\newcommand*\wide@bar@[3]{%
  \begingroup
  \def\mathaccent##1##2{%
    \let\mathaccent\save@mathaccent
    \if#32 \let\macc@nucleus\first@char \fi
    \setbox\z@\hbox{$\macc@style{\macc@nucleus}_{}$}%
    \setbox\tw@\hbox{$\macc@style{\macc@nucleus}{}_{}$}%
    \dimen@\wd\tw@
    \advance\dimen@-\wd\z@
    \divide\dimen@ 3
    \@tempdima\wd\tw@
    \advance\@tempdima-\scriptspace
    \divide\@tempdima 10
    \advance\dimen@-\@tempdima
    \ifdim\dimen@>\z@ \dimen@0pt\fi
    \rel@kern{0.6}\kern-\dimen@
    \if#31
      \overline{\rel@kern{-0.6}\kern\dimen@\macc@nucleus\rel@kern{0.4}\kern\dimen@}%
      \advance\dimen@0.4\dimexpr\macc@kerna
      \let\final@kern#2%
      \ifdim\dimen@<\z@ \let\final@kern1\fi
      \if\final@kern1 \kern-\dimen@\fi
    \else
      \overline{\rel@kern{-0.6}\kern\dimen@#1}%
    \fi
  }%
  \macc@depth\@ne
  \let\math@bgroup\@empty \let\math@egroup\macc@set@skewchar
  \mathsurround\z@ \frozen@everymath{\mathgroup\macc@group\relax}%
  \macc@set@skewchar\relax
  \let\mathaccentV\macc@nested@a
  \if#31
    \macc@nested@a\relax111{#1}%
  \else
    \def\gobble@till@marker##1\endmarker{}%
    \futurelet\first@char\gobble@till@marker#1\endmarker
    \ifcat\noexpand\first@char A\else
      \def\first@char{}%
    \fi
    \macc@nested@a\relax111{\first@char}%
  \fi
  \endgroup
}
\definecolor{LUH-gray}   {RGB}{153,153,153}
\definecolor{LUH-lgray}  {RGB}{204,204,204}
\definecolor{LUH-red}    {RGB}{168,  6,  0}
\definecolor{LUH-lred}   {RGB}{220,155,153}
\definecolor{LUH-blue}   {RGB}{  0, 80,155}
\definecolor{LUH-lblue}  {RGB}{153,185,216}
\definecolor{LUH-llblue} {RGB}{204,220,235}
\definecolor{LUH-green}  {RGB}{200,211, 23}
\definecolor{LUH-lgreen} {RGB}{233,237,162}
\colorlet{LUH-light-gray}{LUH-lgray}
\colorlet{LUH-light-red}{LUH-lred}
\colorlet{LUH-light-green}{LUH-lgreen}
\colorlet{LUH-light-blue}{LUH-lblue}
\colorlet{LUH-lightest-blue}{LUH-llblue}
\title{Multivariate analysis-suitable T-splines of arbitrary degree}
\author{
Robin G\"ormer\thanks{Leibniz University Hannover, Institue of Applied Mathematics, Welfengarten~1, 30167 Hannover, Germany. Email: \email{goermer@ifam.uni-hannover.de}, \email{morgenstern@ifam.uni-hannover.de}}
\and Philipp Morgenstern\footnotemark[2]
}
\definecolor{RED}{rgb}{1,0,0}\definecolor{BLUE}{rgb}{0,0,1} 
\begin{document}

\maketitle                   
\begin{abstract}
This paper defines analysis-suitable T-splines for arbitrary degree (including even and mixed degrees) and arbitrary dimension. We generalize the concept of anchor elements known from the two-dimensional setting, extend  existing concepts of analysis-suitability and show their sufficiency for linearly independent T-Splines.
\end{abstract}
\begin{keywords}
multivariate T-splines, Analysis-Suitability, Dual-Compatibility
\end{keywords}
\begin{AMS}
65D07, 65D99, 65K99
\end{AMS}

\newcommand{\srcpath}{sections} 

\section{Introduction}
\T-splines were introduced in 2003 in computer-aided design as a new realization for B-splines on non-uniform meshes \cite{SederbergEtAl03} with local mesh refinement \cite{TsplineSimplification2004}.
Shortly after, Isogeometric Analysis was introduced, and \T-splines were applied as ansatz functions for Galerkin schemes with promising results \cite{IGAusingTsplines2010, aIGAwithTsplines2010}, but were proven to lack linear independence in certain cases \cite{particularTmeshes2010}, which actually excludes them from the application in a Galerkin method.
The issue was solved in 2012 \cite{LiEtAl12}, proving that linear independence is guaranteed  if meshline extensions at the hanging nodes, called \T-junction extensions, do not intersect. This criterion is referred to as \emph{analysis-suitability} in the literature, however we denote it as \emph{geometric analysis-suitability} in this paper for distinction against abstract analysis-suitability below.
Still in 2012, the introduction of dual-compatibility  and its equivalence to analysis-suitability \cite{VeigaBuffaEtAl12} provided new insight on the linear independence of \T-splines, and
in 2013, analysis-suitability was generalized to \T-splines of arbitrary polynomial degree \cite{VeigaBuffaEtAl13}, still restricted to the two-dimensional case,
while dual-compatibility could easily be generalized to higher dimensions \cite[Definition~7.2]{VeigaBuffaEtAl14}.
At that time, techniques for the construction of 3D \T-spline meshes from boundary representations were introduced  \cite{3dTsplinesGenus0topo2012,3dTsplinesArbGenustopo2013}, motivating the theoretical research on \T-splines in three space dimensions, but in particular the linear independence of higher-dimensional \T-splines was only characterized through the dual-compatibility criterion, until in 2016, an abstract version of analysis-suitability in three dimensions \cite{Morgenstern16} was introduced and, in 2017, generalized to arbitrary dimension \cite{Morgenstern17}, but only for odd polynomial degrees.
Throughout this paper, we refer to this version as \emph{abstract analysis-suitability} ($\AAS$), and to its equivalent strong version of dual-compatibility as $\SDC$, while we abbreviate the weaker version from \cite{VeigaBuffaEtAl14} with $\WDC$.

This paper generalizes  abstract analysis-suitability from \cite{Morgenstern17} 
to arbitrary degrees and  geometric analysis-suitability from \cite{LiEtAl12} to arbitrary dimensions.
We investigate the sufficiency for linearly independent spline bases as well as the relations and implications between all above-mentioned versions of analysis-suitability and dual-compatibility (see \cref{fig: nesting behavior of mesh classes} for a visualization of the results).

The paper is organized as follows.
In \cref{sec: highdim tjunctions}, we investigate \T-junctions in the high-dimensional setting, i.e.\@ hanging $(d-2)$-dimensional interfaces in $d$-dimensional box meshes.
In \cref{sec: highdim tsplines}, we generalize the concept of anchor elements from \cite{VeigaBuffaEtAl13} to arbitrary dimension, as outlined in \cite{GoermerMorgenstern21a}.
This allows a straight-forward generalization of \cite{Morgenstern17} to arbitrary degrees in \cref{sec: AS}.
The generalization of T-junction extensions from \cite{VeigaBuffaEtAl13} is more technical, but yields geometric criteria for linearly independent splines that can easily be visualized and checked. We define a weak and a strong version of geometric analysis-suitability ($\WGAS$ and $\SGAS$, respectively).
For the strong version, we prove sufficiency for linearly independence of the \T-splines, for the weak version we conjecture it, see \cref{thm: wgas implies wdc}, providing two incomplete proofs.
\Cref{sec: DC} recalls the concept of dual-compatibility, which is already available for arbitrary degree and dimension \cite{VeigaBuffaEtAl14,Morgenstern17} and does not need further generalization. 
In \cref{sec: theorems}, we show that the equivalence of $\AAS$ and  $\SDC$ is valid analogously to the odd-degree case from \cite{Morgenstern17}.
We further show that $\SGAS$  implies $\AAS$ and argument, however with incomplete proof, that $\WGAS$ implies $\WDC$.
To apply results of dual-compatible splines such as linear independence or projection properties, it is hence sufficient that the considered mesh is  analysis-suitable in the geometric \emph{or} abstract sense. 
Conclusions and outlook to future work are given in \cref{sec: outlook}.
\begin{figure}[t!]
\centering
\begin{tikzpicture}[scale=.75]
 \draw[thick,dotted] (0,.25) ellipse (55mm and 20mm);
 \draw[thick,dashed] (1.25,0) ellipse (35mm and 15mm);
 \draw[thick] (-1.25,0) ellipse (35mm and 15mm);
 \node at (0,2) {$\WDC$};
 \node[rotate=-22] at (3.4,.8) {$\AAS=\SDC$};
 \node[rotate=22] at (-3.4,.8) {$\WGAS$};
 \node at (0,0) {$\SGAS$};
\end{tikzpicture}
\caption{Nesting behavior of the mesh classes considered in this paper.}
\label{fig: nesting behavior of mesh classes}
\end{figure}
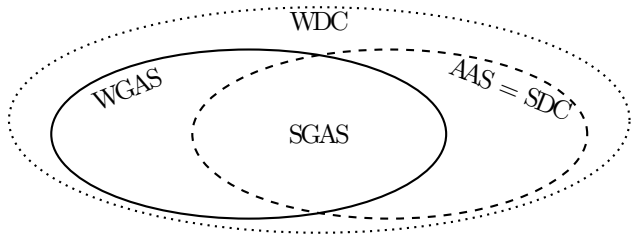

\section{T-junctions in high-dimensional box meshes}\label{sec: highdim tjunctions}
We consider a box-shaped open index domain $\inddomain=\bigtimes_{k=1}^d(0,N_k)$, with $N_k\in \N$ for $k=1,\dots,d$ 
and an associated parametric domain $\pardomain = \bigtimes_{k=1}^d(\xi_0^{(k)}, \xi_{N_k}^{(k)})$, with global $p_k$-open knot vectors 
$\Xi^{(k)} = \{ \xi_0^{(k)} ,\dots,\xi_{N_k}^{(k)} \}$, for polynomial degrees $p_k\in\N$.   
Let $\mesh$ be a mesh of $\inddomain$, consisting of open axis-parallel boxes with integer vertices%
, and constructed via symmetric bisections of boxes from an initial tensor-product mesh, which is described in detail in \cref{alg: subdiv}. We assume to obtain integer vertices from \cref{alg: subdiv}, i.e. that for the bisection of a cell $\cell$ in direction $j$ we get $m = \tfrac{1}{2}(\inf\cell_j + \sup\cell_j) \in \N$. 
This excludes for example mesh configurations as shown in \cref{fig: excluded meshes}.
Further, $\mesh$ contains all lower-dimensional entities such as hyperfaces, faces, edges and vertices of these boxes.
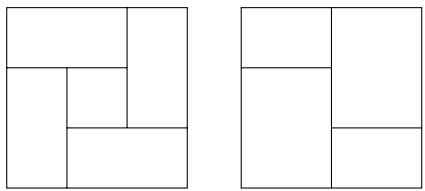
\begin{figure}[b!]
\centering
\begin{tikzpicture}[scale=.8]
 \draw (0,0) rectangle (3,3)  (1,0)--(1,2) (0,2)--(2,2) (2,3)--(2,1) (3,1)--(1,1);
\end{tikzpicture}\qquad
\begin{tikzpicture}[scale=.8]
 \draw (0,0) rectangle (3,3) (1.5,0)--(1.5,3) (0,2)--(1.5,2) (1.5,1)--(3,1);
\end{tikzpicture}
 \caption{Two examples of excluded mesh configurations.}
 \label{fig: excluded meshes}
\end{figure}
For $k=1,\dots,d$,  we denote by $\Hdim k$ the set of open $k$-dimensional mesh entities of $\mesh$, e.g. by $\Hdim0$ the set of nodes, by $\Hdim1$ the set of one-dimensional edges without start and end point, by $\Hdim2$ the set of two-dimensional faces without the boundary edges, and so on,
such that the union $\overline\inddomain=\bigcup\mesh$, with $\mesh=\bigcup_{j=0}^d\Hdim j$, is disjoint.
The union of all element boundaries 
$ \skel = \bigcup_{\cell\in\Hdim d}\partial \cell = \bigcup_{j=0}^{d-1}\Hdim j = \overline\inddomain \setminus \Hdim d$
is called the \emph{skeleton} of $\mesh$.
Note that this includes not only the 1-dimensional edges, but also the faces and hyperfaces up to dimension $d{-}1$.
For an index set $\kappa=\{\kappa_1,\dots,\kappa_\iota\}\subset\{1,\dots,d\}$ and a $d$-dimensional (volumetric) element $\cell=\cell_1\times \dots\times \cell_d \in \Hdim d$ composed from open intervals $\cell_1,\dots,\cell_d$, we denote the $(d-\iota)$-dimensional, $\kappa$-orthogonal interfaces by $\Horth\kappa(\cell)$, i.e. 
\begin{equation}
	\Horth\kappa(\cell) \coloneqq \{ \widetilde \cell =\widetilde \cell_1\times \dots\times \widetilde \cell_d \mid \widetilde \cell_j \subsetneq \partial \cell_j\text{ for } j \in \kappa, \, \widetilde \cell_j = \cell_j \text{ for } j\not\in\kappa\} ,
\end{equation}
where the components $\widetilde \cell_j$ are either singleton sets or open intervals with start and end points in $\{0,\dots,N_j\}$. 

The global set of $\kappa$-orthogonal mesh entities is denoted by $\Horth\kappa = \bigcup_{\cell\in\Hdim d}\Horth\kappa(\cell)$, with $\Horth\nothing(\cell)=\{\cell\}$ and $\Horth\nothing=\Hdim d$.
For singleton index sets, we write $\Horth{j}\coloneqq\Horth{\{j\}}$, and we call $\skel_j\coloneqq \bigcup_{E\in\Horth{j}} \widebar{E}$ the $j$-orthogonal skeleton of $\mesh$. Note that it is composed of $(d-1)$-dimensional hyperfaces, see \cref{fig: orth skeleton} for an example.

\begin{figure}[t!]
\centering
 \includegraphics[width=.3\textwidth]{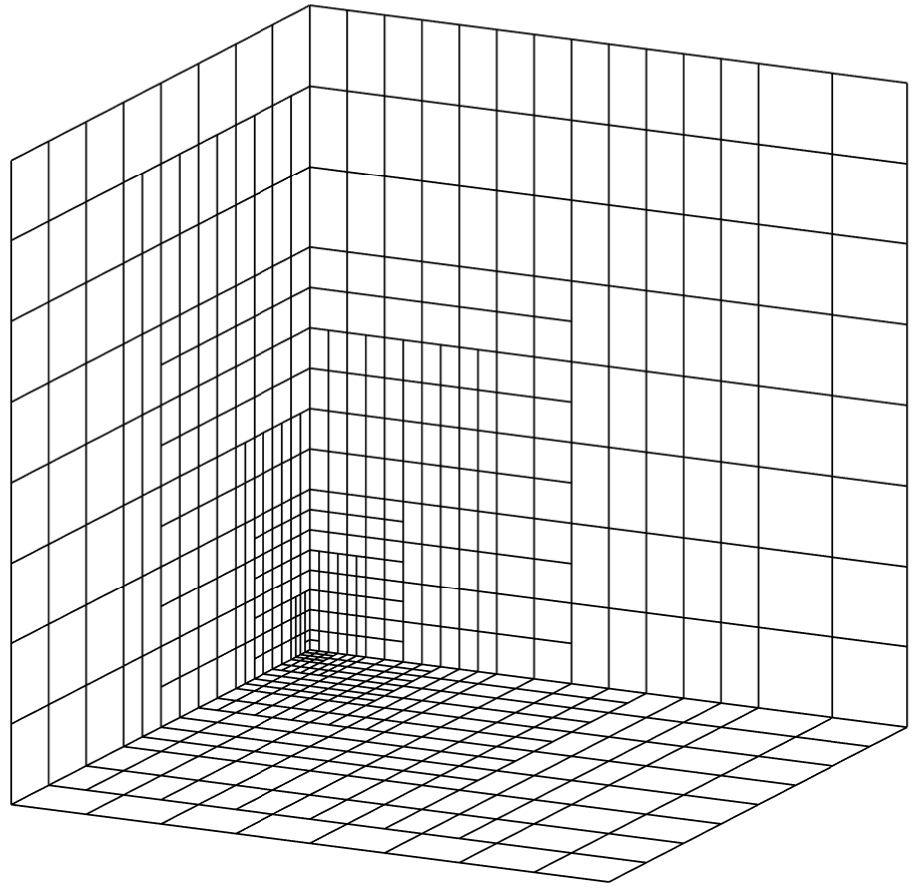}%
 \hspace{.1\textwidth}%
 \includegraphics[width=.3\textwidth]{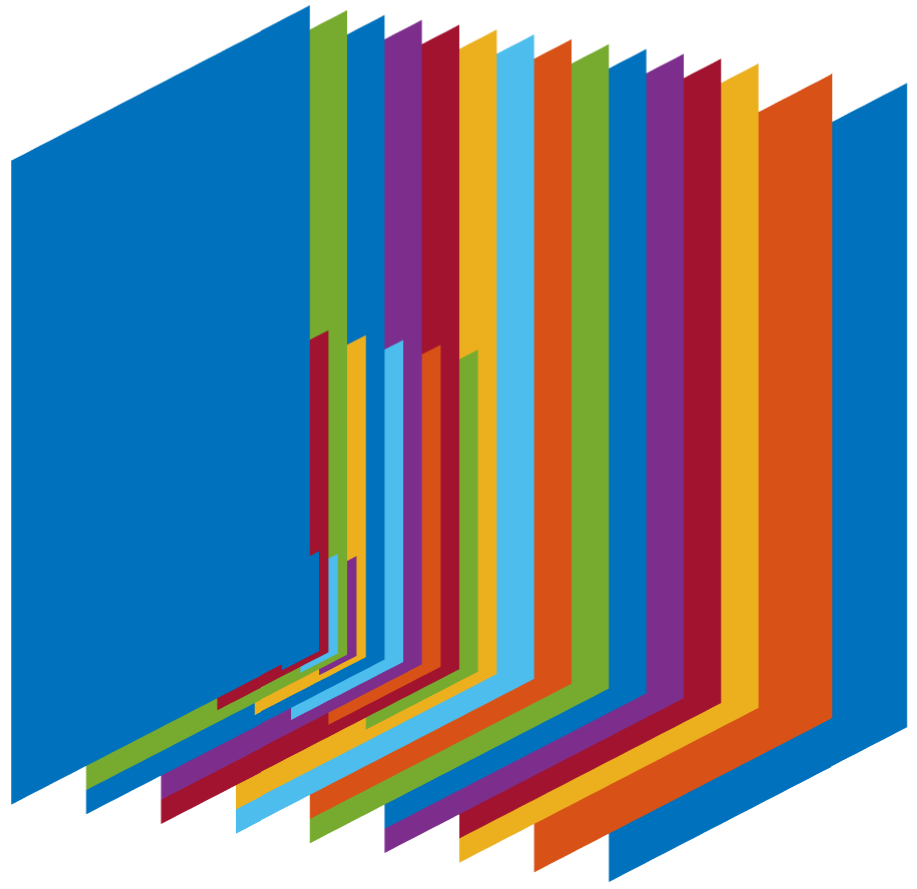}\\[.05\textwidth]
 \begin{tikzpicture}
      \def\xo{ -0.75 } \def\xl{ 0.5 }
      \def\yo{ -0.25 } \def\yl{ 0.5 }
      \def\zo{  0.25 } \def\zl{ 0.5 }
      \draw[thick, ->] (\xo, \yo, \zo) -- (\xo + \xl, \yo, \zo) node[right,outer sep=0pt,inner sep=1pt] {\small1}; 
      \draw[thick, ->] (\xo, \yo, \zo) -- (\xo, \yo + \yl, \zo)  node[above,outer sep=0pt,inner sep=1pt] {\small2}; 
      \draw[thick, ->] (\xo, \yo, \zo) -- (\xo, \yo, \zo - \zl) node[above right,outer sep=0pt,inner sep=1pt] {\small3};
  \end{tikzpicture}
 \includegraphics[width=.3\textwidth]{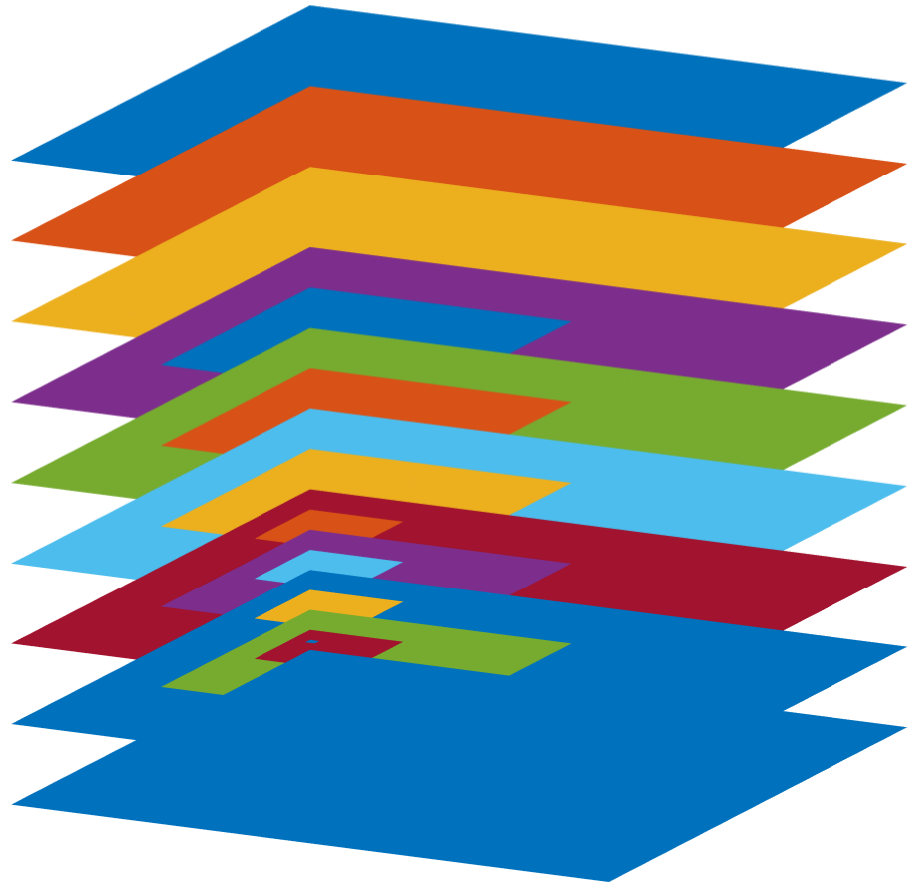}%
 \hspace{.1\textwidth}%
 \includegraphics[width=.3\textwidth]{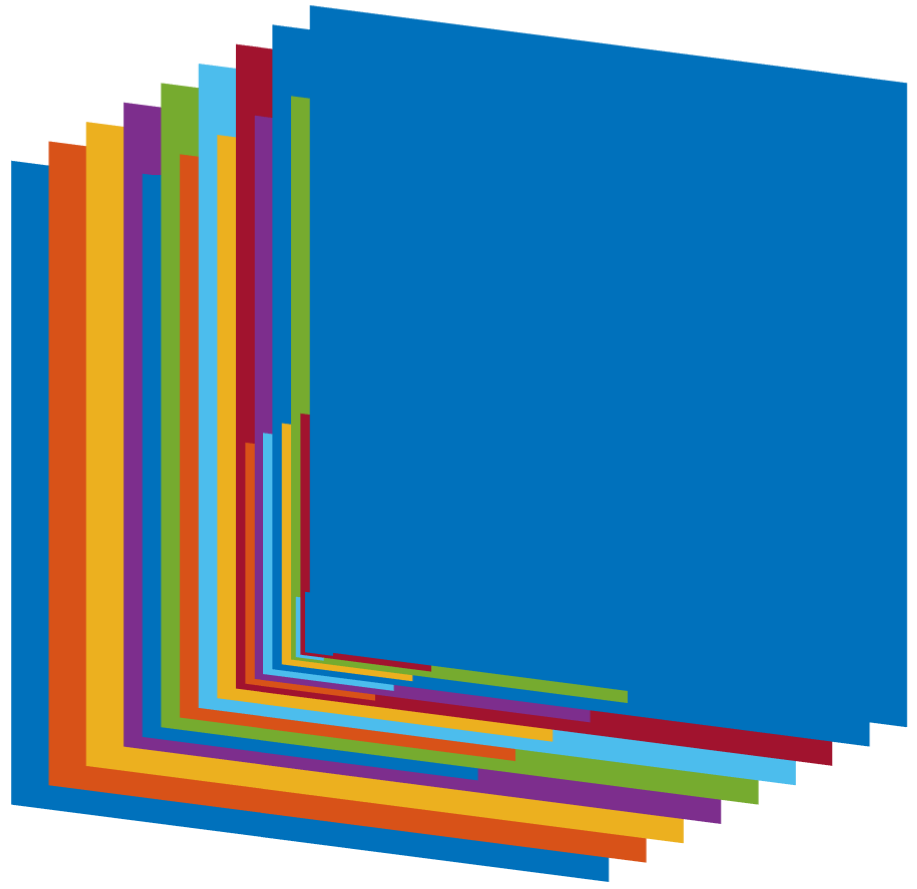}%
\rule{2em}{0pt}
 \newcommand{\x}{\protect\nobreakdash}
 \caption{A 3-dimensional mesh, refined in the front corner (top left), 
 and the corresponding 1\x-orthogonal, 2\x-orthogonal and 3\x-orthogonal 
  skeleton (top right, bottom left, bottom right, respectively).}
 \label{fig: orth skeleton}
\end{figure}

For polynomial degrees $\p = (p_1,\dots,p_d)\in \N^d$, we split the index domain $\inddomain$ into 
an \emph{active region} $\AR$ and a \emph{frame region} $\FR$, with
\begin{equation}
  \AR \coloneqq \bigtimes_{k=1}^d \Bigl[ \bigl\lfloor \tfrac{p_k+1}{2} \bigr\rfloor, N_k - \bigl\lfloor \tfrac{p_k+1}{2} \bigr\rfloor\Bigr] \und \FR \coloneqq \widebar{\inddomain\setminus\AR}.
\end{equation}

\begin{algorithm}
 \caption{Subdivision of a cell.}
 \label{alg: subdiv}
 \begin{algorithmic}
  \Procedure{subdiv}{$\mesh,\cell,j$}
   \State{assert that $\cell\subset\AR$}
   \State{$D\gets \overline\cell$}
   \ForAll{$\ell=1,\dots,d$, $\ell\ne j$}
    \If{$\min D_\ell=\lfloor \tfrac{p_k+1}{2} \rfloor$}
    \Comment{If $D$ touches the frame region, then }
     \State{$D_\ell\gets D_\ell\cup\bigl[0,\lfloor \tfrac{p_k+1}{2} \rfloor\bigr]$}
     \Comment{extend it to the end of the domain.}
    \EndIf
    \Comment{See \cref{rem: explaining subdiv for frame region} for an explanation.}
    \If{$\max D_\ell=N_\ell-\lfloor \tfrac{p_k+1}{2} \rfloor$}
     \State{$D_\ell\gets D_\ell\cup\bigl[N_\ell-\lfloor \tfrac{p_k+1}{2} \rfloor,N_\ell\bigr]$}
    \EndIf
   \EndFor
   \ForAll{$\faceE\in\mesh$, $\faceE\subset D$, $\faceE_j=\cell_j$}
    \State {$m\gets\tfrac12(\inf\cell_j+\sup\cell_j)$} 
    \State {$\faceE^{(1)}\gets\faceE_1\times\dots\times\faceE_{j-1}\times(\inf\cell_j,m)\times\faceE_{j+1}\times\dots\times\faceE_d$}
    \State {$\faceE^{(2)}\gets\faceE_1\times\dots\times\faceE_{j-1}\times\{m\}\times\faceE_{j+1}\times\dots\times\faceE_d$}
    \State {$\faceE^{(3)}\gets\faceE_1\times\dots\times\faceE_{j-1}\times(m,\sup\cell_j)\times\faceE_{j+1}\times\dots\times\faceE_d$}
    \State {$\mesh\gets\mesh\setminus\{\faceE\}\cup\{\faceE^{(1)},\faceE^{(2)},\faceE^{(3)}\} $}
    \Comment{Since $D$ is a superset of $\cell$, }
   \EndFor
   \Comment{at least $\cell$ is subdivided.}
   \State {return $\mesh$}
  \EndProcedure
 \end{algorithmic}
\end{algorithm}

Consider two cells $\cell^{(1)}$, $\cell^{(2)}\in\Hdim{d}$ that share a common face $\face[P]\in\Hdim{d-1}$,
\linebreak $\partial\cell^{(1)}\cap\partial\cell^{(2)}=\overline {\face[P]}$.
The $j$-orthogonal subdivision of $\cell^{(1)}$, i.e. the bisection of $\cell_j^{(1)}$, for some direction $j$ which is not orthogonal to $\face[P]$,
removes all mesh entities $\faceE=\faceE_1\times\dots\times\faceE_d$ with $\faceE_j=\cell^{(1)}_j$ and inserts child entities 
$\faceE^{(1)},\faceE^{(2)},\faceE^{(3)}$ including the children $\cell^{(1,1)}$ and $\cell^{(1,2)}$ of $\cell^{(1)}$, with $\midp\cell^{(1)}_j=\tfrac12(\inf\cell^{(1)}_j+\sup\cell^{(1)}_j)$. This procedure is summarized in \cref{alg: subdiv}, where additional subdivisions are done, whenever the cell to be subdivided touches the frame region, see \cref{rem: explaining subdiv for frame region} for an explanation.
Since the children inherit all but the $j$-th component of $\cell^{(1)}$, they satisfy 
$\partial \cell^{(1,1)} \cap \partial \cell^{(2)} \neq\nothing$ and $\partial \cell^{(1,2)}\cap \partial \cell^{(2)}\neq\nothing$. 
Furthermore, we see that $\cell^{(1,1)}$ and $\cell^{(1,2)}$ share a face $\face = \cell^{(1)}_1\times\dots\times\cell^{(1)}_{j-1}\times\{\midp\cell^{(1)}_j\}\times\cell^{(1)}_{j+1}\times\dots\times\cell^{(1)}_d \in\Hdim{d-1}$. By subdividing $\cell^{(1)}$ we have thus generated an interface $\overline{\face[I]} = \overline\face \cap \partial \cell^{(2)},\enspace \face[I]\subset\Hdim{d-2}$, that is in the boundary of exactly three cells $\cell^{(2)}, \cell^{(1,1)},$ and $\cell^{(1,2)}$. We classify this type of entities in the following definition.

\begin{definition}[\T-junctions]
We call an interface $\Tjunc\in\Hdim{d-2}$ with $\Tjunc\nsubseteq\partial\inddomain$  a \emph{hanging interface} or \emph{\T-junction} if it has valence
\mbox{$|\{ \face \in \Hdim{d-1} \mid \Tjunc\subset \partial \face\}|< 4$}%
, or equivalently, if it is in the boundary of a cell $\cell=\cell_1\times\dots\times\cell_d\in\mesh$ without being connected to any of its vertices, $\Tjunc\subset\partial\cell$, $\overline \Tjunc\cap \partial\cell_1\times\dots\times\partial\cell_d=\nothing$.
We then call $\cell$ the \emph{associated cell} of $\Tjunc$ and write $\cell=\ascell(\Tjunc)$.
Since $\Tjunc = \Tjunc_1 \times \dots \times \Tjunc_d\in\Hdim{d-2}$, there are two unique and distinct directions $i,j\in\{1,\dots,d\}$ such that $\Tjunc_i,\Tjunc_j$ are singletons, $\Tjunc\in\Horth{\{i,j\}}$, $\Tjunc_i\subsetneq \cell_i$  and $\Tjunc_j \subsetneq \partial\cell_j$.
We call $i$ the \emph{orthogonal direction} and $j$ the \emph{pointing direction} of $\Tjunc$, and write $\odir(\Tjunc)=i$, $\pdir(\Tjunc)=j$.
\end{definition}

\begin{proposition}\label{lemma: t-junctions props are well-defined}
 For any \T-junction $\Tjunc$, the above-defined $\ascell(\Tjunc)$, $\odir(\Tjunc)$ and $\pdir(\Tjunc)$ are unique.
\end{proposition}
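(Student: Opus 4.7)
The plan is to treat the claim in two stages: uniqueness of the labels $(\odir,\pdir)$ given a fixed associated cell is immediate, so the core work is the uniqueness of the associated cell. First, $\Tjunc\in\Hdim{d-2}$ forces exactly two components of $\Tjunc = \Tjunc_1\times\cdots\times\Tjunc_d$ to be singletons, so the unordered set $\{i,j\}\subset\{1,\dots,d\}$ of singleton indices is determined by $\Tjunc$ alone. Once a candidate associated cell $\cell$ is fixed, the membership $\Tjunc\in\Horth{\{i,j\}}(\cell)$ forces $\Tjunc_k = \cell_k$ for $k\notin\{i,j\}$, and the two remaining conditions $\Tjunc_i\subsetneq\cell_i$ (singleton in the open interior) versus $\Tjunc_j\subsetneq\partial\cell_j$ (singleton on the two-point boundary) are mutually exclusive, so they unambiguously assign $\odir=i$ and $\pdir=j$.

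For the uniqueness of $\ascell(\Tjunc)$ I would argue by contradiction. Suppose $\cell\neq\cell'$ are both associated with $\Tjunc$. The previous paragraph applied to both cells yields $\cell_k = \Tjunc_k = \cell'_k$ for $k\notin\{i,j\}$, so the disagreement is confined to the two-dimensional $(i,j)$-plane through any interior point of $\Tjunc$. In this plane $\Tjunc$ appears as a single point and, by the $\odir/\pdir$ split, each associated cell spans exactly two adjacent quadrants around it, namely the two on its own side of the $\pdir$-hyperface. A short enumeration of quadrant pairings combined with the disjointness of mesh cells narrows the possibilities for two distinct associated cells to a \emph{top+bottom} pair (both with $\pdir=j$, on opposite sides of $\Tjunc_j$) or its mirror \emph{left+right} pair (both with $\pdir=i$, on opposite sides of $\Tjunc_i$).

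The main obstacle is eliminating these two residual configurations, for which I would invoke the construction rule in \cref{alg: subdiv} together with the disjointness $\overline\inddomain = \bigcup_{l=0}^d\Hdim{l}$. In the top+bottom case, both cells have $\Tjunc_i$ strictly interior to their $i$-components, so neither has been bisected at $\Tjunc_i$ in direction~$i$. Inspecting \cref{alg: subdiv} then shows that the shared hyperface between $\cell$ and $\cell'$ at $\Tjunc_j$ cannot have been split along $\Tjunc_i$ by any bisection call that leaves both $\cell$ and $\cell'$ intact, so this hyperface is a single element of $\Hdim{d-1}$ whose relative interior contains $\Tjunc$, contradicting the disjointness of the partition. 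The left+right case follows after swapping $i$ and $j$. The delicate step, on which I expect the main difficulty, is this propagation argument: one must verify that, without bisecting $\cell$ or $\cell'$ themselves, no call of \cref{alg: subdiv} can introduce a cut at $\Tjunc_i$ on their shared hyperface, which rests on the algorithm's matching condition $\faceE_i = C_i$ for propagated subdivisions in direction~$i$.
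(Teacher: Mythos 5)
Your first two stages are essentially sound, with one correction: the definition of a \T-junction places $\Tjunc$ in the \emph{global} set $\Horth{\{i,j\}}$, not in $\Horth{\{i,j\}}(\cell)$ for the associated cell, so your claim $\Tjunc_k=\cell_k$ for $k\notin\{i,j\}$ is false in general --- for instance, in 3D an unrefined cell sitting atop four cells refined in directions $1$ and $2$ has a hanging edge $\Tjunc$ with $\Tjunc_2\subsetneq\cell_2$. This misstep is harmless, since your quadrant analysis only needs the local picture at a single point $x\in\Tjunc$ (each candidate cell contains the two quadrants on its own $\pdir$-side of $x$, so same-side and perpendicular pairings violate disjointness of cells), and the reduction to the \emph{top+bottom} and \emph{left+right} configurations survives. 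The genuine gap is the elimination of these residual configurations, which you rightly identify as the crux but do not actually prove. Two things are missing. First, the cited matching condition $\faceE_i=\cell_i$ is not the whole mechanism of \cref{alg: subdiv}: the region $D$ is enlarged beyond $\overline\cell$ whenever the bisected cell touches the frame region, so entities --- in particular hyperfaces between two cells that both remain intact --- can be split by a bisection call far away; excluding a cut at $\Tjunc_i$ from this mechanism requires an induction over the construction maintaining the structure of the frame region (cf.\ \cref{rem: explaining subdiv for frame region}), which your sketch does not supply. Second, even inside the active region the propagation claim needs the unstated observation that any cell whose bisection in direction $i$ could cut the shared interface at $\Tjunc_i$ must be an ancestor of $\cell$ or $\cell'$, and that such a bisection would leave $\Tjunc_i$ on the boundary of the $i$-component of every descendant, contradicting $\Tjunc_i\in\cell_i$. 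Also, ``a single element of $\Hdim{d-1}$ whose relative interior contains $\Tjunc$'' overstates the conclusion: cuts in directions $k\notin\{i,j\}$ may well subdivide the interface; the usable contradiction is only that with no cut at $x_i=\Tjunc_i$ the entity $\Tjunc$ cannot exist in the disjoint decomposition.

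The paper avoids this history induction almost entirely and argues locally. It takes from the construction only the existence of one \emph{corner} cell $\cell$ with $\Tjunc_i\subset\partial\cell_i$ and $\Tjunc_j\subset\partial\cell_j$, whose hyperfaces fill two \emph{adjacent} of the four slots around $\Tjunc$ in the $(i,j)$-plane; convexity of open boxes (the three-point/midpoint argument) shows that two adjacent slots can never both be empty, so at least three of the four neighboring $(d{-}1)$-faces exist; the valence bound in the \T-junction definition then forces exactly three, and the unique empty slot simultaneously determines $\pdir(\Tjunc)$, $\odir(\Tjunc)$ and $\ascell(\Tjunc)$ as the unique cell across that slot. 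Note that this is precisely where your top+bottom and left+right cases die in the paper's framework --- they would have two \emph{opposite} filled slots, incompatible with the corner cell's adjacent pair --- and that the paper obtains valence exactly $3$ as a by-product, whereas your argument never invokes the valence condition at all. If you wish to keep your route, the honest cost is a full propagation lemma covering the frame-region extension; the paper's corner-cell-plus-convexity argument buys the same exclusion at a much lower price.
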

\begin{proof}
 Consider any $(d{-}2)$-dimensional mesh entity $\Tjunc\in\Hdim{d-2}$ that is not contained in the boundary of $\inddomain$. Then $\Tjunc$ is of the form $\Tjunc=\Tjunc_1\times\dots\times\Tjunc_d$ and there exist exactly two indices $i,j\in\{1,\dots,d\}$ such that $\Tjunc_i$ and $\Tjunc_j$ are singletons and all other components $\Tjunc_k$, $i\ne k\ne j$, are open intervals. Since $\Tjunc$ is a mesh entity of a $d$-dimensional box mesh constructed via refinement of a tensor-product mesh as assumed above, there is by construction a (possibly non-unique) cell $\cell\in\Hdim d$ with $\cell=\cell_1\times\dots\times\cell_d$ and
 \begin{equation}
  \Tjunc_i\subset\partial\cell_i,  \quad
  \Tjunc_j\subset\partial\cell_j,
  \quad\text{and}\quad 
  \Tjunc_k\subseteq\cell_k \quad\text{for }i\ne k\ne j.
 \end{equation}
$\cell$ is bounded by $2{\cdot}d$ (or more, in case of \T-junctions in its boundary) hyperfaces, and for each $k\in\{1,\dots,d\}$ and $n_k\in\partial\cell_k=\{\inf\cell_k,\sup\cell_k\}$, there is a hyperface $\face\in\Hdim{d-1}$ with $\face=\face_1\times\dots\times\face_d$, $\face_k=\{n_k\}$ 
and $\face_\ell\subseteq\cell_\ell$ for $\ell\ne k$. 
In particular, there are two such hyperfaces $\face^{(i)},\face^{(j)}$ with $\face^{(i)}_i=\Tjunc_i$ and $\face^{(j)}_j=\Tjunc_j$.
$\face^{(i)}$ neighbors $\Tjunc$ in positive (resp.\ negative) $j$-th direction if $\Tjunc_j=\inf\face^{(i)}_j$
(resp.\ $\Tjunc_j=\sup\face^{(i)}_j$), and
$\face^{(j)}$ neighbors $\Tjunc$ in positive (resp.\ negative) $i$-th direction if $\Tjunc_i=\inf\face^{(j)}_i$
(resp.\ $\Tjunc_i=\sup\face^{(j)}_i$).
Together, $\Tjunc$ is neighbored by at least two $(d{-}1)$-dimensional interfaces in different directions.
We assume without loss of generality that $\Tjunc$ has neighbor interfaces in positive $i$-th and $j$-th direction, i.e. that $\Tjunc_j=\inf\face^{(i)}_j$ and $\Tjunc_i=\inf\face^{(j)}_i$.

Let $\meshsize=\min\bigl\{\sup\cell_k-\inf\cell_k\mid\cell=\cell_1\times\dots\times\cell_d\in\Hdim d,\enspace k\in\{1,\dots,d\}\bigr\}$ be the minimal mesh size.
If there is no neighbor interface in negative $i$-th direction, then for any point $x\in\Tjunc$ and $0<\varepsilon<\meshsize$, the point $x-\varepsilon e_i$ (with $e_i$ being the $i$-th unit vector) is in the interior of some cell $\tilde\cell\in\Hdim d$, as well as the points $x-\varepsilon e_i+\varepsilon e_j$ and  $x-\varepsilon e_i-\varepsilon e_j$, since there is no $j$-orthogonal hyperface separating them.

If similarly $\Tjunc$ has no neighbor interface in negative $j$-th direction, then the points $x-\varepsilon e_j+\varepsilon e_i$ and  $x-\varepsilon e_j-\varepsilon e_i$ are in the interior of $\tilde\cell$.

If $\Tjunc$ does not have neighbor interfaces neither in negative $i$-th nor in negative $j$-th direction, then
the three points $x^{(1)}=x-\varepsilon e_i-\varepsilon e_j$, $x^{(2)}=x-\varepsilon e_i+\varepsilon e_j$, $x^{(3)}=x-\varepsilon e_j+\varepsilon e_i$ are in $\tilde\cell$, but the midpoint $\tfrac12(x^{(2)}+x^{(3)})=x\notin\tilde\cell$ since $x\in\Tjunc\subset\partial\tilde\cell$ and $\tilde\cell$ is open. This means that $\tilde\cell$ is not convex in contradiction to the assumption that $\Hdim d$ consists of open axis-aligned (and hence convex) boxes.

Together, any $\Tjunc\in\Horth{\{i,j\}}$ is neighbored by at least three and at most four $(d{-}1)$-dimensional faces. Thus, all \T-junctions have valence 3. Let $j$ be the unique direction in which there is no neighbor interface, and let $s\in\{-1,1\}$ indicate whether there is no neighbor face in negative ($s{=}-1$) or positive ($s{=}1$) $j$-th direction. Then $\odir(\Tjunc)=i$, $\pdir(\Tjunc)=j$, and $\ascell(\Tjunc)$ is the unique neighbor cell containing the point $x+s\varepsilon e_j$ for any $x\in\Tjunc$.
\end{proof}

We give brief examples for $\odir(\Tjunc)$ and $\pdir(\Tjunc)$ for a hanging interface $\Tjunc$ in 2D and 3D, see also \cref{fig: t-junction examples} for related sketches.
For 2D, let $\Tjunc = \{ n \} \times \{ m \}$ be a hanging node, and assume that it is of type $\bot$ or $\top$. 
Then there is an associated cell $\ascell(\Tjunc) = \cell = \cell_1\times\cell_2$ such that the integer $n$ is in the interior of $\cell_1$ and $m$ is the upper or lower bound  of $\cell_2$, i.e.
\[   n\in \cell_1 \quad \text{and}\quad  m \in\{ \inf \cell_2 ,\sup\cell_2\},
\quad\text{or equivalently}\quad  
   \{n\}\subsetneq  \cell_1 \quad \text{and}\quad  m \subsetneq\partial\cell_2.\]
We hence have $\odir(\Tjunc) = 1$ and $\pdir(\Tjunc) = 2$. 
Similarly, for \T-junctions of type $\vdash$ or $\dashv$ we have $\odir(\Tjunc) = 2$ and $\pdir(\Tjunc) = 1$.


As a 3D example, consider a hanging edge of the type $\Tjunc = \{n\}\times (\underline{m}, \overline{m}) \times \{\ell\}$ with an associated cell $\ascell(\Tjunc) = \cell =\cell_1\times\cell_2\times\cell_3$ such that 
\[
  \ell \in\cell_3 \quad \text{and}\quad  n  \in\{\inf\cell_1, \sup\cell_1\},
\quad\text{or equivalently}\quad  
\{ \ell \} \subsetneq \cell_3 \quad \text{and}\quad  \{n\}  \subsetneq\partial\cell_1,
\]
which yields $\odir(\Tjunc)=3$ and $\pdir(\Tjunc) = 1$.
\begin{figure}[t!]
\centering 
\begin{tikzpicture}[scale=.9, baseline = 0]
 \draw (0,0) grid (3,2) (1.5,1)--++(0,1);
 \fill[LUH-blue] (1.5,1) circle (1.5pt) node[above right,outer sep=1pt, inner sep=1pt] {$\Tjunc$};
 \node at (1.5,.5) {$\cell$};
\end{tikzpicture}\hspace{.125\textwidth}
\begin{tikzpicture}[scale=1.1, baseline = -.2cm]
 \def\zf{-1.25}
 \draw (0,0,0)--++(1,0,0)--++(0,0,\zf)--++(0,1,0)--++(0,0,-\zf)--++(0,-1,0)--++(1,0,0)--++(0,0,\zf)--++(0,1,0)--++(0,0,-\zf)--++(-2,0,0)--++(0,0,\zf)--++(0,-1,0)--++(0,0,-\zf)--++(0,1,0) (2,0,0)--++(0,1,0) (0,0,\zf)--++(2,0,0) (0,1,\zf)--++(2,0,0) (0,0,\zf/2)--++(1,0,0)--++(0,1,0)--++(-1,0,0)--++(0,-1,0);
 \draw[ultra thick,LUH-blue] (1,0,\zf/2)--node[left,outer sep=0pt,inner sep=1pt]{$\Tjunc$}  ++(0,1,0);
 \node at (1.5,.5,\zf/2) {$\cell$};

      \def\xo{ -0.75 } \def\xl{ 0.5 }
      \def\yo{ -0.25 } \def\yl{ 0.5 }
      \def\zo{  0.25 } \def\zl{ 0.5 }
      \draw[thick, ->] (\xo, \yo, \zo) -- (\xo + \xl, \yo, \zo) node[right,outer sep=0pt,inner sep=1pt] {\small1}; 
      \draw[thick, ->] (\xo, \yo, \zo) -- (\xo, \yo + \yl, \zo)  node[above,outer sep=0pt,inner sep=1pt] {\small2}; 
      \draw[thick, ->] (\xo, \yo, \zo) -- (\xo, \yo, \zo - \zl) node[above right,outer sep=0pt,inner sep=1pt] {\small3};
 
 \end{tikzpicture}
\caption{Examples for T-junctions and associated cells in 2D (left) and 3D (right).}
\label{fig: t-junction examples}
\end{figure}
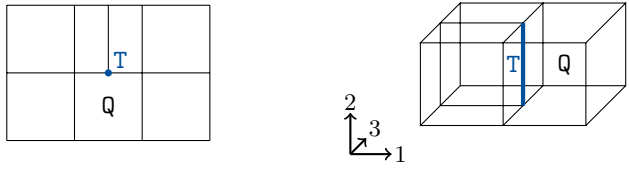

The above-defined properties of \T-junctions are essential for the analysis-suita\-bility described in \cref{sec: AS}. Each \T-junction is extended in its pointing direction and, for $d>2$, by a larger amount in all other directions except the orthogonal direction, and \T-junction extensions with different pointing/orthogonal direction are required to be disjoint. Details are given in \cref{sec: AS} below.
We end this section with a Lemma used for the proofs in \cref{sec: theorems}, using the notation $\conv Z$ for the convex hull of a set $Z$.

\begin{lemma} \label{prop: if x in Ski but not y then there is a T-junction}
 If two points $x,y$ are aligned in $i$-direction, and $x\in\skel_i\notni y$, then there is an $i$-orthogonal \T-junction and its associated cell between these points, i.e.
 \begin{multline}
  \Forall x,y\in\overline\inddomain, i\in\{1,\dots,d\}, x_i=y_i, x\in\skel_i\notni y \enspace 
  \Exists \Tjunc,\odir(\Tjunc)=i,\cell=\ascell(\Tjunc)\colon 
  \\
  \overline \Tjunc\cap\conv\{x,y\}\ne\nothing, \quad
  x_{\pdir(\Tjunc)}\ne y_{\pdir(\Tjunc)}, \quad
  \cell_{\pdir(\Tjunc)}\cap\conv\{x_{\pdir(\Tjunc)},y_{\pdir(\Tjunc)}\}\ne\nothing.
 \end{multline}
 Note that this implies $\Tjunc_i = \{x_i\} = \{y_i\}$.
\end{lemma}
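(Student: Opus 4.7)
The plan is to walk along the segment $\conv\{x,y\}$, locate the last point on it that lies in $\skel_i$, and read off an $i$-orthogonal T-junction from the local mesh structure at that point.

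Write $z(t)=(1-t)x+ty$ for $t\in[0,1]$ and $v:=y-x$. Since $x_i=y_i=:c$, the segment lies in the hyperplane $\{z_i=c\}$. Because $\skel_i$ is a finite union of closed boxes, the set $S:=\{t\in[0,1]:z(t)\in\skel_i\}$ is closed, so $t^*:=\sup S$ is attained; set $z^*:=z(t^*)$. Then $z^*\in\skel_i$, $t^*<1$, and $z(t)\notin\skel_i$ for all $t\in(t^*,1]$. Fix some $E\in\Horth{i}$ with $E_i=\{c\}$ and $z^*\in\overline E$, and let $K:=\{k\ne i:z^*_k\in\partial E_k\}$. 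If for each $k\in K$ the sign of $v_k$ kept $z(t)_k$ in $\overline{E_k}$ for $t$ just above $t^*$, then combined with $z(t)_i=c$ and $z(t)_k\in E_k$ for $k\notin K\cup\{i\}$ we would have $z(t)\in\overline E\subset\skel_i$ for all sufficiently small $t-t^*>0$, contradicting $t^*=\sup S$. So some $j\in K$ has $v_j$ pointing outward; without loss of generality $z^*_j=\sup E_j=:c'$ and $v_j>0$.

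Next I locate the cell on the $+e_j$ side. For every sufficiently small $\epsilon>0$, the point $z^*+\epsilon v$ has $i$-coordinate $c$ yet lies outside $\skel_i$, so it cannot sit on any $i$-orthogonal hyperface at level $c$; therefore any cell $\tilde\cell\in\Hdim d$ whose closure contains $z^*+\epsilon v$ satisfies $c\in\operatorname{int}(\tilde\cell_i)$. Because $\overline E\subset\skel_i$ separates $\tilde\cell$ from the halfspace $\{z_j<c'\}$ at $i$-level $c$ while $(z^*+\epsilon v)_j=c'+\epsilon v_j>c'$, the cell $\tilde\cell$ can only extend upward from $c'$ in the $j$-direction, giving $\inf\tilde\cell_j=c'$. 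The $(d{-}2)$-dimensional interface of $\tilde\cell$ at $\{z_i=c,\ z_j=c'\}$ is present in the mesh generated by \cref{alg: subdiv} as a mesh entity $\Tjunc\in\Horth{\{i,j\}}$ with $\Tjunc\subset\partial\tilde\cell$, $\Tjunc_i=\{c\}$, $\Tjunc_j=\{c'\}$, and $z^*\in\overline\Tjunc$. Since no $i$-orthogonal hyperface at level $c$ extends $\Tjunc$ in the $+e_j$ direction—otherwise $z(t^*{+}\epsilon)$ would still lie in $\skel_i$—the valence of $\Tjunc$ is at most three, so $\Tjunc$ is a T-junction; from $c\in\operatorname{int}(\tilde\cell_i)$ and $c'\in\partial\tilde\cell_j$ I read off $\odir(\Tjunc)=i$, $\pdir(\Tjunc)=j$, and $\ascell(\Tjunc)=\tilde\cell$.

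The three conclusions then follow at once: $z^*\in\overline\Tjunc\cap\conv\{x,y\}$; $x_j\ne y_j$ because $v_j>0$; and $c'=(1-t^*)x_j+t^*y_j$ lies in $\conv\{x_j,y_j\}$ strictly below $y_j$ (using $t^*<1$ and $v_j>0$), so the open interval $\tilde\cell_j=(c',\sup\tilde\cell_j)$ meets $\conv\{x_j,y_j\}$. The main obstacle is the mesh-theoretic step that $\Tjunc$ is an actual element of $\mesh$ rather than merely a geometric set of the right shape; a clean justification should proceed by induction on the number of bisections performed by \cref{alg: subdiv}, tracking how each bisection propagates through the codimension-$2$ interfaces on the boundary of the affected cells.
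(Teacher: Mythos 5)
Your overall strategy coincides with the paper's: parametrize the segment, take the last parameter $t^*$ at which it meets $\skel_i$ (the paper takes a jump location of the indicator of $\skel_i$, which is the same point), and extract an $i$-orthogonal \T-junction from the local mesh structure at $z^*$. The generic part of your argument is essentially the paper's Case ``$x^{(t^*)}\in\Tjunc$''.

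There is, however, a genuine gap in the step ``$\inf\tilde\cell_j=c'$''. You choose \emph{some} hyperface $E\in\Horth{i}$ with $z^*\in\overline E$ and \emph{some} exit direction $j\in K$, and then assert that $\overline E$ separates $\tilde\cell$ from $\{z_j<c'\}$ at level $z_i=c$. This separation is only valid when $j$ is the \emph{unique} exit direction and $z^*$ lies in the relative interior of the $(d{-}2)$-dimensional part of $\partial E$ at $z_j=c'$. If the segment leaves $\overline E$ through a corner (two or more exit directions $j,j'\in K$, which is possible for $d\ge3$ since the segment still has $d-1\ge 2$ degrees of freedom inside the hyperplane $\{z_i=c\}$), the cell $\tilde\cell$ containing $z^*+\epsilon v$ may wrap around $E$ in the $j'$-direction and satisfy $\inf\tilde\cell_j<c'$ while $\inf\tilde\cell_{j'}=c''$; the actual \T-junction at $z^*$ then has pointing direction $j'$, not your chosen $j$, and the entity you construct at $\{z_i=c,\,z_j=c'\}$ either is not a mesh entity or is not a \T-junction with associated cell $\tilde\cell$. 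The paper isolates exactly this degenerate situation as the case $x^{(t^*)}\in\partial\Tjunc$ and resolves it by perturbing $x$ and $y$ by $\tilde\varepsilon u$ so that the generic case applies, then letting $\tilde\varepsilon\to0$; your proof needs either that perturbation or an argument that selects the correct exit direction.

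The obstacle you flag at the end --- that $\Tjunc$ must be an actual mesh entity --- does not require an induction over bisections: the paper's standing assumptions in \cref{sec: highdim tjunctions} already state that $\mesh$ contains all lower-dimensional entities and that $\overline\inddomain=\bigcup\mesh$ is a \emph{disjoint} union, so $\partial\face$ for any $\face\in\Horth{i}$ is covered by closures of mesh entities of dimension $\le d-2$, and $z^*\in\partial\face$ immediately yields $z^*\in\overline\Tjunc$ for some $\Tjunc\in\Hdim{d-2}$. The real work is the degenerate case above, not the existence of $\Tjunc$.
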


\begin{proof} 
Define the function $f\colon[0,1]\to\{0,1\}$ with
 \begin{equation}
  f(t)=\begin{cases}
        1&\text{if }(1-t)x+ty\in\skel_i \\0&\text{otherwise.}
       \end{cases}
 \end{equation}
 Since $\skel_i$ is a finite union of closed sets, $\skel_i$ is closed as well. Consequently, the value of $f$  at jump locations is always 1.
 Since $f(0)=1$ and $f(1)=0$, there is at least one jump location $t^*\in(0,1)$ with $f(t^*)=1$ and $f(t^*+\varepsilon)=0$ for arbitrarily small $\varepsilon>0$.
 This means that $x^{(t^*)}=(1-t^*)x+t^*y\in\overline\face$ for some $i$-orthogonal face $\face\in\Horth i$, while  $x^{(t^*+\varepsilon)}$ is not in $\overline{\face'}$ for any $\face'\in\Horth i$. 
 
 Moreover, since $x^{(t^*+\varepsilon)}\in \conv\{x,y\}\subset\overline\inddomain$, we have $x^{(t^*+\varepsilon)}\in\overline\cell$ for some cell $\cell$ such that any vertex $v$ of $\cell$ satisfies $v_i\ne x_i$, since otherwise $\cell$ has an $i$-orthogonal hyperface in $\skel_i$ and $x^{(t^*+\varepsilon)}$ lies in $\skel_i$ in contradiction to $f(t^*+\varepsilon)=0$.
 Since $\overline\cell$ is closed and $x^{(t^*+\varepsilon)}\in\overline\cell$
 holds for arbitrarily small $\varepsilon$, we also have $x^{(t^*)}\in\overline\cell$. However, $f(t^*)=1$ tells us that also $x^{(t^*)}\in\overline{\cell'}$ holds for a different cell $\cell'$ that has the $i$-orthogonal hyperface $\face$ in its boundary.
 Hence $x^{(t^*)}\in\partial\cell$.
 The fact that $x^{(t^*)}\in\overline\face$ but $x^{(t^*+\varepsilon)}\notin\overline\face$ means that $x^{(t^*)}\in\partial\face$ and hence that $x^{(t^*)}\in\overline\Tjunc\subset\partial\face$ for some $i$-orthogonal entity $\Tjunc\in\Hdim{d-2}$. 
 
 If $x^{(t^*)}\in\Tjunc\in\Horth{i,j}\subset\Hdim{d-2}$, 
 then 
 $\Tjunc,\face,\cell$ are unique, $\Tjunc\subset\partial\cell$ and $\Tjunc$ is a \T-junction with $\cell=\ascell(\Tjunc)$ since it is a $(d{-}2)$-dimensional entity in the boundary of $\cell$ without being connected to any of its vertices. 
 From $x^{(t^*+\varepsilon)}\notin\Tjunc$ we conclude that $x^{(t^*)}$ and $x^{(t^*+\varepsilon)}$ differ in the 
 $i$-th or $j$-th component. From $x_i=y_i$ we get $x^{(t^*)}_i=x^{(t^*+\varepsilon)}_i$ and hence $x^{(t^*)}_j\ne x^{(t^*+\varepsilon)}_j$ with $\pdir(\Tjunc)=j$, which yields $x_j\ne y_j$.
 Moreover, $x^{(t^*+\varepsilon)}\in\cell$ yields $x^{(t^*+\varepsilon)}_j\in\cell_j$, and $x^{(t^*+\varepsilon)}\in\conv\{x,y\}$ yields $x^{(t^*+\varepsilon)}_j\in\conv\{x_j,y_j\}$ where $\conv\{x_j,y_j\}$ is $[x_j,y_j]$ or $[y_j,x_j]$. We thus have $\cell_j\cap \conv\{x_j,y_j\}\ne\nothing$.
 
 If otherwise $x^{(t^*)}\in\partial\Tjunc$, then  we consider a perturbation $u\in\R^d$ such that the same construction with $\tilde x=x+\tilde\varepsilon u$ and $\tilde y=y+\tilde\varepsilon u$, for any sufficiently small $\tilde\varepsilon>0$, yields $\tilde x^{(t^*)}\in\Tjunc$ for some $\Tjunc\in\Hdim{d-2}$ with $x^{(t^*)}\in\partial\Tjunc$. The claim follows for any $\tilde\varepsilon>0$ and remains true for $\tilde\varepsilon\to0$. 
\end{proof}

%
%
\section{Multivariate T-splines}\label{sec: highdim tsplines}
This section explains the construction of multivariate \T-splines, following the construction in
\cite{VeigaBuffaEtAl14}.
\begin{definition}[admissible meshes]\label{nd_tsplines::def::admissible}
We define for $k=1,\dots,d$ and $n=0,\dots,N_k$ the slice
\begin{equation}
\slice_k(n) \coloneqq \bigtimes_{j=1}^{k-1}[0,N_j] \times\{n\}\times \mspace{-9mu} \bigtimes_{j=k+1}^d[0,N_j]
= \bigl\{ (x_1,\dots,x_d)\in\overline\inddomain\mid x_k=n\bigr\},
\end{equation}
and the $k$-th frame region 
\begin{equation}
\FR^{(k)} \coloneqq \bigl\{x\in\overline\inddomain\mid x_k\in\bigl[0,\lfloor \tfrac{p_k + 1}{2}\rfloor\bigr] \cup \bigl[N_k-\lfloor \tfrac{p_k + 1}{2}\rfloor,N_k\bigr]\bigr\}.
\end{equation}
A T-mesh $\mesh$ is called \emph{admissible} if for $k=1,\dots,d$,
there is no \T-junction $\Tjunc$ with $\odir(\Tjunc)=k$ or $\pdir(\Tjunc)=k$ in the $k$-th frame region, and 
\begin{equation}
\slice_k(n)\subseteq\skel_k \quad \text{for}\,\,
n = 0 \,,\dots,\bigl\lfloor \tfrac{p_k + 1}{2}\bigr\rfloor \text{ and }
n = N_k - \bigl\lfloor \tfrac{p_k + 1}{2}\bigr\rfloor,\dots,N_k. \label{eq:admissible}
\end{equation}
\end{definition}
\begin{remark}\label{rem: explaining subdiv for frame region}
 \Cref{alg: subdiv} preserves admissibility in the above sense. When subdividing a cell that touches the $k$-th frame region, \T-junctions with pointing direction $k$ are avoided by extending the refinement to the domain boundary. Further, since only cells in the active region can be subdivided, no $k$-orthogonal \T-junction can be created in the $k$-th frame region.
\end{remark}

For the definition of anchors and knot vectors, we follow the ideas of \cite{VeigaBuffaEtAl14}.
Anchors are defined as a certain type of mesh entities, e.g.\ edges or faces in a certain direction, and the knot vectors and sets are constructed by ray tracing these entities along the mesh. Using the above introduced sets $\Horth\kappa$, the anchors can be generalized to arbitrary dimensions.
\begin{definition}[anchors]\label{def:anchors}
Let $\p=(p_1,\dots,p_d)$ be the vector of polynomial degrees of the \T-splines. 
The set of anchors is then defined by 
\begin{equation}
\label{nd_tsplines::eq::anchor_def}
  \anchors \coloneqq \{\anchor \in \Horth\kappa \mid \anchor\subset \AR\}\with \kappa=\{ \ell\in\{1,\dots,d\}\mid p_\ell \text{ odd } \}.
\end{equation}
\end{definition}

Similar to the literature \cite{LiEtAl12,VeigaBuffaEtAl13,VeigaBuffaEtAl14}, we assign to each anchor a knot vector in each axis direction. This is achieved by fixing the anchor's $j$-th component to an index $n$ and checking for which indices $n$ the result is part of the skeleton. 
\begin{definition}[global and local knot vectors]\label{df: knot vectors and vectors}
For any mesh entity  $\faceE=\faceE_1\times \dots\times \faceE_d$ and $j\in\{1,\dots,d\}$, we define the
projection $P_{j,n}(\faceE)=\faceE_1\times \dots\times \faceE_{j-1} \times \{ n \} \times \faceE_{j+1} \times \dots \times \faceE_d$ of $\faceE$ on the slice $\slice_j(n)$, and the
\emph{global knot vector} 
\begin{align}
\globind[\faceE]j &\coloneqq \Bigl(n \in\N \mid P_{j,n}(\faceE)\subset \skel_j\Bigr)
\end{align}
with entries in non-decreasing order.
The \emph{local knot vector} $\locind j$ for an anchor $\anchor = \anchor_1\times \dots \times \anchor_d$ is given by the $p_j+2$ consecutive indices $\ell_0,\dots,\ell_{p_j+1}\in\globind j$, such that  $\ell_k = \inf \anchor_j$ for $k = \lfloor \tfrac{p_j+1}{2}\rfloor$. This is, if $p_j$ is odd, the singleton $\anchor_j$ contains the middle entry of $\locind j$, and if $p_j$ is even, the two middle entries of $\locind j$ are the boundary values of $\anchor_j$.
\end{definition}
Note that we treat global and local knot vectors as ordered sets in the sense that $n\in\locind j$ means that $\locind j$ has a component equal to $n$.
As a consequence of \cref{nd_tsplines::def::admissible}, any global knot vector $\globind[\faceE]j$ in an admissible mesh contains the values
$n = 0 \,,\dots,\lfloor \frac{p_j + 1}{2}\rfloor \text{ and }
n = N_j - \lfloor \frac{p_j + 1}{2}\rfloor,\dots,N_j$.

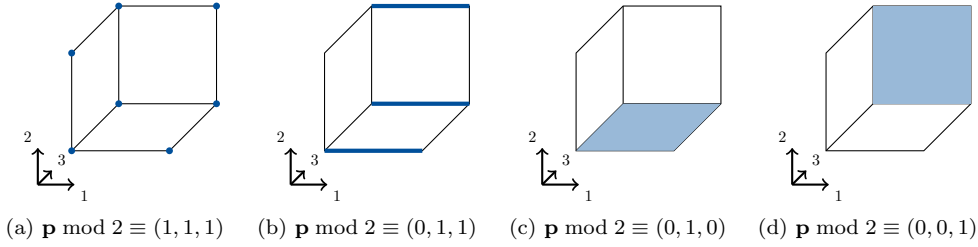
\begin{figure}[t!]
\subfloat[$\p\bmod2 \equiv (1, 1, 1)$]{ \label{fig:: examples on anchor elements a}
    \begin{tikzpicture}[scale=1.3,  every node/.style={scale=0.6}]    
      \def\zf{-1.25}
      \draw (0, 0, 0) -- (0, 1, 0) -- (0, 1, \zf) -- (1, 1, \zf) -- (1, 0, \zf) -- (1, 0, 0) -- cycle;
      \draw (0, 0, \zf) -- (0, 0, 0);
      \draw (0, 0, \zf) -- (0, 1, \zf);
      \draw (0, 0, \zf) -- (1, 0, \zf);
      
      \fill[LUH-blue] (0, 0, 0) circle (1pt);
      \fill[LUH-blue] (1, 0, 0) circle (1pt);
      
      \fill[LUH-blue] (0, 0, \zf) circle (1pt);
      \fill[LUH-blue] (1, 0, \zf) circle (1pt);
      
      \fill[LUH-blue] (0, 1, 0) circle (1pt);
      \fill[LUH-blue] (0, 1, \zf) circle (1pt);
      
      \fill[LUH-blue] (1, 1, \zf) circle (1pt);
      
      \def\xo{ -0.25 } \def\xl{ 0.375 }
      \def\yo{ -0.25 } \def\yl{ 0.375 }
      \def\zo{  0.25 } \def\zl{ 0.375 }
      \draw[thick, ->] (\xo, \yo, \zo) -- (\xo + \xl, \yo, \zo) node[below right] {${1}$}; 
      \draw[thick, ->] (\xo, \yo, \zo) -- (\xo, \yo + \yl, \zo)  node[above left ] {${2}$}; 
      \draw[thick, ->] (\xo, \yo, \zo) -- (\xo, \yo, \zo - \zl) node[above right ] {${3}$}; 
    \end{tikzpicture}
}\hfill
\subfloat[$\p\bmod2 \equiv (0, 1, 1)$]{ \label{fig:: examples on anchor elements b}
    \begin{tikzpicture}[scale=1.3,  every node/.style={scale=0.6}]  
      \def\zf{-1.25}
      \draw (0, 0, 0) -- (0, 1, 0) -- (0, 1, \zf) -- (1, 1, \zf) -- (1, 0, \zf) -- (1, 0, 0) -- cycle;
      \draw (0, 0, \zf) -- (0, 0, 0);
      \draw (0, 0, \zf) -- (0, 1, \zf);
      \draw (0, 0, \zf) -- (1, 0, \zf);
      
     \draw[ultra thick, LUH-blue] (0, 0, 0) -- (1, 0, 0); 
     \draw[ultra thick, LUH-blue] (0, 0, \zf) -- (1, 0, \zf); 
     \draw[ultra thick, LUH-blue] (0, 1, \zf) -- (1, 1, \zf); 
      
      \def\xo{ -0.25 } \def\xl{ 0.375 }
      \def\yo{ -0.25 } \def\yl{ 0.375 }
      \def\zo{  0.25 } \def\zl{ 0.375 }
      \draw[thick, ->] (\xo, \yo, \zo) -- (\xo + \xl, \yo, \zo) node[below right] {${1}$}; 
      \draw[thick, ->] (\xo, \yo, \zo) -- (\xo, \yo + \yl, \zo)  node[above left ] {${2}$}; 
      \draw[thick, ->] (\xo, \yo, \zo) -- (\xo, \yo, \zo - \zl) node[above right ] {${3}$};
    \end{tikzpicture}
}\hfill
\subfloat[$\p\bmod2\equiv (0, 1, 0)$]{ \label{fig:: examples on anchor elements c}
    \begin{tikzpicture}[scale=1.3,  every node/.style={scale=0.6}]  
      \def\zf{-1.25}
      \draw (0, 0, 0) -- (0, 1, 0) -- (0, 1, \zf) -- (1, 1, \zf) -- (1, 0, \zf) -- (1, 0, 0) -- cycle;
      \draw (0, 0, \zf) -- (0, 0, 0);
      \draw (0, 0, \zf) -- (0, 1, \zf);
      \draw (0, 0, \zf) -- (1, 0, \zf);

      \fill[LUH-lblue] (0, 0, 0) -- (1, 0, 0) -- (1, 0, \zf) -- (0, 0, \zf) -- cycle;
      
      \def\xo{ -0.25 } \def\xl{ 0.375 }
      \def\yo{ -0.25 } \def\yl{ 0.375 }
      \def\zo{  0.25 } \def\zl{ 0.375 }
      \draw[thick, ->] (\xo, \yo, \zo) -- (\xo + \xl, \yo, \zo) node[below right] {${1}$}; 
      \draw[thick, ->] (\xo, \yo, \zo) -- (\xo, \yo + \yl, \zo)  node[above left ] {${2}$}; 
      \draw[thick, ->] (\xo, \yo, \zo) -- (\xo, \yo, \zo - \zl) node[above right ] {${3}$};
    \end{tikzpicture}
} \hfill
\subfloat[$\p\bmod2\equiv (0, 0, 1)$]{ \label{fig:: examples on anchor elements d}
    \begin{tikzpicture}[scale=1.3,  every node/.style={scale=0.6}]  
      \def\zf{-1.25}
      \draw (0, 0, 0) -- (0, 1, 0) -- (0, 1, \zf) -- (1, 1, \zf) -- (1, 0, \zf) -- (1, 0, 0) -- cycle;
      \draw (0, 0, \zf) -- (0, 0, 0);
      \draw (0, 0, \zf) -- (0, 1, \zf);
      \draw (0, 0, \zf) -- (1, 0, \zf);
      
      \fill[LUH-lblue] (0, 0, \zf) -- (1, 0, \zf) -- (1, 1, \zf) -- (0, 1, \zf) -- cycle;
      
      \def\xo{ -0.25 } \def\xl{ 0.375 }
      \def\yo{ -0.25 } \def\yl{ 0.375 }
      \def\zo{  0.25 } \def\zl{ 0.375 }
      \draw[thick, ->] (\xo, \yo, \zo) -- (\xo + \xl, \yo, \zo) node[below right] {${1}$}; 
      \draw[thick, ->] (\xo, \yo, \zo) -- (\xo, \yo + \yl, \zo)  node[above left ] {${2}$}; 
      \draw[thick, ->] (\xo, \yo, \zo) -- (\xo, \yo, \zo - \zl) node[above right ] {${3}$};
    \end{tikzpicture}
}
\caption{Different anchor types on a cell in $\R^3$ for various degrees $\p$. Since the specific degree of $\p$ is not of interest for the anchor elements, we just consider different parities of $\p$.}
\label{fig:: examples on anchor elements}
\end{figure}

An example of different anchor elements for 3D is given in \cref{fig:: examples on anchor elements}. Each example illustrates the anchor entities of a cell in the active region of the mesh. 
Note that $\Horth{\kappa}$ determines the anchor type, where each direction in $\kappa$ is fixed to singletons. 
In \cref{fig:: examples on anchor elements a}, the polynomial degree is odd in every direction, hence, we get $\kappa = \{ 1, 2, 3\}$ and $\Horth{\kappa}$ corresponds to the vertices of the mesh inside the active region. 
In Figure \ref{fig:: examples on anchor elements b} the polynomial degrees in the second and third coordinate are odd. 
It follows, $\kappa = \{ 2, 3 \}$, from which we infer $\Horth{\kappa}$ as the entities with singletons in its second and third direction, i.e.\ lines along the $x$-axis. 
In \cref{fig:: examples on anchor elements c}, resp. \ref{fig:: examples on anchor elements d}, we have $\kappa = \{ 3 \}$, resp. $\kappa = \{ 2 \}$, hence the set $\Horth\kappa$ are faces with singletons in direction 3, resp. 2. 

\Cref{nd_tsplines::fig::knot vector construction} shows two examples for the construction of local knot vectors in 3D.
In each example, we show for two anchors the construction of one local knot vector.
The anchors are faces, and the local knot vector in direction 1 is constructed for the anchors highlighted in light blue.
By tracing the anchor along the first direction, we highlight the projections that lie in the skeleton. 

\cref{nd_tsplines::fig::knot vector construction 1} considers the case $\p\bmod2=(1,0,0)$, i.e.\@ 
anchors consist of singletons in their first coordinate, $\anchor =\{ \bar m \} \times (n_1, n_2) \times (l_1, l_2)$. We collect the global knot vector of each anchor by tracing it along direction 1 and including the indices $m$ for which
$P_{1,m}(\anchor)$ is in the skeleton of the mesh, i.e.\@ we check for each $m$ if $\{ m \} \times (n_1, n_2) \times (l_1, l_2)\subset \skel_1$ and include $m$ in $\globind[\anchor]1$ if this is the case. We then pick the consecutive $p_1 + 2$ indices from $\globind 1$ as the local knot vector $\locind 1$. 
For the anchor $\anchorone$ at the top of \cref{nd_tsplines::fig::knot vector construction 1}, we get $\locind[\anchorone]1 = \oset{\bar m-2,\bar m -1, \bar m, \bar m + 1, \bar m + 2}$, and for the anchor $\anchortwo$ at the bottom, we get $\locind[\anchortwo]1 = \oset{\bar m-2,\bar m -1, \bar m, \bar m + 2, \bar m + 3}$.

In \cref{nd_tsplines::fig::knot vector construction 2} we consider anchors with singletons in their second coordinate, i.e.\@ $\anchor = (m_1, m_2) \times \{\bar n\} \times (\ell_1,\ell_2)$. Fixing the first coordinate to some index $m$, we test $\{ m \} \times \{\bar n\} \times (\ell_1,\ell_2) \subset \skel_1$. For the anchor at the top, we then get $\locind[\anchorone]1 = \oset{m_1 - 2, m_1 - 1, m_1, m_2, m_2 + 1, m_2 + 2}$ and for the anchor at the bottom $\locind[\anchortwo]1 = \oset{m_1 - 2, m_1 - 1, m_1, m_2 + 1, m_2 + 2, m_2 + 3}$.

\begin{figure}[t!]\center
\subfloat[{Example for $\p = (3, 2, 2)$. The illustrated local knot vectors are 
\newline$\locind[\anchorone]1 = \oset{\bar m-2,\bar m -1, \bar m, \bar m + 1, \bar m + 2}$ and $\locind[\anchortwo]1=\oset{\bar m-2,\bar m -1, \bar m, \bar m + 2, \bar m + 3}$.}]{\label{nd_tsplines::fig::knot vector construction 1}
\begin{tikzpicture}[yscale=0.8, xscale=1.4,  every node/.style={scale=0.8}]
\def \zf {-1.5}
\pgfmathsetmacro{\y}{2}
  \foreach \x in {0,...,6}{
    \draw (\x,\y,0) rectangle +(1,2,0)
    (\x,\y,\zf) rectangle +(1,2,0)
    (\x, \y, 0) --++(0,0,\zf)
    (\x, 2+\y, 0) --++(0,0,\zf)
    (\x + 1, \y, 0) --++(0,0,\zf)
    (\x + 1, 2+\y, 0) --++(0,0,\zf);
  }
\pgfmathsetmacro{\y}{0}
  \foreach \x in {0,1,2}{
    \draw (\x,\y,0) rectangle +(1,2,0)
    (\x,\y,\zf) rectangle +(1,2,0)
    (\x, \y, 0) --++(0,0,\zf)
    (\x, 2+\y, 0) --++(0,0,\zf)
    (\x + 1, \y, 0) --++(0,0,\zf)
    (\x + 1, 2+\y, 0) --++(0,0,\zf);
  }
  \foreach \x in {3,5}{
    \draw (\x,\y,0) rectangle +(2,2,0)
    (\x,\y,\zf) rectangle +(2,2,0)
    (\x, \y, 0) --++(0,0,\zf)
    (\x, 2+\y, 0) --++(0,0,\zf)
    (\x + 2, \y, 0) --++(0,0,\zf)
    (\x + 2, 2+\y, 0) --++(0,0,\zf);
  }
\draw (5,1,0)--(7,1,0)--(7,1,\zf)--(5,1,\zf)--cycle
(6,1,0)--(6,1,\zf)--(6,2,\zf)--(6,2,0)--cycle;
\draw (0, 0, \zf/2) -- (1, 0, \zf/2) -- (1, 2, \zf/2) -- (0, 2, \zf/2) -- cycle; 
\draw (0, 2, \zf/2) -- (1, 2, \zf/2) -- (1, 4, \zf/2) -- (0, 4, \zf/2) -- cycle; 

\node[below] at (1,-.1,0) {$\bar m-2$};
\node[below] at (2,-.1,0) {$\bar m-1$};
\node[below] at (3,-.1,0) {$\phantom1\bar m\phantom1$};
\node[below] at (4,-.1,0) {$\bar m+1$};
\node[below] at (5,-.1,0) {$\bar m+2$};
\node[below] at (6,-.1,0) {$\bar m+3$};
\node[below] at (7,-.1,0) {$\bar m+4$};

\fill[LUH-blue, opacity = 0.2] (3, 2, 0) -- (3, 2, \zf) -- (3, 4, \zf) -- (3, 4, 0) -- cycle;
\fill[LUH-blue, opacity = 0.2] (3, 0, 0) -- (3, 0, \zf) -- (3, 2, \zf) -- (3, 2, 0) -- cycle;

\foreach \x in {1,2,4,5}{
  \fill[LUH-red, opacity = 0.2] (\x, 2, 0) -- (\x, 2, \zf) -- (\x, 4, \zf) -- (\x, 4, 0) -- cycle;
}
\draw[LUH-lred, thick, dashed] (1, 3, \zf/2) -- (5, 3, \zf/2);
  \node at (3, 3, \zf/2) {$\anchorone$};

\foreach \x in {1,2,5,7}{
  \fill[LUH-red, opacity = 0.2] (\x, 0, 0) -- (\x, 0, \zf) -- (\x, 2, \zf) -- (\x, 2, 0) -- cycle;
}
\draw[LUH-lred, thick, dashed] (1, 1, \zf/2) -- (7, 1, \zf/2);
  \node at (3, 1, \zf/2) {$\anchortwo$};

      \def\xo{ -0.25 } \def\xl{ 0.5 }
      \def\yo{ -0.25 } \def\yl{ 0.5 }
      \def\zo{  1 } \def\zl{ 0.75 }
      \draw[thick, ->] (\xo, \yo, \zo) -- (\xo + \xl, \yo, \zo) node[below right] {${1}$}; 
      \draw[thick, ->] (\xo, \yo, \zo) -- (\xo, \yo + \yl, \zo)  node[above left ] {${2}$}; 
      \draw[thick, ->] (\xo, \yo, \zo) -- (\xo, \yo, \zo - \zl) node[above right ] {${3}$};
\end{tikzpicture}
}

\subfloat[{Example for $\p = (4, 2, 3)$. The illustrated local knot vectors are 
\newline$\locind[\anchorone]1 = \oset{m_1 - 2, m_1 - 1, m_1, m_2, m_2 + 1, m_2 + 2}$ and 
\newline$\locind[\anchortwo]1 = \oset{m_1 - 2, m_1 - 1, m_1, m_2 + 1, m_2 + 2, m_2 + 3}.$}]{\label{nd_tsplines::fig::knot vector construction 2}
\begin{tikzpicture} [yscale=0.8, xscale=1.4,  every node/.style={scale=0.8}]
\def \zf {-1.5}
\def \zf {-1.5}
\pgfmathsetmacro{\y}{2}
  \foreach \x in {0,...,6}{
    \draw (\x, \y, 0) -- (\x + 1, \y, 0) -- (\x + 1, \y + 2, 0) -- (\x, \y + 2, 0) -- cycle;
    \draw (\x, \y, \zf) -- (\x + 1, \y, \zf) -- (\x + 1, \y + 2, \zf) -- (\x, \y + 2, \zf) -- cycle;
  
    \draw (\x, \y, 0) -- (\x , \y, \zf);
    \draw (\x, 2+\y, 0) -- (\x , 2+\y, \zf);
    \draw (\x + 1, \y, 0) -- (\x + 1 , \y, \zf);
    \draw (\x + 1, 2+\y, 0) -- (\x + 1 , 2+\y, \zf);
  }
\pgfmathsetmacro{\y}{0}
  \foreach \x in {0,1,2,5,6}{
    \draw (\x, \y, 0) -- (\x + 1, \y, 0) -- (\x + 1, \y + 2, 0) -- (\x, \y + 2, 0) -- cycle;
    \draw (\x, \y, \zf) -- (\x + 1, \y, \zf) -- (\x + 1, \y + 2, \zf) -- (\x, \y + 2, \zf) -- cycle;
  
    \draw (\x, \y, 0) -- (\x , \y, \zf);
    \draw (\x, 2+\y, 0) -- (\x , 2+\y, \zf);
    \draw (\x + 1, \y, 0) -- (\x + 1 , \y, \zf);
    \draw (\x + 1, 2+\y, 0) -- (\x + 1 , 2+\y, \zf);
  }
\draw (3,0,0)--(5,0,0) (3,0,\zf)--(5,0,\zf);

\draw (0, 0, \zf/2) -- (1, 0, \zf/2) -- (1, 2, \zf/2) -- (0, 2, \zf/2) -- cycle; 
\draw (0, 2, \zf/2) -- (1, 2, \zf/2) -- (1, 4, \zf/2) -- (0, 4, \zf/2) -- cycle; 

\node[below] at (1,-.1,0) {$m_1-2$};
\node[below] at (2,-.1,0) {$m_1-1$};
\node[below] at (3,-.1,0) {$\phantom1m_1\phantom1$};
\node[below] at (4,-.1,0) {$\phantom1m_2\phantom1$};
\node[below] at (5,-.1,0) {$m_2+1$};
\node[below] at (6,-.1,0) {$m_2+2$};
\node[below] at (7,-.1,0) {$m_2+3$};

\fill[LUH-blue, opacity = 0.2] (3, 2, 0) -- (4, 2, 0) -- (4, 2, \zf) -- (3, 2, \zf) -- cycle;
\fill[LUH-blue, opacity = 0.2] (3, 0, 0) -- (5, 0, 0) -- (5, 0, \zf) -- (3, 0, \zf) -- cycle;

\foreach \x in {1,...,6}{
  \coordinate (A) at (\x, 2, 0); 
  \coordinate (B) at (\x, 2, \zf);
  \begin{scope}[shift={(A)}, x={(B)}, y={($(A)!1!90:(B)$)}]
    \fill[LUH-red, opacity = 0.2] (.5, 0) ellipse (.5 and .125);
  \end{scope}
}
\draw[LUH-lred, thick, dashed] (1, 2, \zf/2) -- (6, 2, \zf/2);
\node at (3.5,2,\zf/2) {$\anchorone$};

\foreach \x in {1,2,3,5,6,7}{
  \coordinate (A) at (\x, 0, 0); 
  \coordinate (B) at (\x, 0, \zf);
  \begin{scope}[shift={(A)}, x={(B)}, y={($(A)!1!90:(B)$)}]
    \fill[LUH-red, opacity = 0.2] (.5, 0) ellipse (.5 and .125);
  \end{scope}
}
\draw[LUH-lred, thick, dashed] (1, 0, \zf/2) -- (7, 0, \zf/2);
\node at (4,0,\zf/2) {$\anchortwo$};

      \def\xo{ -0.25 } \def\xl{ 0.5 }
      \def\yo{ -0.25 } \def\yl{ 0.5 }
      \def\zo{  1 } \def\zl{ 0.75 }
      \draw[thick, ->] (\xo, \yo, \zo) -- (\xo + \xl, \yo, \zo) node[below right] {${1}$}; 
      \draw[thick, ->] (\xo, \yo, \zo) -- (\xo, \yo + \yl, \zo)  node[above left ] {${2}$}; 
      \draw[thick, ->] (\xo, \yo, \zo) -- (\xo, \yo, \zo - \zl) node[above right ] {${3}$};
\end{tikzpicture}
}
\caption{Construction of $\locind[\anchor]1$ for the given anchors marked in light blue for various degrees $\p$.}
\label{nd_tsplines::fig::knot vector construction}
\end{figure}

\begin{definition}[\T-spline]
For $p_j\in\N$, we denote by $B_{\locind j}\colon \pardomain \to \R$ the univariate B-spline function of degree $p_j$ that is returned by the Cox-deBoor recursion with knot vector $\xi_{\locind j}=\oset{ \xi_{\ell_0}^{(j)},\dots,\xi_{\ell_{p_j+1}}^{(j)} }$. We assume that $\xi_{\ell_0}^{(j)}<\xi_{\ell_{p_j+1}}^{(j)}$ is always fulfilled.
The \T-spline function associated with the anchor $\anchor$ is defined as 
\begin{equation}
	B_\anchor(\zeta_1,\dots,\zeta_d) \coloneqq \prod_{j=1}^d B_{\locind j}(\zeta_j),\label{eq:T-spline}
	\quad\text{for }(\zeta_1,\dots,\zeta_d)\in\pardomain,
\end{equation}
and the corresponding \T-spline space is given by $\splines_{\mesh,\p}(\pardomain) = \spn\lbrace  B_\anchor\mid \anchor\in\anchors \rbrace$. The index support of $B_\anchor$ will be denoted by $\suppindBA = \bigtimes_{k=1}^d \conv\locind k$, where $\conv \locind k=\conv\oset{\ell_0,\dots,\ell_{p_k+1}}=[\ell_0,\ell_{p_k+1}]$ is the closed interval from the first to the last entry of $\locind k$.
\end{definition}

\section{Analysis-Suitability}\label{sec: AS}
We introduce below two versions of ana\-lysis-suita\-bility. As shown in \cref{sec: theorems}, both are sufficient criteria for the linear independence of the T-splines associated with the considered mesh, and we conjecture that the geometric version can be weakened, see \cref{thm: wgas implies wdc}.

\begin{definition}[Abstract \T-junction extensions and analysis-suitability]\label{def:AAS}
 We define for all $j=1,\dots,d$ and $n=0,\dots,N_j$ the \emph{abstract \T-junction extension}
\begin{equation}\label{eq: def ATJ}
 \ATJ_j(n) = \slice_j(n)\cap\bigcup_{\substack{\anchor\in\anchors\\n\in\globind j}}\suppindBA\cap\bigcup_{\substack{\anchor\in\anchors\\n\notin\globind j}}\suppindBA 
\end{equation}
We call the mesh $\mesh$ \emph{abstractly analysis-suitable ($\AAS$)} if the abstract \T-junction extensions do not intersect in different directions,
i.e.\ if $\ATJ_i(n)\cap \ATJ_j(m)=\nothing$ for any $i\ne j$ and $n\in\{0,\dots,N_i\}$, $m\in\{0,\dots,N_j\}$, and we write $\mesh\in\AAS$.
\end{definition}
We will use the notation $\ATJ_i \equiv \ATJ_i(\mesh)$ to refer to the set of all $i$-orthogonal abstract \T-junction extensions within the mesh $\mesh$, i.e.
\begin{equation}
  \ATJ_i = \bigcup_{n=0}^{N_i} \ATJ_i(n),
\end{equation}
in which case a mesh is $\AAS$ if $\ATJ_i\cap\ATJ_j=\nothing$ for $i\neq j$. 
Note also that if $n\notin\conv\locind j$, then $\slice_j(n)\cap\suppindBA=\nothing$ and $\anchor$ does not contribute to the right-hand side in \cref{eq: def ATJ}.
Using the notation 
$P_{i,n}(\faceE)=\faceE_1\times\dots\times\faceE_{i-1}\times\{n\}\times\faceE_{i+1}\times\dots\times\faceE_d$
as in \cref{df: knot vectors and vectors},
the above-defined abstract \T-junction extensions are also neighborhoods of \T-junctions in the following sense.
\begin{proposition}\label{prop: each ATJ has a Tjunction}
For any point $x$ in a non-empty abstract \T-junction extension $\ATJ_i(n)$, there is an anchor $\anchor \in \anchors$ with $x\in\suppindBA$. Further, there is an $i$-orthogonal \T-junction $\Tjunc$ and its associated cell $\cell = \ascell(\Tjunc)$ between $x$ and $P_{i,n}(\anchor)$, i.e. 
\begin{enumerate}
    \item the \T-junction $\Tjunc$ intersects the convex hull of $P_{i, n}(\anchor)$ and $\{x\}$, i.e.
        \begin{equation}
            \overline\Tjunc\cap\conv\bigl(P_{i,n}(\anchor)\cup\{x\}\bigr)\ne\nothing ,
        \end{equation}
    \item in pointing direction of $\Tjunc$, the associated cell intersects the convex hull of $\anchor_{\pdir(\Tjunc)}$ and $\{ x_{\pdir(\Tjunc)} \}$, i.e. 
        \begin{equation}
            \cell_{\pdir(\Tjunc)}\cap\conv(\anchor_{\pdir(\Tjunc)}\cup\{x_{\pdir(\Tjunc)}\})\ne\nothing ,
        \end{equation}
    \item there exists a number $y\in \anchor_{\pdir(\Tjunc)}$ with $y \neq x_{\pdir(\Tjunc)}$.
\end{enumerate}

\end{proposition}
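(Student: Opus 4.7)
The plan is to apply the slice-crossing lemma (\cref{prop: if x in Ski but not y then there is a T-junction}) to $x$ and a carefully chosen companion point in $\slice_i(n)$, after a case split on whether $x$ itself lies in $\skel_i$. From the definition of $\ATJ_i(n)$ as a triple intersection, I extract two anchors $\anchor^{(1)},\anchor^{(2)}\in\anchors$ with $x\in\suppind B_{\anchor^{(1)}}\cap\suppind B_{\anchor^{(2)}}$, $n\in\globind[\anchor^{(1)}]i$, and $n\notin\globind[\anchor^{(2)}]i$. This immediately proves the preliminary claim that some anchor has $x\in\suppindBA$, and, by definition of the global knot vector, $P_{i,n}(\anchor^{(1)})\subset\skel_i$ while $P_{i,n}(\anchor^{(2)})\not\subset\skel_i$.

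If $x\in\skel_i$, I set $\anchor\coloneqq\anchor^{(2)}$ and fix any $z\in P_{i,n}(\anchor^{(2)})\setminus\skel_i$; the pair $(x,z)$ is aligned in direction $i$ (both coordinates are $n$) and splits across $\skel_i$, so the lemma yields an $i$-orthogonal \T-junction $\Tjunc$ with $\cell=\ascell(\Tjunc)$ satisfying $\overline\Tjunc\cap\conv\{x,z\}\ne\nothing$, $x_{\pdir(\Tjunc)}\ne z_{\pdir(\Tjunc)}$, and $\cell_{\pdir(\Tjunc)}\cap\conv\{x_{\pdir(\Tjunc)},z_{\pdir(\Tjunc)}\}\ne\nothing$. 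If instead $x\notin\skel_i$, I set $\anchor\coloneqq\anchor^{(1)}$, pick any $w\in P_{i,n}(\anchor^{(1)})\subset\skel_i$, and invoke the lemma with the roles of the two points interchanged to obtain an analogous \T-junction for the pair $(w,x)$.

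In either case the companion point (either $z$ or $w$) belongs to $P_{i,n}(\anchor)$, so its $\pdir(\Tjunc)$-th component lies in $\anchor_{\pdir(\Tjunc)}$. The inclusions $\conv\{x,\text{comp}\}\subset\conv(P_{i,n}(\anchor)\cup\{x\})$ and its $\pdir(\Tjunc)$-projection analog then upgrade the lemma's output to the required properties~(1) and~(2), while setting $y$ equal to the $\pdir(\Tjunc)$-component of the companion produces property~(3). I expect the only delicate point to be the case distinction on membership in $\skel_i$: neither of the two anchors from the definition of $\ATJ_i(n)$ can be used uniformly, because in each case the lemma needs exactly one of the two input points inside $\skel_i$ and the other outside.
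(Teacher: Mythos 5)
Your proposal is correct and follows essentially the same route as the paper's proof: extract the two anchors from the triple-intersection definition of $\ATJ_i(n)$, case-split on $x\in\skel_i$ to choose which anchor supplies the companion point, apply \cref{prop: if x in Ski but not y then there is a T-junction}, and use $\pdir(\Tjunc)\ne i$ to pass from the companion point's component to $\anchor_{\pdir(\Tjunc)}$. No gaps.
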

\begin{proof}
Consider arbitrary $i\in\{1,\dots,d\},n\in\{0,\dots,N_i\}$ with  $\ATJ_i(n)\ne\nothing$ and arbitrary
\begin{equation}
 x\in\ATJ_i(n)=\slice_i(n)\cap\bigcup_{\substack{\anchor\in\anchors\\n\in\globind i}}\suppindBA\cap\bigcup_{\substack{\anchor\in\anchors\\n\notin\globind i}}\suppindBA. 
\end{equation}
There are by construction anchors $\anchorone,\anchortwo$ with $n\in\globind[\anchorone]i$ and $n\notin\globind[\anchortwo]i$. 
The \cref{df: knot vectors and vectors} of global knot vectors yields equivalently 
$P_{i,n}(\anchorone)\subset\skel_i$ and $P_{i,n}(\anchortwo)\not\subset\skel_i$.

If $x\in\skel_i$, then set $\anchor\coloneqq\anchortwo$, otherwise $\anchor\coloneqq\anchorone$. There is a point 
$y\in P_{i,n}(\anchor)$
such that $x\in\skel_i\notni y$ or $x\notin\skel_i\ni y$. \Cref{prop: if x in Ski but not y then there is a T-junction} yields an $i$-orthogonal \T-junction $\Tjunc\in\mathbb{T}_i$ and associated cell $\cell$ with
\begin{align}
\overline\Tjunc\cap\conv\{x,y\}&\ne\nothing ,\\
 \cell_{\pdir(\Tjunc)}\cap\conv\{x_{\pdir(\Tjunc)},y_{\pdir(\Tjunc)}\})&\ne\nothing ,\\
 y_{\pdir(\Tjunc)}&\ne x_{\pdir(\Tjunc)}.
\end{align}
Since $y\in P_{i,n}(\anchor)$ and $\pdir(\Tjunc)\ne i=\odir(\Tjunc)$, this concludes the proof.
\end{proof}

\begin{definition}[Geometric \T-junction extensions and analysis-suitability]\label{def:GAS}\ 
  Let $\Tjunc$ be a \T-junction with $\cell=\ascell(\Tjunc)$, $i=\odir(\Tjunc)$ and $j=\pdir(\Tjunc)$.
  We then define local knot vectors as follows. 
  
  \begin{enumerate}
    \item For $k = j$, we define $\locextind j = (\ell_0,\dots,\ell_{p_j})$ as the vector of $(p_j+1)$ consecutive indices from $\globind[\Tjunc] j$, such that 
  \begin{equation}
      \begin{aligned}
        \{\ell_{p_j/2}\} &= \Tjunc_j,  && \text{ if $p_j$ is even}, \\
        \ell_{\lfloor p_j/2\rfloor} &= \inf\cell_j,\quad
        \ell_{\lceil p_j/2\rceil} =        \sup\cell_j,   && \text{ if $p_j$ is odd}. 
      \end{aligned}
      \label{as_tsplines::def::gtj::eq::pdir}
  \end{equation}
    \item For $k=i$, the local knot vector is the singleton $\locextind i = \Tjunc_i$.
    \item For $k \not\in \{i, j\}$ we define $\locextind k= (\ell_0,\dots,\ell_{p_k + 1 + c_k})$,
  where $c_k = p_k \text{ mod }2$,
    as the vector of $(p_k + 2 + c_k)$ consecutive indices from $\globind[\Tjunc] k$, such that \label{as_tsplines::definition::gtj::extension3}
  \begin{equation}
    \Tjunc_k = (\ell_{\lceil p_k/2\rceil}, \ell_{\lceil p_k/2\rceil+1}). 
  \end{equation}
  This means that the local knot vector has $p_k+3$ elements if $p_k$ is odd and $p_k + 2$ if $p_k$ is even, and $\Tjunc_k$ is centered within these elements, cf.\@ the definition of local knot vectors for anchors.
  \end{enumerate}
  We then call
  \begin{equation}
    \GTJ_i(\Tjunc) \coloneqq \bigtimes_{k=1}^{d}\conv(\locextind{k})
  \end{equation}
  the \emph{geometric \T-junction extension} (GTJ) of $\Tjunc$, and we say that it is an $i$-orthogonal extension in $j$-direction. Note that $\GTJ_i(\Tjunc) \not\subset \skel_i$.
    
  A mesh $\mesh$ is \emph{strongly geometrically analysis-suitable ($\SGAS$)}, if for any two T-junc\-tions $\Tjunc_1,\Tjunc_2$ with orthogonal directions $i_1=  \odir(\Tjunc_1)\ne\odir(\Tjunc_2)=i_2$ holds
  \begin{equation}
    \GTJ_{i_1}(\Tjunc_1) \cap \GTJ_{i_2}(\Tjunc_2) = \nothing.
    \label{as_tsplines::def::gtj::intersection}
  \end{equation}
 We call $\mesh$ \emph{weakly geometrically analysis-suitable ($\WGAS$)}, if \eqref{as_tsplines::def::gtj::intersection} holds for any two \T-junctions $\Tjunc_1,\Tjunc_2$ with orthogonal directions $\odir(\Tjunc_1)\ne\odir(\Tjunc_2)$ and pointing directions $\pdir(\Tjunc_1)\ne\pdir(\Tjunc_2)$.
 
 We will omit the dependency of the orthogonal direction, when clear from the context, e.g. write $\GTJ(\Tjunc) \equiv \GTJ_i(\Tjunc)$, for $\odir(\Tjunc) = i$. 
\end{definition}
Note that the latter is a weaker criterion since \T-junction extensions with different orthogonal directions but equal pointing direction are allowed to intersect. Later in this paper, we will refer to the set $\GTJ_i \equiv \GTJ_i(\mesh)$ as the union of all geometric \T-junction extensions for hanging interfaces $\Tjunc$ with $\odir(\Tjunc) = i$, i.e.
 \begin{align}
   \GTJ_i &\coloneqq \bigcup_{\Tjunc \in \mathbb{T}_i} \GTJ(\Tjunc),\\
   \mathbb{T}_i &\coloneqq \{ \Tjunc\in\Hdim{d-2} \mid \text{valence}(\Tjunc) < 4,\,\Tjunc\not\subset\partial\inddomain,\, \odir(\Tjunc) = i\}.
 \end{align}
A mesh is then $\SGAS$ if $\GTJ_i \cap \GTJ_j = \nothing$ for $i\neq j$. 

\begin{remark}
  Note that the above definition of geometric \T-junction extensions  is consistent with the literature \cite{VeigaBuffaEtAl13} for the 2D case. 
  A \T-junction is then given as $\Tjunc = \{ i \} \times \{j\}$, where $\pdir(\Tjunc)= 1$ corresponds to a \T-junction of type $\vdash$ or $\dashv$ and  $\pdir(\Tjunc) = 2$ corresponds to a \T-junction of type $\bot$ or $\top$. In any case, the \T-junction extension will be a line along the pointing direction, consisting of $p_{\pdir(\Tjunc)} + 1$ consecutive indices from the knot vector, as in the 2D case. 
\end{remark}

In the case $d=2$, $\SGAS$ and $\WGAS$ are equivalent and sufficient for linear independence, see \cite{VeigaBuffaEtAl13}. We assume for the rest of this paper that $d\ge 3$ and that the initial mesh is sufficiently fine in the sense of the assumption below. 
It is applied in \cref{lemma: child anchors have parent's knot vectors}, which is used for the theorems in \cref{sec: DC,sec: theorems}.
\begin{assumption}\label{assump: active neighbor cells in 3 directions}
For any mesh considered below, there are for each cell $\cell\in\Hdim d$
at least three distinct directions $i\ne j\ne k\ne i$ in each of which $\cell$ has an active neighbor cell.
E.g., this is fulfilled if the initial mesh contains at least 2 active cells in each of three pairwise distinct directions.
\end{assumption}
\begin{lemma}\label{lemma: if wgas then projections are not partially in the skeleton}
 Let $\mesh$ be a $\WGAS$ mesh, $\faceE$ an anchor or \T-junction and $\locind[\faceE]\ell$ its local knot vector in direction $\ell\in\{1,\dots,d\}$, then for any $m\in\conv\locind[\faceE]\ell$ holds $\overline{P_{j,m}(\faceE)}\subset\skel_j$ or $P_{j,m}(\faceE)\cap\skel_j=\nothing$.
\end{lemma}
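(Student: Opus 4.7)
The plan is to proceed by a case analysis on $m$ (interpreting the $j$ in the conclusion as $\ell$, since otherwise the direction in which we project is unrelated to anything in the hypothesis), dispatching two easy subcases directly from the definitions and reducing the remaining subcase to a contradiction with $\WGAS$ via \cref{prop: if x in Ski but not y then there is a T-junction}.

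For the two easy subcases, first, if $m\notin\mathbb{N}$ then no $\ell$-orthogonal hyperface $\faceE[F]\in\Horth\ell$ meets the slice $\slice_\ell(m)$ because $\faceE[F]_\ell$ is always an integer singleton (mesh vertices are integer by \cref{alg: subdiv}), so $P_{\ell,m}(\faceE)\cap\skel_\ell=\nothing$. Second, if $m\in\locind[\faceE]\ell$, then in particular $m\in\globind[\faceE]\ell$, so $P_{\ell,m}(\faceE)\subset\skel_\ell$ directly from the definition of the global knot vector in \cref{df: knot vectors and vectors}; since $\skel_\ell$ is closed, this passes to the closure.

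The remaining case is $m$ an integer in $\conv\locind[\faceE]\ell\setminus\locind[\faceE]\ell$. I would use the key fact that $\locind[\faceE]\ell$ is a block of consecutive entries of $\globind[\faceE]\ell$ to deduce that such an $m$ cannot lie in $\globind[\faceE]\ell$ at all (it would be a missing interior entry), and hence $P_{\ell,m}(\faceE)\not\subset\skel_\ell$; what must be shown is $P_{\ell,m}(\faceE)\cap\skel_\ell=\nothing$. Assuming for contradiction the existence of $x\in P_{\ell,m}(\faceE)\cap\skel_\ell$ and $y\in P_{\ell,m}(\faceE)\setminus\skel_\ell$, we have $x_\ell=y_\ell=m$, and \cref{prop: if x in Ski but not y then there is a T-junction} (applied with $i=\ell$) yields an $\ell$-orthogonal \T-junction $\Tjunc$ with pointing direction $r=\pdir(\Tjunc)$ and associated cell $\cell=\ascell(\Tjunc)$ such that $\overline\Tjunc_\ell=\{m\}$, $\overline\Tjunc\cap\conv\{x,y\}\ne\nothing$, $x_r\ne y_r$, and $\cell_r\cap\conv\{x_r,y_r\}\ne\nothing$. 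In particular, $\faceE_r$ is an interval (not a singleton) since $x_r\ne y_r$ with both in $\overline{\faceE_r}$.

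The crux is then to exhibit a second \T-junction $\Tjunc'$ with $\odir(\Tjunc')\ne\ell$ and $\pdir(\Tjunc')\ne r$ whose geometric extension $\GTJ_{\odir(\Tjunc')}(\Tjunc')$ meets $\GTJ_\ell(\Tjunc)$, thereby contradicting $\WGAS$. When $\faceE$ itself is a \T-junction with $\odir(\faceE)\ne\ell$ and $\pdir(\faceE)\ne r$, one takes $\Tjunc'=\faceE$ and checks the intersection from the centering of $\locextind[\Tjunc]k$ and $\locextind[\faceE]k$ around $\Tjunc_k$ and $\faceE_k$ respectively in the remaining $d-2$ directions, together with the fact that $m\in\conv\locind[\faceE]\ell$ forces the $\ell$-coordinate ranges of the two extensions to overlap. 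In the general anchor case, no pre-existing \T-junction is attached to $\faceE$, so $\Tjunc'$ is produced iteratively: starting from a suitable pair of points in $\overline\faceE$ aligned in a direction different from both $\ell$ and $r$, one reapplies \cref{prop: if x in Ski but not y then there is a T-junction} to obtain a chain of \T-junctions whose extensions remain inside the common support region of $\faceE$ and $\Tjunc$; eventually one of them must have the required orthogonal and pointing directions. The hardest part of the argument is exactly this last construction — keeping track, across the $d-1$ directions different from $\ell$, of which coordinates of $\faceE$ are singletons and which are intervals (dictated by the parities $p_k$ for an anchor, respectively by $\odir(\faceE),\pdir(\faceE)$ for a \T-junction) and verifying that the extensions of $\Tjunc$ and $\Tjunc'$ actually intersect under the centering rules in \cref{def:GAS}.
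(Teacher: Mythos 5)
Your setup matches the paper's: reduce to $m$ an integer outside $\locind[\faceE]\ell$ (so that $P_{\ell,m}(\faceE)\not\subset\skel_\ell$), assume a point $x\in P_{\ell,m}(\faceE)\cap\skel_\ell$ and a point $y\in P_{\ell,m}(\faceE)\setminus\skel_\ell$, and invoke \cref{prop: if x in Ski but not y then there is a T-junction} to get a first \T-junction $\Tjunc$ with $\odir(\Tjunc)=\ell$ and $\pdir(\Tjunc)=r$. The paper then notes, exactly as you do, that $x_r\ne y_r$ forces $\faceE_r$ to be an open interval. But from there your argument has a genuine gap. The second, ``crossed'' \T-junction is not produced by a chain of applications of the lemma to pairs of points inside $\overline\faceE$: that lemma needs one point in a skeleton and one outside, and two points of a single mesh entity $\faceE$ cannot supply that. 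The paper's key observation is that $\overline\Tjunc\subset\skel_r$ (a \T-junction with pointing direction $r$ sits in the boundary of an $r$-orthogonal face) while $\faceE\cap\skel_r=\nothing$ (since $\faceE_r$ is an open interval). Hence the pair $z\in\faceE$ and $P_{\ell,m}(z)\in\overline\Tjunc\cap\conv\{x,y\}$, which are aligned in every direction except $\ell$, satisfies $z\notin\skel_r\ni P_{\ell,m}(z)$, and one more application of the same lemma with $i=r$ yields $\Tjunc'$ with $\odir(\Tjunc')=r$ and, because the two points differ only in the $\ell$-th coordinate, necessarily $\pdir(\Tjunc')=\ell$. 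No iteration or case analysis on the type of $\faceE$ is needed, and your special case ``take $\Tjunc'=\faceE$'' does not arise.

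The second missing ingredient is that even with $\Tjunc$ and $\Tjunc'$ in hand, their extensions need not intersect, so $\WGAS$ is not immediately contradicted. The paper handles this by choosing $m$ \emph{minimal} among the offending indices beyond $\sup\faceE_\ell$ and then splitting into two cases: either every entry of $\locextind[\Tjunc']\ell$ in $(\sup\faceE_\ell,m)$ is also an entry of $\locind[\faceE]\ell$, in which case $m\in\conv\locextind[\Tjunc']\ell$ and $P_{\ell,m}(z)\in\GTJ(\Tjunc)\cap\GTJ(\Tjunc')$ violates $\WGAS$ (the directions are crossed, so the weak criterion applies); or there is an intermediate knot $\tilde m$ of $\Tjunc'$ not shared by $\faceE$, which makes $P_{\ell,\tilde m}(\faceE)$ partially in the skeleton and contradicts the minimality of $m$. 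Your proposal contains neither the minimality device nor this dichotomy, so it cannot close the argument as written.
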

\begin{proof}
Since $\skel_j$ is by construction a closed set, $P_{j,m}(\faceE)\subset\skel_j$ is sufficient for $\overline{P_{j,m}(\faceE)}\subset\skel_j$, and we only need to show that $P_{j,m}(\faceE)\subset\skel_j$ or $P_{j,m}(\faceE)\cap\skel_j=\nothing$.

 Assume for contradiction a $\WGAS$ mesh and $m\in\conv\locind[\faceE]\ell$ such that there exist $x,y\in P_{j,m}(\faceE)$ with $x\in\skel_j\not\ni y$. Recall from the beginning of \cref{sec: highdim tjunctions} that the mesh consists of boxes with integer vertices and hence $m$ is an integer.
 By definition of mesh entities we have $P_{j,n}(\faceE)\subset\skel_j$ for $n\in\{\inf\faceE_j,\sup\faceE_j\}$ 
 and $P_{j,n}(\faceE)\cap\skel_j=\nothing$ for $n\in\faceE_j\setminus\{\inf\faceE_j,\sup\faceE_j\}$.
 Hence $m<\inf\faceE_j$ or $m>\sup\faceE_j$. Without loss of generality, we assume $m>\sup\faceE_j$, and we assume further that $m$ is minimal, i.e.\@ that there is no $\tilde m\in(\sup\faceE_j,m)$ with $P_{j,\tilde m}(\faceE)\not\subset\skel_j$ and $P_{j,\tilde m}(\faceE)\cap\skel_j\ne\nothing$.

 \Cref{prop: if x in Ski but not y then there is a T-junction} yields a \T-junction $\Tjunc$, $\odir(\Tjunc)=j$, $\cell=\ascell(\Tjunc)$, with $\pdir(\Tjunc)=k\ne j$ and
 \begin{equation}
  \overline\Tjunc\cap\conv\{x,y\}\ne\nothing,\quad x_k\ne y_k,\quad\cell_k\cap[\min(x_k,y_k),\max(x_k,y_k)]\ne\nothing.
 \end{equation}
From $k\ne j$ we get $x_k,y_k\in\faceE_k$, and from $x_k\ne y_k$ we get that $\faceE_k$ is not a singleton but an open interval, which yields
 $\faceE\cap\skel_k=\nothing.$
Due to $\overline\Tjunc\cap\conv\{x,y\}\ne\nothing$, there is $z\in\faceE$ such that 
\begin{equation}\label{projections are not partially in the skeleton:eq: pjmz in t cap conv x y}
 P_{j,m}(z)=(z_1,\dots,z_{j-1},m,z_{j+1},\dots,z_d)\in \overline\Tjunc\cap\conv\{x,y\}.
\end{equation}
From $\odir(\Tjunc)=j$ and $\pdir(\Tjunc)=k$ we get 
$\Tjunc\in\Horth^{(\{j,k\})}$. Further, $\Tjunc$ is in the boundary of some $k$-orthogonal mesh entity, which yields $\overline\Tjunc\subset\skel_k$.
Together with $\faceE\cap\skel_k=\nothing$, we get $z\notin\skel_k\ni P_{j,m}(z)$.
\Cref{prop: if x in Ski but not y then there is a T-junction} yields another \T-junction $\Tjunc'$, $\odir(\Tjunc)=k$, $\cell'=\ascell(\Tjunc')$, with 
\begin{gather}
 \overline{\Tjunc'}\cap\conv\{z, P_{j,m}(z) \}\ne\nothing,\quad z_{\pdir(\Tjunc')}\ne (P_{j,m}(z))_{\pdir(\Tjunc')}, \\ 
 \quad\cell'_{\pdir(\Tjunc')}\cap\conv[z_{\pdir(\Tjunc')}, (P_{j,m}(z))_{\pdir(\Tjunc')}]\ne\nothing.
\end{gather}
Since $z$ and $P_{j,m}(z)$ differ only in direction $j$, $z_{\pdir(\Tjunc')}\ne (P_{j,m}(z))_{\pdir(\Tjunc')}$ yields that $\pdir(\Tjunc')=j$. Hence we have
$z_j\ne m$  and $ \cell'_j\cap\conv\{z_j, m\}\ne\nothing$.
From $\overline{\Tjunc'}\cap\conv\{z, P_{j,m}(z) \}\ne\nothing$ we get 
$z_\ell=(P_{j,m}(z))_\ell\in\overline{\Tjunc'_\ell}\subset\conv\locextind[\Tjunc']\ell$ for all $\ell\ne j$.
From \cref{projections are not partially in the skeleton:eq: pjmz in t cap conv x y} above, we also have 
$P_{j,m}(z)\in\overline\Tjunc\subset\GTJ(\Tjunc)$. 

This yields by construction of $\Tjunc, \Tjunc'$ two cases listed below. 

\emph{Case 1:}\enspace $\locextind[\Tjunc']j\cap(\sup\faceE_j,m)\subset\locind[\faceE]j\cap(\sup\faceE_j,m)$.
This leads to $m\in\conv\locextind[\Tjunc']j$ and consequently $\GTJ(\Tjunc)\cap\GTJ(\Tjunc')\ni P_{j,m}(z)$ which means that $\mesh\notin\WGAS$ in contradiction to the assumption.

\emph{Case 2:}\enspace There is some $\tilde m\in\locextind[\Tjunc']j\cap(\sup\faceE_j,m)\setminus\locind[\faceE]j$.
This yields $P_{j,\tilde m}(\faceE)\not\subset\skel_j$, and $P_{j,\tilde m}(z)\in P_{j,\tilde m}(\Tjunc')\subset\skel_j$, hence $P_{j,\tilde m}(\faceE)\cap\skel_j\ne\nothing$ in contradiction to the minimality of $m$.
\end{proof}

\begin{lemma}\label{lemma: nonoverlapping implies tjunction}
 Let $\mesh\in\WGAS$ and $\faceE,\face\in\mesh$ be anchors or \T-junctions, and
 \begin{equation}
 m\in\locind[\faceE]j\cap\conv\locind[\face]j\setminus\locind[\face]j.
 \end{equation}
 Then there is a \T-junction $\Tjunc\in\tjunctions_j$ with $\Tjunc_j=\{m\}$, $k=\pdir(\Tjunc)$, $\cell=\ascell(\Tjunc)$ such that
 \begin{equation}
  \overline\Tjunc\cap P_{j,m}(\MBox[\faceE,\face]) \ne \nothing, \quad
  \cell_k\cap \MBox[\faceE,\face]_k \ne \nothing, \quad
  \faceE_k\cap\face_k = \nothing,
 \end{equation}
 with $\MBox[\faceE,\face] = \bigtimes_{\ell=1}^d\MBox[\faceE,\face]_\ell$ and 
\begin{equation}
\MBox[\faceE,\face]_\ell = 
\begin{cases}
\faceE_\ell\cap\face_\ell & \faceE_\ell\cap\face_\ell\ne\nothing \\
[\sup\faceE_\ell,\inf\face_\ell] & \sup\faceE_\ell\le\inf\face_\ell \\
[\sup\face_\ell,\inf\faceE_\ell] & \inf\faceE_\ell\ge\sup\face_\ell.
\end{cases}
\end{equation}

\end{lemma}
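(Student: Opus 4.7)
The plan is to apply \cref{prop: if x in Ski but not y then there is a T-junction} in direction $j$ to cleverly chosen witness points $x\in P_{j,m}(\faceE)$ and $y\in P_{j,m}(\face)$ in the slice $\slice_j(m)$, and then refine the resulting T-junction via a pigeonhole/limit argument so that both $\Tjunc$ and $\cell$ meet $\MBox[\faceE,\face]$.

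First, I would establish $P_{j,m}(\faceE)\subset\skel_j$ and $P_{j,m}(\face)\cap\skel_j=\nothing$. The first is immediate, since $\locind[\faceE]j$ consists of consecutive entries of $\globind[\faceE]j$, so $m\in\locind[\faceE]j$ gives $m\in\globind[\faceE]j$ and hence $P_{j,m}(\faceE)\subset\skel_j$. For the second, because $\locind[\face]j$ is formed from consecutive indices of $\globind[\face]j$, every entry of $\globind[\face]j$ strictly between its first and last member must itself belong to $\locind[\face]j$; thus $m\in\conv\locind[\face]j\setminus\locind[\face]j$ forces $m\notin\globind[\face]j$ and $P_{j,m}(\face)\not\subset\skel_j$. \Cref{lemma: if wgas then projections are not partially in the skeleton} applied in direction $j$ then upgrades this to $P_{j,m}(\face)\cap\skel_j=\nothing$.

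Next, I would introduce the gap set $K=\{\ell\ne j\mid\faceE_\ell\cap\face_\ell=\nothing\}$, fix some $c_\ell\in\faceE_\ell\cap\face_\ell=\MBox[\faceE,\face]_\ell$ for each $\ell\notin K\cup\{j\}$, and, for each $n\in\N$, choose $x^{(n)}_k\in\faceE_k$ with $x^{(n)}_k\to\sup\faceE_k$ and $y^{(n)}_k\in\face_k$ with $y^{(n)}_k\to\inf\face_k$ for $k\in K$ (assuming, WLOG, $\sup\faceE_k\le\inf\face_k$). Setting the $j$-coordinates to $m$ and making $x^{(n)},y^{(n)}$ agree on the overlap directions yields $x^{(n)}\in P_{j,m}(\faceE)\subset\skel_j$ and $y^{(n)}\in P_{j,m}(\face)\setminus\skel_j$. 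Applying \cref{prop: if x in Ski but not y then there is a T-junction} with $i=j$ produces a $j$-orthogonal T-junction $\Tjunc^{(n)}$ with $\Tjunc^{(n)}_j=\{m\}$, a pointing direction $k^{(n)}$, and a cell $\cell^{(n)}=\ascell(\Tjunc^{(n)})$ whose defining intersections involve the segments $\conv\{x^{(n)},y^{(n)}\}$; since $x^{(n)},y^{(n)}$ agree off $K\cup\{j\}$, necessarily $k^{(n)}\in K$. Because the mesh contains only finitely many T-junctions and cells, pigeonhole isolates a subsequence along which $(\Tjunc^{(n)},\cell^{(n)},k^{(n)})$ stabilizes at a fixed $(\Tjunc,\cell,k)$.

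For this triple, $k\in K$ immediately yields the third claim $\faceE_k\cap\face_k=\nothing$. Extracting a convergent subsequence of intersection points $z^{(n)}\in\overline\Tjunc\cap\conv\{x^{(n)},y^{(n)}\}$ inside the compact set $\overline\Tjunc$ gives a limit in $\overline\Tjunc\cap\conv\{\bar x,\bar y\}$, and since the limiting segment lies inside $P_{j,m}(\MBox[\faceE,\face])$ by construction, the first claim follows. The hard part will be the second claim $\cell_k\cap\MBox[\faceE,\face]_k\ne\nothing$: writing $\cell_k=(a,b)$ with integer endpoints, the condition $\cell_k\cap[\sup\faceE_k-\epsilon_n,\inf\face_k+\delta_n]\ne\nothing$ with $\epsilon_n,\delta_n\to0$ forces $a\le\inf\face_k$ and $b\ge\sup\faceE_k$ by integrality, which in the generic case $\sup\faceE_k<\inf\face_k$ immediately gives the desired overlap with $[\sup\faceE_k,\inf\face_k]=\MBox[\faceE,\face]_k$. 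The degenerate case $\sup\faceE_k=\inf\face_k$ would need a tailored perturbation, mirroring the closing paragraph of the proof of \cref{prop: if x in Ski but not y then there is a T-junction}, to ensure the extracted cell straddles the shared integer value.
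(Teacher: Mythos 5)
Your overall strategy is the one the paper uses: establish $P_{j,m}(\faceE)\subset\skel_j$ and $P_{j,m}(\face)\cap\skel_j=\nothing$ exactly as you do (consecutiveness of local knot vectors plus \cref{lemma: if wgas then projections are not partially in the skeleton}), then feed witness pairs into \cref{prop: if x in Ski but not y then there is a T-junction} and pass to the limit pair whose $k$-th components are $\sup\faceE_k$ and $\inf\face_k$. The paper organizes the limit by taking the boundary pair $(x,y)$ directly and using that the finite union of the sets $\overline{\Tjunc^{(\tilde x,\tilde y)}}$ is closed; your sequence-plus-pigeonhole packaging is equivalent, and your derivations of the first and third claims ($\overline\Tjunc\cap P_{j,m}(\MBox[\faceE,\face])\ne\nothing$ and $\faceE_k\cap\face_k=\nothing$) are sound.

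The genuine gap is in your final step. From $\cell_k\cap[\sup\faceE_k-\epsilon_n,\inf\face_k+\delta_n]\ne\nothing$ for all $n$ you correctly obtain $a\le\inf\face_k$ and $b\ge\sup\faceE_k$ for $\cell_k=(a,b)$, but these inequalities do \emph{not} ``immediately give'' $\cell_k\cap[\sup\faceE_k,\inf\face_k]\ne\nothing$, even when $\sup\faceE_k<\inf\face_k$: the open interval $(a,b)=(\inf\face_k,\,\inf\face_k+1)$ satisfies both and is disjoint from $\MBox[\faceE,\face]_k$. Because $\cell_k$ is open and $\MBox[\faceE,\face]_k$ is closed, the nonempty intersections can vanish in the limit precisely when the stabilized cell abuts $\MBox[\faceE,\face]_k$ only at an endpoint; that, and not $\sup\faceE_k=\inf\face_k$, is the problematic configuration, so the ``tailored perturbation'' you defer is aimed at the wrong case. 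Closing the argument requires using more of the structure delivered by \cref{prop: if x in Ski but not y then there is a T-junction}: the stabilized $\Tjunc$ has $\Tjunc_k=\{t_k\}$ with $t_k\in[\sup\faceE_k,\inf\face_k]$, and $\ascell(\Tjunc)$ sits on the $y$-side of $t_k$, which gives $a=t_k$ and $b>t_k\ge\sup\faceE_k$ strictly; one must then still exclude $t_k=\inf\face_k$. (To be fair, the paper's own proof dispatches this transfer with the one-line assertion that the properties hold for $\Tjunc^{(\tilde x,\tilde y)}$ with $\tilde x,\tilde y$ close to $x,y$; but the explicit justification you give in its place is, as written, incorrect.)
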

\begin{proof}
By construction of local knot vectors, we have $P_{j,m}(\faceE)\subset \skel_j\not\supset P_{j,m}(\face)$.
\Cref{lemma: if wgas then projections are not partially in the skeleton} yields 
$\overline{P_{j,m}(\faceE)}\subset \skel_j$ and $P_{j,m}(\face)\cap\skel_j\ne\nothing$.
Using \cref{prop: if x in Ski but not y then there is a T-junction}, there exists for each $x\in \overline{P_{j, m}(\faceE)}, y\in P_{j, m}(\face)$ a (possibly non-unique) $j$-orthogonal \T-junction $\Tjunc^{(x,y)}\in\tjunctions_j$, with $\pdir(\Tjunc^{(x,y)}) = k^{(x,y)}$, $\cell^{(x,y)}=\ascell(\Tjunc^{(x,y)})$, such that
\begin{gather}
\overline\Tjunc^{(x,y)} \cap \conv\{x, y\}\neq \nothing, \quad 
x_{k^{(x,y)}}\ne y_{k^{(x,y)}}, \\
\text{and}\quad \cell^{(x,y)}_{k^{(x,y)}} \cap\conv\{x_{k^{(x,y)}},y_{k^{(x,y)}}\}\ne\nothing.
\end{gather}
We have 
\begin{equation}
\bigcup_{\substack{\tilde x\in \overline{P_{j, m}(\faceE)}\\ \tilde y\in P_{j, m}(\face)}}\overline{\Tjunc^{(\tilde x,\tilde y)}}
\cap\conv\{x,y\}\ne\nothing\quad\text{for any }x\in \overline{P_{j, m}(\faceE)}, y\in P_{j, m}(\face),
\end{equation}
and hence also for any choice of $x\in \overline{P_{j, m}(\faceE)},y\in \overline{P_{j, m}(\face)}$,
since the union $\bigcup_{(x,y)}\overline{\Tjunc^{(x,y)}}$ is a closed set.
Consider a pair $(x,y)\in \overline{P_{j,m}(\faceE)}\times\overline{P_{j,m}(\face)}$ with
\begin{equation}\label{proof: nonoverlapping implies tjunction: MBox cases}
\begin{cases}
   x_j=y_j=m &  \ell=j \\ 
x_\ell=y_\ell\in\faceE_\ell\cap\face_\ell & \ell\ne j,\enspace \faceE_\ell\cap\face_\ell\ne\nothing \\
x_\ell=\sup\faceE_\ell,\enspace
y_\ell=\inf\face_\ell & 
\ell\ne j,\enspace \faceE_\ell\cap\face_\ell = \nothing,\enspace \sup\faceE_\ell\le\inf\face_\ell \\
x_\ell=\inf\faceE_\ell,\enspace
y_\ell=\sup\face_\ell &
\ell\ne j,\enspace \faceE_\ell\cap\face_\ell = \nothing,\enspace \inf\faceE_\ell\ge\sup\face_\ell,
\end{cases}
\end{equation}
which yields a \T-junction $\Tjunc\in\tjunctions_j$ from the union above, with $\pdir(\Tjunc) = k$, $\cell=\ascell(\Tjunc)$, such that $x_k\ne y_k$ and
\begin{gather}
\overline \Tjunc \cap P_{j,m}(\MBox[\faceE,\face])
\supseteq \overline\Tjunc\cap \conv\{x,y\} 
\ne\nothing,
\\
\cell_k \cap \MBox[\faceE,\face]_k
\supseteq \cell_k \cap\conv\{x_{k},y_{k}\}
\ne\nothing.
\end{gather}
If $y\in P_{j, m}(\face)$, this holds for $\Tjunc=\Tjunc^{(x,y)}$ as above.
If $y\in \overline{P_{j, m}(\face)}\setminus P_{j, m}(\face)$, then $\Tjunc=\Tjunc^{(\tilde x,\tilde y)}$ for some $\tilde x,\tilde y$ close to $x,y$.

From $j=\odir(\Tjunc)\ne\pdir(\Tjunc) = k$ we know that $k$ does not match the first case in \cref{proof: nonoverlapping implies tjunction: MBox cases}.
Since $x_k\ne y_k$, $k$ also does not match the second case, and hence $\faceE_{k}\cap \face_{k}=\nothing$.
This concludes the proof.
\end{proof}

\begin{lemma}\label{lemma: child anchors have parent's knot vectors}
 Given a $\WGAS$ box subdivision of a $\WGAS$ mesh using \cref{alg: subdiv}, $\cell\in\meshold\in\WGAS$, $\meshnew=\subdiv(\meshold,\cell,j)\in\WGAS$,
 there is for each new anchor $\hat\anchor\in\anchorsnew\setminus\anchorsold$ an old anchor $\anchor\in\anchorsold$ with $\suppindBA[\hat\anchor]\subset\supp_{\inddomain,\meshold} B_\anchor$ and $\locind[\hat\anchor]\ell=\locind \ell$ for all $\ell\ne j$.
\end{lemma}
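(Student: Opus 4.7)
Each new anchor $\hat\anchor\in\anchorsnew\setminus\anchorsold$ arises as one of the three children $\face[E]^{(1)},\face[E]^{(2)},\face[E]^{(3)}$ produced by Algorithm~\ref{alg: subdiv} from a parent entity $\face[E]\in\meshold$ with $\face[E]_j=\cell_j$, so $\hat\anchor_\ell=\face[E]_\ell$ for every $\ell\ne j$. I would perform a case analysis on the parity of $p_j$ to define the candidate old anchor $\anchor$. If $p_j$ is odd (so $j\in\kappa$), anchors must have a singleton $j$-component, forcing $\hat\anchor=\face[E]^{(2)}$ with $\hat\anchor_j=\{m\}$; I would take $\anchor$ with $\anchor_j=\{\inf\cell_j\}$ and $\anchor_\ell=\hat\anchor_\ell$ for $\ell\ne j$, which is a $j$-directional boundary face of $\face[E]$, hence a $\kappa$-orthogonal entity of $\meshold$ lying in $\AR$ (using $\cell\subset\AR$). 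If $p_j$ is even, the parent $\face[E]$ is itself already a $\kappa$-orthogonal anchor in $\AR$, so I would simply set $\anchor\coloneqq\face[E]$.

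\textbf{Transverse directions.} Algorithm~\ref{alg: subdiv} modifies only entities whose $j$-component equals $\cell_j$; any $\ell$-orthogonal hyperface ($\ell\ne j$) so split breaks into two $\ell$-orthogonal pieces whose closures reunite to the original closure, hence $\skel_\ell$ is the same set in $\meshold$ and $\meshnew$. The projections $P_{\ell,n}(\hat\anchor)$ and $P_{\ell,n}(\anchor)$ agree in every coordinate except the $j$-th, where both values lie inside $\overline{\cell_j}$. I aim to establish the equivalence $P_{\ell,n}(\hat\anchor)\subset\skel_\ell \Leftrightarrow P_{\ell,n}(\anchor)\subset\skel_\ell$. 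In the even-$p_j$ case one direction is immediate from $P_{\ell,n}(\hat\anchor)\subset P_{\ell,n}(\anchor)$. For the converse (and for both directions in the odd case), I would invoke Lemma~\ref{lemma: if wgas then projections are not partially in the skeleton} applied to $\anchor$ in the $\WGAS$ mesh $\meshold$: the resulting dichotomy upgrades any nonempty $P_{\ell,n}(\anchor)\cap\skel_\ell$ to full containment, and such an intersection is inferred from the corresponding property of $P_{\ell,n}(\hat\anchor)$ together with the observation that $\cell$ is a single cell of $\meshold$, which forbids an $\ell$-orthogonal hyperface of $\meshold$ from having a $j$-component strictly inside $\cell_j$. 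Since $\anchor$ and $\hat\anchor$ share the $\ell$-component (and its infimum), $\globind[\hat\anchor]\ell=\globind[\anchor]\ell$ lifts to $\locind[\hat\anchor]\ell=\locind[\anchor]\ell$.

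\textbf{Refined direction.} No new $j$-orthogonal hyperfaces are created away from $j$-coordinate $m$, so $\globind[\hat\anchor]j$ equals $\globind[\anchor]j$ with the single entry $m$ inserted between $\inf\cell_j$ and $\sup\cell_j$ (the interior of $\cell_j$ carries no other entry because $\cell$ is a cell of $\meshold$). A direct entry count then yields $\conv\locind[\hat\anchor]j\subset\conv\locind[\anchor]j$: in the odd-$p_j$ case, $\locind[\anchor]j$ centered at $\inf\cell_j$ reaches one knot further downwards than $\locind[\hat\anchor]j$ centered at $m$, and their upper ends coincide; in the even-$p_j$ case the analysis is analogous with $\{\inf\cell_j,\sup\cell_j\}$ as the two central entries of $\locind[\anchor]j$. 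Combined with the transverse equality, this yields $\suppindBA[\hat\anchor]\subset\supp_{\inddomain,\meshold}B_\anchor$.

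\textbf{Main obstacle.} The delicate step is the transverse dichotomy: an $\ell$-orthogonal hyperface of $\meshold$ whose $j$-component sat strictly inside $\cell_j$ could a priori break the equivalence. Excluding such configurations requires the $\WGAS$ property of $\meshold$ via Lemma~\ref{lemma: if wgas then projections are not partially in the skeleton}, together with Assumption~\ref{assump: active neighbor cells in 3 directions} to ensure a sufficiently rich neighboring mesh structure.
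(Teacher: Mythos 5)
Your overall skeleton matches the paper's: pick a parent anchor aligned with $\hat\anchor$ in direction $j$, note that only $\skel_j$ changes under the bisection, handle direction $j$ by a knot-insertion count, and reduce the transverse claim to an all-or-nothing statement about projections. Your treatment of the even-$p_j$ case is essentially sound, since there $P_{\ell,n}(\hat\anchor)\subset P_{\ell,n}(\anchor)$ and \cref{lemma: if wgas then projections are not partially in the skeleton} applied to $\anchor=\faceE$ in $\meshold$ closes the loop.

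The odd-$p_j$ case, however, contains two genuine gaps. First, you assert that $P_{j,\inf\cell_j}(\faceE)$ is a mesh entity of $\meshold$ (hence an anchor); this is not automatic, because the $j$-orthogonal face of $\cell$ at $\inf\cell_j$ may be decomposed more finely than $\faceE$ is. The paper devotes the first half of its proof to showing that the two opposite $j$-orthogonal faces of $\cell$ carry matching decompositions, via a contradiction that combines \cref{prop: if x in Ski but not y then there is a T-junction}, \cref{assump: active neighbor cells in 3 directions}, and the fact that the bisection of $\cell$ itself creates or eliminates a T-junction whose extension would clash with the hypothetical one. Second, and more seriously, your mechanism for the transverse equality fails: $P_{\ell,n}(\anchor)$ and $P_{\ell,n}(\hat\anchor)$ sit at the two distinct singleton $j$-coordinates $\inf\cell_j$ and $\midp\cell_j$, so neither contains the other, and \cref{lemma: if wgas then projections are not partially in the skeleton} (which only upgrades a nonempty intersection of a \emph{single} projection to full containment) cannot transfer skeleton membership from one to the other. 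The ``observation'' you use to bridge them --- that no $\ell$-orthogonal hyperface of $\meshold$ has $j$-component strictly inside $\cell_j$ --- is false: $n$ ranges over all of $\conv\locind[\anchor]\ell$, which reaches several knot lines beyond $\overline{\cell_\ell}$, and at such $\ell$-coordinates the hyperfaces belong to cells disjoint from $\cell$, whose $j$-extents are not constrained by $\cell$ being a single cell (indeed they need not even be constrained on $\partial\cell$ itself). Ruling out a mismatch is precisely where the paper's work lies: it extracts a $k$-orthogonal T-junction from the mismatch via \cref{lemma: nonoverlapping implies tjunction}, passes to the one closest to $\cell$, and in six sub-cases either exhibits a second T-junction (created or destroyed by the bisection) whose geometric extension meets the first, contradicting $\WGAS$, or contradicts minimality. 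Nothing in your proposal substitutes for that argument.
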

The proof is given in \cref{appendix: child anchors have parent's knot vectors}.

Note that \cref{lemma: child anchors have parent's knot vectors} does not hold
without \cref{assump: active neighbor cells in 3 directions}.
 Consider the 3D mesh in \cref{fig: counterex for child anchors}.
 In this example, \cref{assump: active neighbor cells in 3 directions} is not fulfilled, as the center cell $\cell$ has active neighbor cells in only two directions (the figure shows the active region).
For the 2-orthogonal bisection of $\cell$ (highlighted in red), the old and new mesh are $\WGAS$ as with $p_1=1$ the new \T-junctions only intersect with the neighbor cells, but not with $\cell$ or with the old \T-junctions.
Since $p_2$ is odd, any new anchor $\hat\anchor$ is contained in the closure of the new interface, and $\locind[\hat\anchor]1$ does not coincide with the local knot vector $\locind1$ of any old anchor $\anchor$, i.e.\@ \cref{lemma: child anchors have parent's knot vectors} does not hold in this case.
\begin{figure}[b!]
\centering
\begin{tikzpicture}[yscale=.85, xscale=1.05]
\begin{scope}[canvas is xy plane at z=0]
 \draw[LUH-gray] (0,0) grid (3,3);
\end{scope}
\begin{scope}[canvas is xz plane at y=1.5]
\fill[LUH-lred] (1,0) rectangle (2,1);
\end{scope}
\begin{scope}[canvas is xy plane at z=1]
 \draw (0,0) grid (3,3);
\end{scope}
 \foreach \a in {0,1,2}
  \foreach \b in {0,1,2}
    \draw[LUH-gray] (\a,\b,0)--(\a,\b,1);
\foreach \b in {0,1,2,3}
  \draw (3,\b,0)--(3,\b,1);
\foreach \a in {0,1,2}
  \draw (\a,3,0)--(\a,3,1);
\begin{scope}[canvas is yz plane at x=1.5]
\draw[LUH-gray] (0,0) rectangle (1,1) (2,0) rectangle (3,1);
\draw (0,1)--(1,1) (2,1)--(3,1)--(3,0);
\end{scope}
\draw[->] (5,1,0)--+(.5,0,0) node[right]{$p_1=1$};
\draw[->] (5,1,0)--+(0,.5,0) node[above]{$p_2$ odd};
\draw[->] (5,1,0)--+(0,0,.5) node[left]{$p_3\ge1$};
\end{tikzpicture}
\caption{In this example, \cref{assump: active neighbor cells in 3 directions} is not fulfilled.
For the 2-orthogonal bisection of $\cell$ (highlighted in red), the old and new mesh are $\WGAS$, but for any new anchor $\hat\anchor$, $\locind[\hat\anchor]1$ does not coincide with the local knot vector $\locind1$ of any old anchor $\anchor$, i.e.\@ \cref{lemma: child anchors have parent's knot vectors} does not hold in this case.}
\label{fig: counterex for child anchors}
\end{figure}
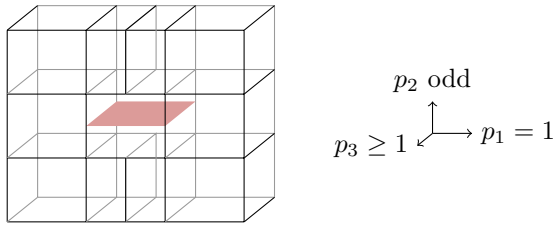

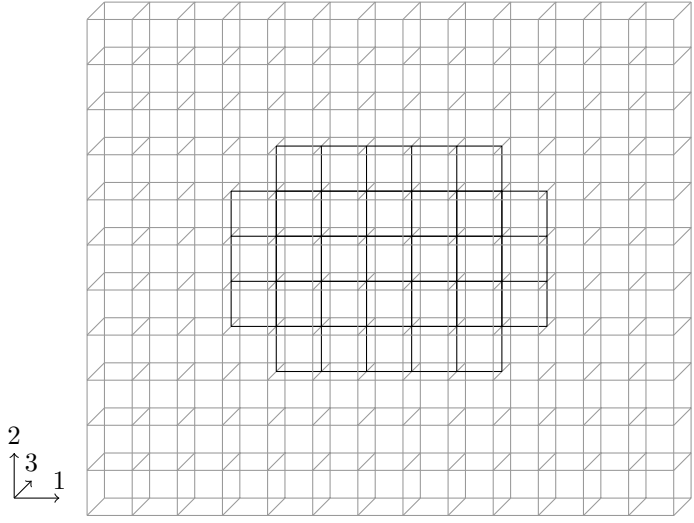
\begin{figure}[ht]
 \centering
 \begin{tikzpicture}[scale=.6]
\begin{scope}[canvas is xy plane at z=0]
 \draw[LUH-gray] (0,0) grid (13,11);
\end{scope}
 \foreach \a in {0,...,13}
  \foreach \b in {0,...,11}
    \draw[LUH-gray] (\a,\b,0)--(\a,\b,1);
\begin{scope}[canvas is xy plane at z=.5]
 \draw (3,4) grid (10,7);
 \draw (4,3) grid (9,8);
\end{scope}
\begin{scope}[canvas is xy plane at z=1]
 \draw[LUH-gray] (0,0) grid (13,11);
\end{scope}
\draw[->] (-2,0,0)--++(1,0,0) node[above] {1};
\draw[->] (-2,0,0)--++(0,1,0) node[above] {2};
\draw[->] (-2,0,0)--++(0,0,-1) node[above] {3};
\end{tikzpicture}
 \caption{Example mesh with $\p=(3,2,1)$ and $(N_1,N_2,N_3)=(17,13,4)$ for which the \T-junction extensions are investigated in \cref{as_tsplines::fig::demo}. The figure shows only the active region $\AR=[2,15]\times[1,12]\times[1,3]$.}
 \label{as_tsplines::fig::demo-3dmesh}
\end{figure}

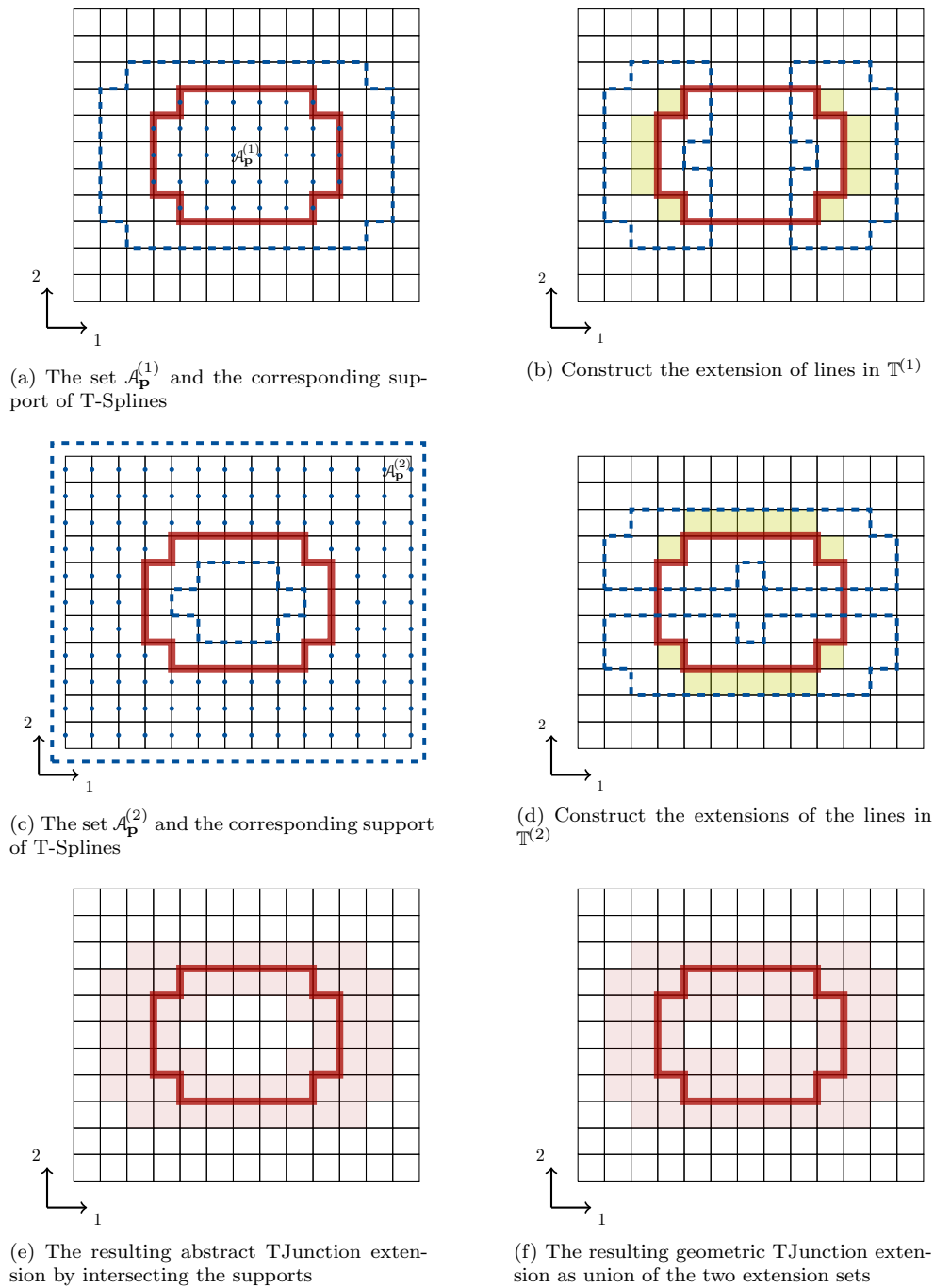
\begin{figure}[b!]
\subfloat[The set $\anchorsone$ and the corresponding support of T-Splines]{ \label{as_tsplines::fig::demo::anchorsone}
\begin{tikzpicture}[scale=0.375, every node/.style={scale=0.7}, baseline=0]
\foreach \x in {0, 1, ..., 12}{
  \foreach \y in {0, 1, ..., 10}{
    \draw (\x, \y) -- (\x + 1, \y) -- (\x + 1, \y + 1) -- (\x, \y + 1) -- (\x, \y); 
  }

}

\draw[thick, ->] (-1, -1) -- (0.5, -1) node[below right] {$1$}; 
\draw[thick, ->] (-1, -1) -- (-1, 0.5) node[above left ] {$2$};

\draw[line width = 3pt, LUH-red, opacity = 0.75] 
      ( 3, 4) -- ( 4, 4) -- ( 4, 3) -- ( 9, 3) -- ( 9, 4) -- (10, 4)
       -- (10, 7) -- ( 9, 7) -- ( 9, 8) -- ( 4, 8) -- ( 4, 7) -- ( 3, 7) -- cycle;

\draw[line width = 1.5pt, dashed, color = LUH-blue] 
      ( 1, 3) -- ( 2, 3) -- ( 2, 2) -- (11, 2) -- (11, 3) -- (12, 3)
       -- (12, 8) -- (11, 8) -- (11, 9) -- ( 2, 9) -- (2, 8) -- ( 1, 8) -- cycle;
       
\foreach \x in {0, 1, ..., 13}{
  \ifthenelse{\x=3 \OR \x=10}{
      \foreach \y in {4, 5, 6}{
        \draw[color=LUH-blue, fill=LUH-blue] (\x, \y + 0.5) circle (2pt); 
      }
    }{
    \ifthenelse{\x > 3 \AND \x < 10}{
      \foreach \y in {3, 4, 5, 6, 7}{
        \draw[color=LUH-blue, fill=LUH-blue] (\x, \y + 0.5) circle (2pt); 
      }
    }
  }
}
\node at ( 6.5, 5.5) {{$\anchorsone$}}; 
\end{tikzpicture}
} \hfill
\subfloat[Construct the extension of lines in $\tjunctionsone$]{ \label{as_tsplines::fig::demo::gtjv}
\begin{tikzpicture}[scale=0.375, every node/.style={scale=0.6}, baseline=0]
\foreach \x in {0, 1, ..., 12}{
  \foreach \y in {0, 1, ..., 10}{
    \draw (\x, \y) -- (\x + 1, \y) -- (\x + 1, \y + 1) -- (\x, \y + 1) -- (\x, \y); 
  }
}

\draw[fill=LUH-green, opacity = 0.3] ( 2, 4) rectangle ( 3, 7);
\draw[fill=LUH-green, opacity = 0.3] (10, 4) rectangle (11, 7);

\draw[fill=LUH-green, opacity = 0.3] ( 3, 3) rectangle ( 4, 4);
\draw[fill=LUH-green, opacity = 0.3] ( 3, 7) rectangle ( 4, 8);
\draw[fill=LUH-green, opacity = 0.3] ( 9, 3) rectangle (10, 4);
\draw[fill=LUH-green, opacity = 0.3] ( 9, 7) rectangle (10, 8);

\draw[thick, ->] (-1, -1) -- (0.5, -1) node[below right] {$1$}; 
\draw[thick, ->] (-1, -1) -- (-1, 0.5) node[above left ] {$2$}; 

\draw[line width = 1.5pt, dashed, color = LUH-blue] 
      ( 1, 3) -- ( 2, 3) -- ( 2, 2) -- ( 5, 2) -- ( 5, 5) -- ( 4, 5)
       -- ( 4, 6) -- ( 5, 6) -- ( 5, 9) -- (2, 9)  -- (2, 8) -- (1, 8) -- cycle; 
       
\draw[line width = 1.5pt, dashed, color = LUH-blue] 
      ( 8, 2) -- (11, 2) -- (11, 3) -- (12, 3) -- (12, 8) -- (11, 8)
       -- (11, 9) -- ( 8, 9) -- ( 8, 6) -- ( 9, 6) -- ( 9, 5) -- ( 8, 5) -- cycle;

\draw[line width = 3pt, LUH-red, opacity = 0.75] 
      ( 3, 4) -- ( 4, 4) -- ( 4, 3) -- ( 9, 3) -- ( 9, 4) -- (10, 4)
       -- (10, 7) -- ( 9, 7) -- ( 9, 8) -- ( 4, 8) -- ( 4, 7) -- ( 3, 7) -- cycle;

\end{tikzpicture}
}

\subfloat[The set $\anchorstwo$ and the corresponding support of T-Splines]{\label{as_tsplines::fig::demo::anchorstwo}
\begin{tikzpicture}[scale=0.375, every node/.style={scale=0.7}, baseline=0]
\foreach \x in {0, 1, ..., 12}{
  \foreach \y in {0, 1, ..., 10}{
    \draw (\x, \y) -- (\x + 1, \y) -- (\x + 1, \y + 1) -- (\x, \y + 1) -- (\x, \y); 
  }

}

\draw[thick, ->] (-1, -1) -- (0.5, -1) node[below right] {$1$}; 
\draw[thick, ->] (-1, -1) -- (-1, 0.5) node[above left ] {$2$}; 

\draw[line width = 3pt, LUH-red, opacity = 0.75] 
      ( 3, 4) -- ( 4, 4) -- ( 4, 3) -- ( 9, 3) -- ( 9, 4) -- (10, 4)
       -- (10, 7) -- ( 9, 7) -- ( 9, 8) -- ( 4, 8) -- ( 4, 7) -- ( 3, 7) -- cycle;
       
\draw[line width = 1.5pt, dashed, color = LUH-blue] 
      (-0.5, -0.5) -- (13.5, -0.5) -- (13.5, 11.5) -- (-0.5, 11.5) -- cycle; 
\draw[line width = 1.5pt, dashed, color = LUH-blue] 
      ( 4, 5) -- ( 5, 5) -- ( 5, 4) -- ( 8, 4) -- ( 8, 5) -- ( 9, 5)
       -- ( 9, 6) -- ( 8, 6) -- ( 8, 7) -- ( 5, 7) -- ( 5, 6) -- ( 4, 6) -- cycle;

\foreach \x in {0, 1, ..., 13}{
  \ifthenelse{\x=3 \OR \x=10}{
      \foreach \y in {0, 1, 2, 3, 7, 8, 9, 10}{
        \draw[LUH-blue, fill=LUH-blue] (\x, \y + 0.5) circle (2pt); 
      }
    }{
    \ifthenelse{\x > 3 \AND \x < 10}{
      \foreach \y in {0, 1, 2,  8, 9, 10}{
        \draw[LUH-blue, fill=LUH-blue] (\x, \y + 0.5) circle (2pt); 
      }
    }{
      \foreach \y in {0, 1, ..., 10}{
        \draw[LUH-blue, fill=LUH-blue] (\x, \y + 0.5) circle (2pt);
      }
    }
  }
}
\node at (12.5, 10.5){{$\anchorstwo$}};
\end{tikzpicture}
}\hfill
\subfloat[Construct the extensions of the lines in $\tjunctionstwo$]{ \label{as_tsplines::fig::demo::gtjh}
\begin{tikzpicture}[scale=0.375, every node/.style={scale=0.6}, baseline=0]
\foreach \x in {0, 1, ..., 12}{
  \foreach \y in {0, 1, ..., 10}{
    \draw (\x, \y) -- (\x + 1, \y) -- (\x + 1, \y + 1) -- (\x, \y + 1) -- (\x, \y); 
  }
}


\draw[fill=LUH-green, opacity = 0.3] ( 4, 2) rectangle ( 9, 3);
\draw[fill=LUH-green, opacity = 0.3] ( 4, 8) rectangle ( 9, 9);

\draw[fill=LUH-green, opacity = 0.3] ( 3, 3) rectangle ( 4, 4);
\draw[fill=LUH-green, opacity = 0.3] ( 3, 7) rectangle ( 4, 8);
\draw[fill=LUH-green, opacity = 0.3] ( 9, 3) rectangle (10, 4);
\draw[fill=LUH-green, opacity = 0.3] ( 9, 7) rectangle (10, 8);

\draw[thick, ->] (-1, -1) -- (0.5, -1) node[below right] {$1$}; 
\draw[thick, ->] (-1, -1) -- (-1, 0.5) node[above left ] {$2$}; 

\draw[line width = 3pt, LUH-red, opacity = 0.75] 
      ( 3, 4) -- ( 4, 4) -- ( 4, 3) -- ( 9, 3) -- ( 9, 4) -- (10, 4)
       -- (10, 7) -- ( 9, 7) -- ( 9, 8) -- ( 4, 8) -- ( 4, 7) -- ( 3, 7) -- cycle;

\draw[line width = 1.5pt, dashed, color = LUH-blue] 
      ( 1, 3) -- ( 2, 3) -- ( 2, 2) -- (11, 2) -- (11, 3) -- (12, 3)
       -- (12, 5) -- ( 7, 5) -- ( 7, 4) -- ( 6, 4) -- (6, 5) -- ( 1, 5) -- cycle;
\draw[line width = 1.5pt, dashed, color = LUH-blue] 
      ( 1, 6) -- ( 6, 6) -- ( 6, 7) -- ( 7, 7) -- ( 7, 6) -- (12, 6)
       -- (12, 8) -- (11, 8) -- (11, 9) -- ( 2, 9) -- (2, 8) -- ( 1, 8) -- cycle;
       
\end{tikzpicture}
}

\subfloat[The resulting abstract TJunction extension by intersecting the supports]{ \label{as_tsplines::fig::demo::atj}
\begin{tikzpicture}[scale=0.375, every node/.style={scale=0.7}, baseline=0]
\foreach \x in {0, 1, ..., 12}{
  \foreach \y in {0, 1, ..., 10}{
    \draw (\x, \y) -- (\x + 1, \y) -- (\x + 1, \y + 1) -- (\x, \y + 1) -- (\x, \y); 
  }
}

\draw[thick, ->] (-1, -1) -- (0.5, -1) node[below right] {$1$}; 
\draw[thick, ->] (-1, -1) -- (-1, 0.5) node[above left ] {$2$};

\draw[fill = LUH-red, fill opacity = 0.1, draw = none] ( 1, 3) rectangle ( 4, 8);
\draw[fill = LUH-red, fill opacity = 0.1, draw = none] ( 2, 2) rectangle (11, 3);
\draw[fill = LUH-red, fill opacity = 0.1, draw = none] ( 2, 8) rectangle (11, 9);
\draw[fill = LUH-red, fill opacity = 0.1, draw = none] ( 9, 3) rectangle (12, 8);
\draw[fill = LUH-red, fill opacity = 0.1, draw = none] ( 4, 3) rectangle ( 9, 4);
\draw[fill = LUH-red, fill opacity = 0.1, draw = none] ( 4, 7) rectangle ( 9, 8);

\draw[fill = LUH-red, fill opacity = 0.1, draw = none] ( 4, 4) rectangle ( 5, 5);
\draw[fill = LUH-red, fill opacity = 0.1, draw = none] ( 4, 6) rectangle ( 5, 7);
\draw[fill = LUH-red, fill opacity = 0.1, draw = none] ( 8, 4) rectangle ( 9, 5);
\draw[fill = LUH-red, fill opacity = 0.1, draw = none] ( 8, 6) rectangle ( 9, 7);

\draw[line width = 3pt, LUH-red, opacity = 0.75] 
      ( 3, 4) -- ( 4, 4) -- ( 4, 3) -- ( 9, 3) -- ( 9, 4) -- (10, 4)
       -- (10, 7) -- ( 9, 7) -- ( 9, 8) -- ( 4, 8) -- ( 4, 7) -- ( 3, 7) -- cycle;
\end{tikzpicture}
}\hfill
\subfloat[The resulting geometric TJunction extension as union of the two extension sets]{ \label{as_tsplines::fig::demo::gtj}
\begin{tikzpicture}[scale=0.375, every node/.style={scale=0.7}, baseline=0]
\foreach \x in {0, 1, ..., 12}{
  \foreach \y in {0, 1, ..., 10}{
    \draw (\x, \y) -- (\x + 1, \y) -- (\x + 1, \y + 1) -- (\x, \y + 1) -- (\x, \y); 
  }
}

\draw[thick, ->] (-1, -1) -- (0.5, -1) node[below right] {$1$}; 
\draw[thick, ->] (-1, -1) -- (-1, 0.5) node[above left ] {$2$};

\draw[fill = LUH-red, fill opacity = 0.1, draw = none] ( 1, 3) rectangle ( 4, 8);
\draw[fill = LUH-red, fill opacity = 0.1, draw = none] ( 2, 2) rectangle (11, 3);
\draw[fill = LUH-red, fill opacity = 0.1, draw = none] ( 2, 8) rectangle (11, 9);
\draw[fill = LUH-red, fill opacity = 0.1, draw = none] ( 9, 3) rectangle (12, 8);
\draw[fill = LUH-red, fill opacity = 0.1, draw = none] ( 4, 3) rectangle ( 9, 4);
\draw[fill = LUH-red, fill opacity = 0.1, draw = none] ( 4, 7) rectangle ( 9, 8);

\draw[fill = LUH-red, fill opacity = 0.1, draw = none] ( 4, 4) rectangle ( 6, 5);
\draw[fill = LUH-red, fill opacity = 0.1, draw = none] ( 4, 6) rectangle ( 6, 7);
\draw[fill = LUH-red, fill opacity = 0.1, draw = none] ( 7, 4) rectangle ( 9, 5);
\draw[fill = LUH-red, fill opacity = 0.1, draw = none] ( 7, 6) rectangle ( 9, 7);

\draw[line width = 3pt, LUH-red, opacity = 0.75] 
      ( 3, 4) -- ( 4, 4) -- ( 4, 3) -- ( 9, 3) -- ( 9, 4) -- (10, 4)
       -- (10, 7) -- ( 9, 7) -- ( 9, 8) -- ( 4, 8) -- ( 4, 7) -- ( 3, 7) -- cycle;
                                           
\end{tikzpicture}
}
\caption{Step-by-step construction of abstract and geometric \T-junction extensions.} \label{as_tsplines::fig::demo}
\end{figure}

We close this section with two examples illustrated in \cref{as_tsplines::fig::demo-3dmesh,as_tsplines::fig::demo,as_tsplines::fig::atj vs gtj}. 
We consider the 3D mesh visualized in \cref{as_tsplines::fig::demo-3dmesh},
with polynomial degrees $\p = (3,2,1)$, and we construct the \T-junction extensions of the hanging interfaces via both approaches, the abstract and the geometric one. 

The sketches in \cref{as_tsplines::fig::demo} show the slice $\slice_3(2)$, 
where the thick red line marks $3$-orthogonal \T-junctions contained in the slice. The faces inside the red line are part of the skeleton, the faces outside the red line are not. In other words, the faces surrounded by the red line were generated by a bisection orthogonal to the third direction. In \cref{as_tsplines::fig::demo::anchorsone,as_tsplines::fig::demo::anchorstwo,as_tsplines::fig::demo::atj} the scheme of constructing the abstract \T-junction extension is displayed, while \cref{as_tsplines::fig::demo::gtjv,as_tsplines::fig::demo::gtjh,as_tsplines::fig::demo::gtj} shows  the procedure for geometric \T-junction extensions.

For the abstract \T-junction extensions, we consider the two sets 
\begin{equation}
  \anchorsone = \{ \anchor\in\anchors \colon n\in\globind{3} \}, \quad \anchorstwo = \{ \anchor\in\anchors \colon n\not\in\globind{3} \}.
\end{equation}
From the polynomial degree $\p=(3,2,1)$, we get $\kappa = \{ 1, 3 \}$ in \eqref{nd_tsplines::eq::anchor_def} and hence the anchors are the edges in the second direction. 
The projection of $\anchorsone$ (resp. $\anchorstwo$) on the slice $\slice_3(2)$ is indicated in \cref{as_tsplines::fig::demo::anchorsone} (resp. \cref{as_tsplines::fig::demo::anchorstwo}) by solid dots on the lines,
meaning that each marked line corresponds to three (resp. two) anchors with identical first and second components.
Following \cref{def:AAS}, we indicate $\bigcup_{\anchor\in\anchorsone}\suppindBA$ by dashed lines, and 
$\bigcup_{\anchor\in\anchorstwo}\suppindBA$ by dashed lines. 
The intersection of these sets yields the \T-junction extension highlighted in \cref{as_tsplines::fig::demo::atj}, 
which contains faces in the center region and intersects with cells in the outer region.
Note that the spline supports far away from the \T-junctions 
contribute no information in this construction. 
It is hence sufficient to consider only anchors near \T-junctions when checking for $\AAS$ in practice.

For the geometric \T-junction extensions, we consider the two types
\begin{align}
  \tjunctionsone &= \{ \Tjunc \in \Hdim{1} \mid \text{valence}(\Tjunc) < 4, \,\Tjunc\not\subset\partial\inddomain, \, \pdir(\Tjunc) = 1, \, \odir(\Tjunc)=3 \}, \\
  \tjunctionstwo &= \{ \Tjunc \in \Hdim{1} \mid \text{valence}(\Tjunc) < 4, \,\Tjunc\not\subset\partial\inddomain, \, \pdir(\Tjunc) = 2, \, \odir(\Tjunc)=3 \}.
\end{align}
In \cref{as_tsplines::fig::demo}, the set $\tjunctionsone$ represents the vertical red edges, and $\tjunctionstwo$ represents the  horizontal red edges. 
We build the geometric \T-junction extensions separately for the interfaces in $\tjunctionsone$ and for the interfaces from $\tjunctionstwo$. The intersection of the associated cells with $\slice_3(2)$ are highlighted in lime in \cref{as_tsplines::fig::demo::gtjv,as_tsplines::fig::demo::gtjh}.

For any interface $\Tjunc=\Tjunc_1\times\Tjunc_2\times\Tjunc_3\in\tjunctionsone$, we have $\pdir(\Tjunc) = 1$, hence, the knot vectors are constructed as follows, recall also \cref{def:GAS}. From $\globind[\Tjunc]{1}$ we select the $p_1 + 1 = 4$ indices, such that $\Tjunc_1$ is either the third or the second entry, i.e. on the left side in \cref{as_tsplines::fig::demo::gtjv} the index of $\Tjunc_1$ is the third entry in $\locextind{1}$ and on the right side, the index of $\Tjunc_2$ is the second entry in $\locextind{1}$. 
Since $\odir(\Tjunc)=3$, we have $\locextind3=\{2\}$ for all \T-junctions in this example.
We construct the knot vector $\locextind 2$ to be symmetric around $\Tjunc$, i.e. it has $p_2 + 2 = 4$ consecutive entries from $\globind[\Tjunc]2$, where the indices of $\Tjunc_2$ are in the middle. 

For any interface $\Tjunc=\Tjunc_1\times\Tjunc_2\times\Tjunc_3\in\tjunctionstwo$, we have $\pdir(\Tjunc) = 2$. 
Since $p_2 = 2$ is even, $\locextind[\Tjunc]2$ is composed of 
$p_2 + 1=3$ indices from $\globind[\Tjunc]{2}$ such that $\Tjunc_2$ is the second entry.
Thus, the local knot vector $\locextind2$ is symmetric around $\Tjunc_2$.
Further, we build $\locextind1$ from $\globind[\Tjunc]1$ by choosing the $p_1 + 3 = 6$ ($p_1$ is odd) consecutive indices, such that the bounds of $\Tjunc_1$ are the middle entries. 

The unions of these \T-junction extensions are shown in \cref{as_tsplines::fig::demo::gtjv,as_tsplines::fig::demo::gtjh} by dashed lines, and the union of both sets gives the \T-junction extension $\GTJ$ highlighted in \cref{as_tsplines::fig::demo::gtj}. 
Note that the geometric \T-junction is slightly larger than the abstract \T-junction extension. 

\begin{figure}[b!]\center
\begin{minipage}[c]{0.6\linewidth}
\subfloat[$p_1$ odd, $\ATJ_2 = \nothing$]{\label{as_tsplines::fig::atj vs gtj 1}
\begin{tikzpicture}
\draw ( 0, 0) rectangle ( 3, 2); 
\draw ( 1, 0) rectangle ( 2, 2); 
\draw ( 0, 1) -- ( 1, 1); 
\draw ( 2, 1) -- ( 3, 1);

\foreach \x in {0, 1, 2, 3}{
  \foreach \y in {0, 1, 2}{
    \draw[fill = LUH-red, color = LUH-red] ( \x, \y) circle(2pt);
  }
}
\end{tikzpicture}
}\hspace{2em}
\subfloat[$p_1=2$, $\ATJ_2 \neq \nothing$]{\label{as_tsplines::fig::atj vs gtj 2}
\begin{tikzpicture}
\draw ( 0, 0) rectangle ( 3, 2); 
\draw ( 1, 0) rectangle ( 2, 2); 
\draw ( 0, 1) -- ( 1, 1); 
\draw ( 2, 1) -- ( 3, 1);
\draw[line width = 1.5pt, LUH-blue, dashed] ( 0, 1) -- ( 3, 1);

\foreach \x in {0, 1, 2}{
  \ifthenelse{\x = 1}{
    \foreach \y in {0, 2}{
      \draw[fill = LUH-blue, color = LUH-blue] ( \x + 0.5, \y) circle(2pt);
    }
  }{
    \foreach \y in {0, 1, 2}{
      \draw[fill = LUH-red, color = LUH-red] ( \x + 0.5, \y) circle(2pt);
    }
  }
}
\end{tikzpicture}
}

\subfloat[$p_1 = 1$, $\GTJ_2\neq\nothing$]{\label{as_tsplines::fig::atj vs gtj 4}
\begin{tikzpicture}
\draw[color = black, fill = LUH-green, opacity = 0.3] ( 1, 0) rectangle ( 2, 2); 
\draw ( 0, 0) rectangle ( 3, 2); 
\draw ( 0, 1) -- ( 1, 1); 
\draw ( 2, 1) -- ( 3, 1);
\draw ( 1, 0) -- ( 1, 2); 
\draw ( 2, 0) -- ( 2, 2);

\draw[line width = 1.5pt, LUH-blue, dashed] ( 1, 1) -- ( 2, 1);
\end{tikzpicture}
}\hspace{2em}
\subfloat[$p_1 = 2$, $\GTJ_2\neq\nothing$]{\label{as_tsplines::fig::atj vs gtj 5}
\begin{tikzpicture}
\draw[color = black, fill = LUH-green, opacity = 0.3] ( 1, 0) rectangle ( 2, 2); 
\draw ( 0, 0) rectangle ( 3, 2); 
\draw ( 0, 1) -- ( 1, 1); 
\draw ( 2, 1) -- ( 3, 1);
\draw ( 1, 0) -- ( 1, 2); 
\draw ( 2, 0) -- ( 2, 2);

\draw[line width = 1.5pt, LUH-blue, dashed] ( 0, 1) -- ( 3, 1);
\end{tikzpicture}
}
\end{minipage}
\begin{minipage}{0.325\linewidth}
\subfloat[$\p = (3,3)$, $\AAS$ but not $\SGAS$.]{\label{as_tsplines::fig::atj vs gtj 6}
\begin{tikzpicture}[scale = 0.9, every node/.style={scale=0.7}]
\draw ( 0, 0) rectangle ( 3, 3); 
\draw ( 0, 2) -- ( 3, 2);
\draw ( 0, 1) -- ( 1, 1); 
\draw ( 2, 1) -- ( 3, 1);
\draw ( 1, 0) -- ( 1, 3); 
\draw ( 2, 0) -- ( 2, 3);
\draw ( 0.5, 2) -- ( 0.5, 3);

\foreach \x in {0, 1, 2, 3}{
  \foreach \y in {0, 1, 2, 3}{
    \draw[fill = LUH-red, color = LUH-red] ( \x, \y) circle(2pt);
  }
}
\draw[fill = LUH-red, color = LUH-red] (0.5, 2) circle(2pt);
\draw[fill = LUH-red, color = LUH-red] (0.5, 3) circle(2pt);

\node[above] at (0, 3.1) {$m-2$};
\node[above] at (0.5, 3.35) {$m-1$};
\node[above] at (1, 3.1) {$m$};
\node[above] at (2, 3.1) {$m+1$};
\node[above] at (3, 3.1) {$m+2$};

\node[left] at (-0.1, 0) {$n-1$};
\node[left] at (-0.1, 1) {$n  $};
\node[left] at (-0.1, 2) {$n+1$};
\node[left] at (-0.1, 3) {$n+2$};

\draw[line width = 1.5pt, loosely dashed, LUH-blue] (0.5, 0) -- (0.5, 3);
\draw[line width = 1.5pt, loosely dotted, LUH-blue] ( 0 , 1) -- ( 3, 1);
\end{tikzpicture}
}
\end{minipage}
\caption{Opposing hanging interfaces}\label{as_tsplines::fig::atj vs gtj}
\end{figure}
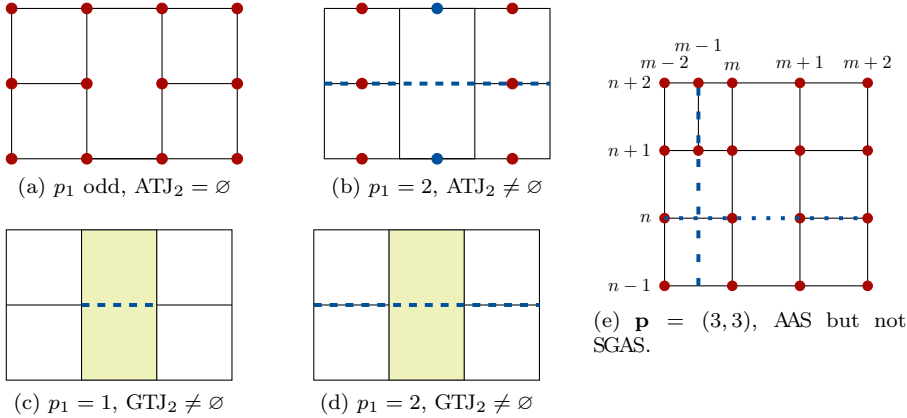

The second example is the 2D mesh shown in \cref{as_tsplines::fig::atj vs gtj 1,as_tsplines::fig::atj vs gtj 2,as_tsplines::fig::atj vs gtj 4,as_tsplines::fig::atj vs gtj 5}. 
The hanging interfaces are the two opposing hanging vertices $\Tjunc^{(1)}= \{m\}\times\{n\}$ and $\Tjunc^{(2)}=\{m+1\}\times\{n\}$. We will demonstrate the behavior of the two introduced \T-junction extensions for different degrees.  Let $p_2$ be odd in any case. 

In \cref{as_tsplines::fig::atj vs gtj 1}, $p_1$ and $p_2$ are odd. The anchors are marked by red bullets. In this setting, all anchors $\anchor\in\anchors$ have the index $n$ in their knot vector, i.e. $n\in\globind 2$ for all $\anchor \in \anchors$. Thus, the abstract \T-junction extension is empty here. Note that $\ATJ_2 = \nothing$ if $p_1$ is odd. 

In \cref{as_tsplines::fig::atj vs gtj 2}, $p_1$ is even and $p_2$ is odd, and hence the anchors are the horizontal lines. 
In this setting, we have two anchors $\anchorone = (m, m+1) \times \{ n - 1 \}$ and $\anchortwo = (m, m+1) \times \{ n + 1 \}$, 
which are the bottom center and the top center anchor in \cref{as_tsplines::fig::atj vs gtj 2},
with $n\notin\globind[\anchorone]2=\globind[\anchortwo]2$, while $n\in\globind2$ for the remaining six anchors.
Thus the abstract \T-junction extension will not be empty. The extension is drawn as a dashed line. 

In contrast to the case from \cref{as_tsplines::fig::atj vs gtj 1}, we see in \cref{as_tsplines::fig::atj vs gtj 4} that the geometric \T-junction extension $\GTJ(\Tjunc^{(1)})$ is not empty for any $p_1$
as it is constructed with $p_1 + 1$ consecutive indices from the global knot vector $\globind[\Tjunc^{(1)}]1$. The extension for $p_1=1$ is given by $\GTJ(\Tjunc^{(1)}) = \GTJ(\Tjunc^{(2)}) = [m, m+1] \times \{n\}$. 

Furthermore, we get for the case $p_1=2$ the extension shown in \cref{as_tsplines::fig::atj vs gtj 5}, which coincides with the abstract \T-junction extension shown in \cref{as_tsplines::fig::atj vs gtj 2}.  

Both examples indicate that $\AAS$ does not imply $\SGAS$ in general, since there may be an intersection of \T-junction extensions in points that are contained in a geometric, but not in an abstract \T-junction extension.
Consider for example \cref{as_tsplines::fig::atj vs gtj 6}, where $\p = (3,3)$ and the anchors are again the vertices of the mesh. As before, there is $\ATJ_2=\nothing$, whereas the \T-junction $\Tjunc^{(3)} =\{m-1\}\times\{n+1\}$ yields $\ATJ_1(m-1) = \{m-1\} \times [n-1, n+2] = \GTJ(\Tjunc^{(3)})$. We again have $[m-2, m+2]\times \{ n \} \subset \GTJ$, and we see $\GTJ_1 \cap \GTJ_2 = \{ m-1 \}\times \{ n \} \neq \nothing$, as well as $\ATJ_1 \cap \ATJ_2 = \nothing$. The extensions $\GTJ_1$ and $\ATJ_1$ coincide and are drawn with dashed lines, and the extension $\GTJ_2$ is drawn with dotted lines in \cref{as_tsplines::fig::atj vs gtj 6}.

Lastly, we give an example to point out the differences between $\WGAS$ and $\SGAS$ meshes. For $\WGAS$ meshes, we consider only intersecting \T-junction extensions of \T-junctions with different pointing \emph{and} orthogonal direction, whereas $\SGAS$ only consider extensions of \T-junctions with different orthogonal directions.  For the two hanging interfaces $\Tjunc^{(1)} = \{m+1\} \times (n,n+2) \times \{ r+1 \} $ and $\Tjunc^{(2)} = \{m+2\} \times \{ n+1 \} \times (r, r+2)$ from \cref{as_tsplines::fig:: wgas and wdc but not sgas and not sdc} we have $\pdir(\Tjunc^{(1)}) = \pdir(\Tjunc^{(2)}) = 1$ and $\odir(\Tjunc^{(1)}) = 3 \neq 2 =\odir(\Tjunc^{(2)})$. For any degrees $p_1,p_2,p_3 \geq 0$, the intersection of the two geometric extensions will not be empty, i.e. $\GTJ(\Tjunc^{(1)}) \cap \GTJ(\Tjunc^{(2)}) \neq\nothing$, hence the mesh will not be $\SGAS$. But since $\pdir(\Tjunc^{(1)}) = \pdir(\Tjunc^{(1)})$, the intersection is not considered for the weak criterion of geometric analysis-suitability. Thus, the mesh is $\WGAS$ but not $\SGAS$. However, we conjecture that the generated splines are linearly independent, see \cref{thm: wgas implies wdc,dc_splines::prop::wdc properties}. 

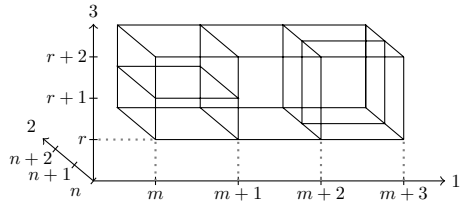
\begin{figure}[b!]
 \centering
 \begin{tikzpicture}[scale=1.1, every node/.style={scale=0.7}]
  \draw[->] (-.75,-.5)--(3.5,-.5) node[right]{1};
  \draw (0,-.45)--++(0,-.1) node[below, inner sep=1pt]{$\phantom1m\phantom1$};
  \draw (1,-.45)--++(0,-.1) node[below, inner sep=1pt]{$m+1$};
  \draw (2,-.45)--++(0,-.1) node[below, inner sep=1pt]{$m+2$};
  \draw (3,-.45)--++(0,-.1) node[below, inner sep=1pt]{$m+3$};
  \draw (-.7,0)--++(-.1,0) node[left, inner sep=1pt]{$r$};
  \draw (-.7,.5)--++(-.1,0) node[left, inner sep=1pt]{$r+1$};
  \draw (-.7,1)--++(-.1,0) node[left, inner sep=1pt]{$r+2$};
 \draw[->] (-.75,-.5)--(-.75,1.4) node[above]{3};
 \draw[->] (-.75,-.5)--++(140:.8) node[above left]{2};
  \draw (-.75,-.5) ++(140:0) ++(50:.05)--++(230:.1) node[below left,inner sep=0pt]{$\phantom1n\phantom1$};
  \draw (-.75,-.5) ++(140:.3) ++(50:.05)--++(230:.1) node[below left,inner sep=0pt]{$n+1$};
  \draw (-.75,-.5) ++(140:.6) ++(50:.05)--++(230:.1) node[below left,inner sep=0pt]{$n+2$};
 \draw[gray, thick, dotted] (0,-.5)|-(-.75,0) (1,-.5)--(1,0) (2,-.5)--(2,0) (3,-.5)--(3,0);
 \draw (0,0) grid (3,1) (0,0) --++(140:.6) coordinate (00) --++(1,0) coordinate (10) --++(1,0) coordinate (20) --++(1,0) coordinate (30) --++(0,1) coordinate (31) --++(-1,0) coordinate (21) --++(-1,0) coordinate (11) --++(-1,0) coordinate (01) --(00) (0,1)--(01) (1,1)--(11)--(10)--(1,0) (2,1)--(21)--(20)--(2,0) (3,1)--(31)--(30)--(3,0)
 (0,.5)--++(140:.6) coordinate (05) --++(1,0) coordinate (15) --(1,.5)--cycle
 (2,0)++(140:.3) coordinate (20i) --++(1,0) coordinate (30i) --++(0,1) coordinate(31i) --++(-1,0) coordinate (21i)--cycle ;
  \end{tikzpicture}
 \caption{A mesh that is $\WGAS$ and $\WDC$, but neither $\SGAS$ nor $\SDC$, for any polynomial degree. }
 \label{as_tsplines::fig:: wgas and wdc but not sgas and not sdc}
\end{figure}

\section{Dual-Compatibility}\label{sec: DC}
We recall two versions of dual-compatibility, a \linebreak strong \cite[Definition~5.3.12]{Morgenstern17} and a weak one \cite[Definition~7.2]{VeigaBuffaEtAl14}.
Throughout this paper, we suppose that knot vectors are non-decreasing.

\begin{definition}[Overlapping knot vectors and splines]\label{df: overlap}
We say that two knot vectors $\Xi^{(1)}=\oset{\xi_1^{(1)},\dots,\xi_{n_1}^{(1)}}$ and $\Xi^{(2)}=\oset{\xi_1^{(2)},\dots,\xi_{n_2}^{(2)}}$ overlap, if there is a knot vector $\Xi=\oset{\xi_1,\dots,\xi_n}$, $n\ge \max\{n_1, n_2\}$, and numbers $k^{(1)},k^{(2)}\in\N_0$ such that
\begin{equation}
  \begin{aligned}
    \forall i = 1,\dots,n_1 \colon\quad \xi_i^{(1)} = \xi_{i+k^{(1)}}, \\
    \forall i = 1,\dots,n_2 \colon\quad \xi_i^{(2)} = \xi_{i+k^{(2)}}.
   \end{aligned}
   \label{dc_splines::eq::dc_cond}
\end{equation}
We write $\Xi^{(1)} \overlaps \Xi^{(2)}$. 

Further, for two anchors $\anchorone, \anchortwo\in \anchors$ we say that the splines $B_{\anchorone}$ and $B_{\anchortwo}$ overlap if the local knot vectors $\locind[\anchorone]{k}$ and $\locind[\anchortwo]{k}$ overlap for each $k$, and we write $B_{\anchorone} \overlaps B_{\anchortwo}$.

We say that they \emph{weakly partially overlap} if there is an $\ell \in \{ 1, \dots, d \}$ such that the knot vectors $\locind[\anchorone]{\ell}$ and $\locind[\anchortwo]{\ell}$ differ and overlap, and we write $B_{\anchorone} \partialoverlaps^\weak B_{\anchortwo}$. We say they strongly partially overlap, if $\supp(B_{\anchorone})\cap\supp(B_{\anchortwo})=\nothing$ or if $\locind[\anchorone]{k}$ and $\locind[\anchortwo]{k}$ overlap for at least $d{-}1$ directions $k$. We write $B_{\anchorone} \partialoverlaps B_{\anchortwo}$. 
\end{definition}

\begin{definition}[Dual-Compatibility]
Let $\splines = \{B_i\}$ be a set of splines. We say that $\splines$ is weakly (resp. strongly) Dual-Compatible ($\WDC$ resp.\ $\SDC$), if $B_i \partialoverlaps^\weak B_j$ (resp $B_i \partialoverlaps B_j$), for $i\neq j$. 
  Further, we say that $\mesh$ is $\WDC$ (resp. $\SDC$), if the generated spline space is $\WDC$ (resp. $\SDC$), and we write $\mesh\in\WDC$ (resp. $\mesh\in\SDC$).
\end{definition}
\begin{remark}
$\SDC$ is sufficient for $\WDC$. This is shown as follows.
Let $\anchorone,\anchortwo\in\anchors$ be two anchors with $\anchorone\ne\anchortwo$ and $B_{\anchorone} \partialoverlaps B_{\anchortwo}$. 

\emph{Case 1:}\enspace $\supp(B_{\anchorone})\cap \supp(B_{\anchortwo}) = \nothing$, then there is $k$ with $\conv\locind[\anchorone]k\cap\conv\locind[\anchortwo]k=\nothing$.
We choose $\Xi_k = \locind[\anchorone]k \cup \locind[\anchortwo]k$ as the global knot vector and $k^{(1)} = 1, k^{(2)} = p_k+2$ such that $\Xi_k, \locind[\anchorone]k, \locind[\anchortwo]k$ fulfill condition \eqref{dc_splines::eq::dc_cond}. 

\emph{Case 2:}\enspace $\supp(B_{\anchorone})\cap \supp(B_{\anchortwo}) \ne \nothing$ and $\locind[\anchorone]k\overlaps\locind[\anchortwo]k$ for at least $d{-}1$ directions $k$. If $\locind[\anchorone]k=\locind[\anchortwo]k$ for all these directions, then $\anchorone$ and $\anchortwo$ are equal or aligned in the remaining direction $j$ and hence share the same global knot vector $\globind[\anchorone]j=\globind[\anchortwo]j$. Hence $\anchorone=\anchortwo$ or $\locind[\anchorone]j\overlapsneq\locind[\anchortwo]j$.

In both cases, there exists an $\ell$ such that $\locind[\anchorone]\ell$ and $\locind[\anchortwo]\ell$ differ and overlap. 
This is, from $B_{\anchorone}\partialoverlaps B_{\anchortwo}$ follows $B_{\anchorone}\partialoverlaps^\weak B_{\anchortwo}$, and hence $\SDC$ implies $\WDC$.
\end{remark}

An example for a mesh that is $\WDC$ but not $\SDC$  is again the mesh from \cref{as_tsplines::fig:: wgas and wdc but not sgas and not sdc}. 
The 1-orthogonal Skeleton $\skel_1 = \slice_1(m)\cup\slice_1(m+1)\cup\slice_1(m+2)\cup\slice_1(m+3)$ consists of slices of the whole domain. Hence all anchors have the same global knot vector $\globind1=\oset{m,m+1,m+2,m+3}$, regardless of the polynomial degrees and corresponding anchor type. Consequently, any two anchors have overlapping local knot vectors in the first direction, i.e. $\locind[\anchorone]{1} \overlaps \locind[\anchortwo]{1}$. Further, any two anchors that coincide in their first component also coincide in their global knot vectors in the second and third direction, because these global knot vectors depend only on the first component $\anchor_1$, see \cref{table::globind configurations for the wdc sdc example}. Together, any two anchors either overlap and are different in the first direction, or, if their first components coincide, they strongly overlap. This satisfies the $\WDC$ criterion.

\begin{table}
 \caption{Global knot vectors for all possible configurations of an anchor's first component, for the mesh from \cref{as_tsplines::fig:: wgas and wdc but not sgas and not sdc}.
The always contained values
$0 \,,\dots,\lfloor \frac{p_j + 1}{2}\rfloor \text{ and }
N_j - \lfloor \frac{p_j + 1}{2}\rfloor,\dots,N_j$ are hidden by dots.}
\label{table::globind configurations for the wdc sdc example}
 \centering
 \renewcommand{\arraystretch}{1.2}
 \begin{tabular}{c|cc}
 $\anchor_1$ & $\globind2$ & $\globind3$ \\
 \hline
$\{m\}$ & $\oset{ \dots,n,n+2,\dots }$  &  $\oset{ \dots,r,r+1,r+2,\dots  }$\\
$\{m+1\}$ & $\oset{\dots,n,n+2,\dots }$  &  $\oset{ \dots,r,r+1,r+2,\dots  }$\\
$\{m+2\}$ & $\oset{ \dots,n,n+1,n+2,\dots }$  &  $\oset{ \dots,r,r+2,\dots  }$\\
$\{m+3\}$ & $\oset{ \dots,n,n+1,n+2,\dots }$  &  $\oset{ \dots,r,r+2,\dots  }$\\
$(m,m+1)$ & $\oset{ \dots,n,n+2,\dots }$  &  $\oset{ \dots,r,r+1,r+2,\dots  }$\\
$(m+1,m+2)$ & $\oset{\dots,n,n+2,\dots }$  &  $\oset{ \dots,r,r+2,\dots  }$\\
$(m+2,m+3)$ & $\oset{ \dots,n,n+1,n+2,\dots }$  &  $\oset{ \dots,r,r+2,\dots  }$
 \end{tabular}
\end{table}

However, the mesh is not $\SDC$ for any degree $\p=(p_1,p_2,p_3)$ with $p_1>0$. Consider two anchors $\anchorone=\anchorone_1\times\anchorone_2\times\anchorone_3$ and $\anchortwo=\anchortwo_1\times\anchortwo_2\times\anchortwo_3$,
with 
\begin{equation}
\begin{cases}
       \anchorone_1=\{m+1\},\enspace\anchortwo_1=\{m+2\}&\text{if $p_1$ is odd,}\\
       \anchorone_1=(m,m+1),\enspace\anchortwo_1=(m+2,m+3)&\text{if $p_1$ is even and $>0$.}
      \end{cases}
\end{equation}
For any $p_1>0$, the supports $\suppind B_{\anchorone}$ and $\suppind B_{\anchortwo}$ have nonempty intersection, and
from \cref{table::globind configurations for the wdc sdc example}, we get for any $p_1>0$ that
\begin{alignat}{2}
  \globind[\anchorone]2 &= \oset{\dots,n,n+2,\dots },&
  \globind[\anchorone]3 &= \oset{ \dots,r,r+1,r+2,\dots  },
\\
  \globind[\anchortwo]2 &= \oset{\dots,n,n+1,n+2,\dots },\enspace&
  \globind[\anchortwo]3 &= \oset{ \dots,r,r+2,\dots  },
\end{alignat}
i.e.\ the knot vectors $\locind[\anchorone]2$ and $\locind[\anchortwo]2$ do not overlap for any $p_2\ge0$, and neither do $\locind[\anchorone]3$ and $\locind[\anchortwo]3$ for any $p_3\ge0$. Thus, $B_{\anchorone}$ and $B_{\anchortwo}$ do not strongly partially overlap, and the mesh is not $\SDC$.


Extensive studies on dual-compatible splines are already existent, see e.g. \cite{VeigaBuffaEtAl14}.  Some important properties are stated in the following proposition.

\begin{proposition}\label{dc_splines::prop::wdc properties}
Let $\splines_\p = \{B_{\anchor, \p}\}$ be a set of weakly dual-compatible splines over the set of anchors $\anchors$ with multi-degree $\p$. Then, the following holds
\begin{enumerate}
  \item There exists a set of dual-functions $\lambda_{\anchor, \p}$, s.t. $\lambda_{\anchorone,\p}(B_{\anchortwo, \p}) = \delta_{\anchorone,\anchortwo}$.
  \item The splines $B_{\anchor, \p}$ are linearly independent. If the constant function is in the spline space $\splines$, then $\sum_{\anchor\in\anchors} B_\anchor = 1$. 
  \item There exists a constant $C_\p$, s.t. the projection $\Pi_\p\colon L^2(\pardomain) \to \splines_\p$ given by
    \begin{equation}
      \Pi_\p(f)(\zeta) = \sum_{\anchor\in\anchors} \lambda_{\anchor, \p}(f) B_{\anchor, \p}(\zeta),\quad \text{for all } f\in L^2(\pardomain), \zeta\in\pardomain
    \end{equation}
    fulfills
    \begin{equation}
      \| \Pi_\p(f) \|_{L^2(\cell)} \leq C_\p \| f \|_{L^2(\cell)}, \quad \text{ for all } \cell\subset\pardomain, \text{ and }f\in L^2(\pardomain).
    \end{equation}
\end{enumerate} 
\end{proposition}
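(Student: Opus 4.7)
The plan is to follow the strategy established for tensor-product B-splines and the $\WDC$ framework from \cite{VeigaBuffaEtAl14}, with the dual functionals constructed via de Boor--Fix.

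\textbf{Step (i).} First I would build, for each direction $k\in\{1,\dots,d\}$, a sufficiently long ambient knot vector $\Xi_k$ into which every local knot vector $\locind[\anchor]k$, $\anchor\in\anchors$, embeds as an overlapping subsequence in the sense of \cref{df: overlap}. Because $\anchors$ is finite and the pairwise overlap property extends inductively to finite families, such a $\Xi_k$ exists. Writing the anchors' univariate B-splines as $B_{\locind[\anchor]k}$ on $\Xi_k$, I would then define the tensor-product de Boor--Fix functionals
\begin{equation}
  \lambda_{\anchor,\p}(f) \coloneqq \bigl(\textstyle\bigotimes_{k=1}^d \lambda_{\locind[\anchor]k}\bigr)(f),
\end{equation}
where $\lambda_{\locind[\anchor]k}$ is the classical univariate de Boor--Fix dual associated to $\locind[\anchor]k$ on $\Xi_k$. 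The classical univariate duality gives $\lambda_{\locind[\anchorone]k}(B_{\locind[\anchortwo]k})=\delta$ whenever the two knot vectors overlap in $\Xi_k$. For distinct anchors $\anchorone\neq\anchortwo$, the $\WDC$ hypothesis supplies an $\ell$ with $\locind[\anchorone]\ell\neq\locind[\anchortwo]\ell$ but overlapping, whence $\lambda_{\locind[\anchorone]\ell}(B_{\locind[\anchortwo]\ell})=0$ and the tensor-product Kronecker property follows.

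\textbf{Step (ii).} Linear independence is then immediate: applying $\lambda_{\anchor,\p}$ to $\sum_{\faceE\in\anchors} c_\faceE B_{\faceE,\p}=0$ yields $c_\anchor=0$. For the partition of unity, assuming $1\in\splines_\p$, I would expand $1=\sum_\anchor c_\anchor B_{\anchor,\p}$, apply $\lambda_{\anchor,\p}$, and use the standard normalization $\lambda_{\locind[\anchor]k}(1)=1$ of the univariate de Boor--Fix functional to obtain $c_\anchor = 1$ for every $\anchor$.

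\textbf{Step (iii).} For the $L^2$-stability, I would combine two standard facts. First, each univariate de Boor--Fix functional satisfies $|\lambda_{\locind[\anchor]k}(g)|\leq C_{p_k}\,\|g\|_{L^2(\conv\locind[\anchor]k)}$ with a constant depending only on $p_k$. Second, for every cell $\cell\in\Hdim{d}$ the number of anchors $\anchor$ with $\suppindBA\cap\cell\neq\nothing$ is bounded by a constant $M_\p$ depending only on $\p$, because each $\suppindBA$ spans at most $p_k+2$ global knot indices in direction $k$. Combining both and summing over the $O(M_\p)$ relevant anchors gives the claimed cellwise bound with a constant $C_\p$ depending only on $\p$.

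The main obstacle is step (i): the univariate de Boor--Fix construction is classical, but one must carefully verify that the heterogeneous local knot vectors of different anchors can be consistently embedded into common directional knot vectors $\Xi_k$ so that univariate duality actually transfers to tensor-product duality. This is precisely what the $\WDC$ hypothesis, combined with the finiteness of $\anchors$, is designed to encode, so the argument ultimately reduces to a careful unpacking of \cref{df: overlap} together with classical univariate spline theory; the remaining parts (ii) and (iii) are then fairly direct consequences as in \cite{VeigaBuffaEtAl14}.
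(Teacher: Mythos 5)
First, note that the paper does not prove this proposition at all: its ``proof'' is a one-line citation of \cite[Propositions~7.4, 7.6 and 7.7]{VeigaBuffaEtAl14}, so your attempt is a reconstruction of that reference rather than a parallel to anything argued in this paper. The reconstruction contains one genuine error, located exactly where you flag the main obstacle. In Step~(i) you posit, for each direction $k$, a single ambient knot vector $\Xi_k$ into which \emph{every} local knot vector $\locind[\anchor]{k}$, $\anchor\in\anchors$, embeds as a consecutive subsequence, justified by ``the pairwise overlap property extends inductively to finite families.'' This fails for two reasons. The decisive one is that $\WDC$ does \emph{not} assert that all local knot vectors in a fixed direction pairwise overlap; it only asserts that for each pair of anchors there is \emph{some} pair-dependent direction in which the two knot vectors differ and overlap. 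The paper's own example (the mesh of \cref{as_tsplines::fig:: wgas and wdc but not sgas and not sdc} together with \cref{table::globind configurations for the wdc sdc example}) is $\WDC$ and yet contains anchors $\anchorone,\anchortwo$ with $\locind[\anchorone]{2}\not\overlaps\locind[\anchortwo]{2}$ and $\locind[\anchorone]{3}\not\overlaps\locind[\anchortwo]{3}$, so no common $\Xi_2$ or $\Xi_3$ in the sense of \cref{df: overlap} can exist for that mesh. Secondly, even where pairwise overlap does hold, the overlap relation is not transitive, so the claimed inductive extension to finite families would itself need an argument and is false for general triples of knot vectors.

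The repair is standard and is what \cite{VeigaBuffaEtAl14} actually does: define each univariate functional $\lambda_{\locind[\anchor]{k}}$ \emph{intrinsically} from the anchor's own local knot vector (de Boor--Fix evaluated in a nonempty knot span of $\locind[\anchor]{k}$, or an $L^2$-bounded variant supported there), and use the fact that such a functional is dual to the B-spline basis of \emph{any} knot vector containing $\locind[\anchor]{k}$ as a consecutive subsequence. The vanishing $\lambda_{\locind[\anchorone]{\ell}}\bigl(B_{\locind[\anchortwo]{\ell}}\bigr)=0$ is then invoked pair by pair, in the single direction $\ell$ supplied by $\WDC$ for that particular pair; no global ambient knot vector is needed or available. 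With that change your Steps~(ii) and~(iii) go through essentially as written, modulo the routine verifications that the tensor-product functional extends to $L^2(\pardomain)$ and that the number of anchors whose support meets a fixed cell is bounded in terms of $\p$, and the whole argument then coincides with the cited propositions of \cite{VeigaBuffaEtAl14}.
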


\begin{proof}
See \cite[Proposition 7.4, 7.6, and 7.7]{VeigaBuffaEtAl14}. \let\qed\relax 
\end{proof}

The following lemma and proposition are used in \cref{sec: theorems} for connections between dual-compatibility  and geometric analysis-suitability.
\begin{lemma}\label{lemma: if things hold then x is in GTJ}
 Let $\anchor\in\anchors$, $\Tjunc\in\tjunctions_i$ and $x\in\suppindBA\cap\slice_i(n)$, such that
 \begin{enumerate}
     \item $\Tjunc$ touches the segment between $P_{i, n}(\anchor)$ and $\{x\}$,  
        \begin{equation}
            \label{eq: T touches conv of PA and x}
            \overline\Tjunc\cap\conv\bigl(P_{i,n}(\anchor)\cup\{x\}\bigr) \ne\nothing,
        \end{equation}
    \item in pointing direction of $\Tjunc$, the associated cell $\cell = \ascell(\Tjunc)$ touches the convex hull of $\anchor$ and $\{x\}$, 
        \begin{equation}
            \label{eq: ascell touches conv of PApdir and xpdir}
            \cell_{\pdir(\Tjunc)}\cap\conv(\anchor_{\pdir(\Tjunc)}\cup\{x_{\pdir(\Tjunc)}\})\ne\nothing ,
        \end{equation}
    \item there exists a number $y\in \anchor_{\pdir(\Tjunc)}$ s.t. $y \neq x_{\pdir(\Tjunc)}$,
        \begin{equation}
            \label{eq: A and x differ in pdir}
            \Exists y\in \anchor_{\pdir(\Tjunc)} \colon \, y \neq x_{\pdir(\Tjunc)}
        \end{equation}
    \item the anchor $\anchor$ and \T-junction overlap for some arbitrary direction $\ell\neq i$, 
        \begin{equation}
            \label{eq: overlap in direction ell}
            \locind\ell \overlaps \locextind\ell, \text{ for some } \ell\neq i
        \end{equation}
 \end{enumerate}
 Then $x_\ell$ is contained in the convex hull of the $\ell$-th local index vector of $\Tjunc$, i.e. $x_\ell\in \conv\locextind\ell$.

\end{lemma}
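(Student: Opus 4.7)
My plan is a proof by contradiction that uses the witness point from condition 1 combined with the structural alignment of the two overlapping knot vectors from condition 4. From \eqref{eq: T touches conv of PA and x}, extract a point $z \in \overline\Tjunc \cap \conv(P_{i,n}(\anchor) \cup \{x\})$. Since $P_{i,n}(\anchor)$ differs from $\anchor$ only in the $i$-th coordinate, projecting onto the direction $\ell \neq i$ singled out by condition 4 gives
\[
z_\ell \in \overline{\Tjunc_\ell} \cap \conv(\anchor_\ell \cup \{x_\ell\}).
\]
By \cref{def:GAS}, $\overline{\Tjunc_\ell}$ is the closed middle segment of $\conv\locextind\ell$ (a closed interval bounded by two adjacent knots when $\ell \notin \{i,j\}$, or a single knot point from $\partial\cell_j$ when $\ell = j$), so $z_\ell$ lies simultaneously in this middle segment and in the segment joining $\anchor_\ell$ to $x_\ell$.

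Now I suppose for contradiction that $x_\ell \notin \conv\locextind\ell$, without loss of generality $x_\ell > \max\locextind\ell$ (the mirror case is symmetric). By condition 4 and \cref{df: overlap}, there exist a strictly increasing common sequence $\Xi$ and shifts $k^A, k^T$ such that $\locind\ell$ and $\locextind\ell$ appear as consecutive subsequences $\Xi_{k^A+1},\dots,\Xi_{k^A+|\locind\ell|}$ and $\Xi_{k^T+1},\dots,\Xi_{k^T+|\locextind\ell|}$. Combining $x_\ell \in \conv\locind\ell$ with $x_\ell > \max\locextind\ell$ yields $k^A + |\locind\ell| > k^T + |\locextind\ell|$. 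Since $|\locind\ell| = p_\ell + 2$ while $|\locextind\ell|$ is $p_\ell + 2 + c_\ell$ for $\ell \neq j$ or $p_\ell + 1$ for $\ell = j$, this is a lower bound on the shift gap $k^A - k^T$. Translating the shift bound into absolute positions of the middle entries, where $\anchor_\ell$ and $\Tjunc_\ell$ sit by construction, a short case analysis on the parity of $p_\ell$ and on $\ell = j$ vs.\ $\ell \notin \{i,j\}$ shows $\inf\anchor_\ell \geq \sup\overline{\Tjunc_\ell}$, with strict inequality in all cases except two boundary configurations: (a) $\ell \neq j$, $p_\ell$ even, minimal shift gap, where the openness of $\anchor_\ell$ on its left endpoint still yields $\anchor_\ell \cap \overline{\Tjunc_\ell} = \nothing$; and (b) $\ell = j$, $p_j$ odd, $k^A = k^T$, where $\anchor_j$ degenerates to $\{\sup\cell_j\}$ (or symmetrically $\{\inf\cell_j\}$). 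In (a) and in all the strict cases one obtains $\overline{\Tjunc_\ell} \cap \conv(\anchor_\ell \cup \{x_\ell\}) = \nothing$, contradicting the existence of $z_\ell$.

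The boundary case (b) is closed by invoking condition 2 from \eqref{eq: ascell touches conv of PApdir and xpdir}: there $\anchor_j = \{\sup\cell_j\}$ and $x_j > \sup\cell_j$ give $\conv(\anchor_j \cup \{x_j\}) = [\sup\cell_j, x_j]$, whose intersection with the open interval $\cell_j = (\inf\cell_j, \sup\cell_j)$ is empty, contradicting the hypothesis. Condition 3 from \eqref{eq: A and x differ in pdir} is not strictly needed for the contradiction; it merely rules out the degenerate situation $\anchor_j = \{x_j\}$ in which the claim is trivial. The main obstacle is the index bookkeeping when translating the combinatorial shift inequality into the geometric separation of $\anchor_\ell$ from $\overline{\Tjunc_\ell}$, in particular isolating the single boundary sub-case where the purely combinatorial argument fails and the geometric information carried by condition 2 must be invoked.
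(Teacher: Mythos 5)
Your argument is correct and rests on the same combinatorial core as the paper's proof, but it is organized differently. The paper argues directly: it introduces the auxiliary set $\partsupp$, uses \eqref{eq: T touches conv of PA and x} to show $\overline{\Tjunc_\ell}$ meets it, and then runs eight cases (on $x_\ell\in\anchor_\ell$ or not, $\ell=\pdir(\Tjunc)$ or not, and the parity of $p_\ell$), in each counting how many entries of $\locind\ell$ lie in $[\sup\anchor_\ell,\max\locind\ell]$ against how many entries of $\locextind\ell$ lie on the corresponding side of $\Tjunc_\ell$, to conclude $[\sup\anchor_\ell,\max\locind\ell]\subseteq\conv\locextind\ell$ and hence the claim. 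You take the contrapositive: assuming $x_\ell>\max\locextind\ell$, you read the same offsets off the common refinement $\Xi$ and its shifts $k^A,k^T$, and conclude that $\anchor_\ell$ is separated from $\overline{\Tjunc_\ell}$ on the same side as $x_\ell$, contradicting \eqref{eq: T touches conv of PA and x} (or \eqref{eq: ascell touches conv of PApdir and xpdir} in the one boundary sub-case). This buys a leaner case structure --- you never need to split on whether $x_\ell\in\anchor_\ell$ --- and it makes visible which hypothesis each configuration actually uses; your remark that condition 3 is dispensable is consistent with the paper, where \eqref{eq: A and x differ in pdir} only serves to exclude the configuration $\ell=\pdir(\Tjunc)$, $p_\ell$ odd, $x_\ell\in\anchor_\ell$, which under your contradiction hypothesis is already settled by \eqref{eq: T touches conv of PA and x} since $\anchor_\ell=\{x_\ell\}$ forces $x_\ell\in\overline{\Tjunc_\ell}\subseteq\conv\locextind\ell$.

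Two caveats. First, your list of non-strict boundary configurations omits $\ell=\pdir(\Tjunc)$, $p_\ell$ even, $k^A=k^T$, where one only gets $\inf\anchor_\ell=\sup\overline{\Tjunc_\ell}$; it is disposed of by exactly the same openness argument as your case (a), so nothing breaks, but the enumeration should include it. Second, the step ``translating the shift bound into absolute positions of the middle entries'' is precisely where the substance of the paper's eight cases lives; your sketch asserts the outcomes rather than carrying out the bookkeeping, but the asserted outcomes do check against \cref{df: knot vectors and vectors} and \cref{def:GAS} in every case, and your use of \eqref{eq: ascell touches conv of PApdir and xpdir} to close case (b) mirrors how the paper uses that hypothesis in its Case~8.
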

The proof is given in \cref{appendix: if things hold then x is in GTJ}.

\begin{proposition}\label{prop: sgas implies overlapping}
 Let $\mesh$ be an $\SGAS$ mesh, $\anchor\in\anchors$ and $\Tjunc$ a \T-junction with $\overline\Tjunc\cap\suppindBA\ne\nothing$. 
 Then $\locind k\overlaps \locextind k$ for all $k\ne\odir(\Tjunc)$.
\end{proposition}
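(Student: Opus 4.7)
The plan is to argue by contradiction, using Lemma~\ref{lemma: nonoverlapping implies tjunction} to build a second \T-junction whose extension collides with $\GTJ(\Tjunc)$ and so violates $\SGAS$. Writing $i = \odir(\Tjunc)$, I would suppose some direction $k \ne i$ satisfies $\locind[\anchor]k \not\overlaps \locextind[\Tjunc]k$. Fixing any $x \in \overline\Tjunc \cap \suppindBA$, the coordinate $x_k$ sits in both $\conv\locind[\anchor]k$ and $\conv\locextind[\Tjunc]k$, so these convex hulls meet; non-overlap then forces some integer $m$ in the common range that lies in only one of the two knot vectors. After possibly swapping the roles of $\anchor$ and $\Tjunc$ (the symmetric subcase proceeds identically), I take $m \in \locind[\anchor]k \cap \conv\locextind[\Tjunc]k \setminus \locextind[\Tjunc]k$.

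Since $\SGAS$ implies $\WGAS$, Lemma~\ref{lemma: nonoverlapping implies tjunction} applied with $\faceE = \anchor$, $\face = \Tjunc$, $j = k$ produces a \T-junction $\Tjunc' \in \tjunctions_k$ with $\Tjunc'_k = \{m\}$, pointing direction $k' = \pdir(\Tjunc')$, associated cell $\cell' = \ascell(\Tjunc')$, and a witness point $z \in \overline{\Tjunc'} \cap P_{k,m}(\MBox(\anchor, \Tjunc))$. Because $\odir(\Tjunc') = k \ne i = \odir(\Tjunc)$, the $\SGAS$ hypothesis demands $\GTJ(\Tjunc) \cap \GTJ(\Tjunc') = \nothing$, so it suffices to exhibit a point in this intersection to derive a contradiction. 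The inclusion $z \in \overline{\Tjunc'} \subset \GTJ(\Tjunc')$ is automatic componentwise, since each factor of $\overline{\Tjunc'}$ is contained in the convex hull of the corresponding entry of $\locextind[\Tjunc']{\cdot}$. Then I would verify $z \in \GTJ(\Tjunc)$ by checking $z_\ell \in \conv\locextind[\Tjunc]\ell$ in each direction $\ell$: for $\ell = k$ this is the defining property $z_k = m$; for $\ell \notin \{i, k\}$ the containment follows from $z_\ell \in \MBox(\anchor,\Tjunc)_\ell$ together with the fact that $\locextind[\Tjunc]\ell$ is centered around $\Tjunc_\ell$ with enough extra entries on either side to cover the intermediate range between $\anchor_\ell$ and $\Tjunc_\ell$.

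The main obstacle will be the direction $\ell = i$, where $\conv\locextind[\Tjunc]i$ collapses to the single integer $n$ with $\Tjunc_i = \{n\}$, forcing $z_i = n$ exactly. Although $n \in \MBox(\anchor,\Tjunc)_i$ always holds (by inspection of the three cases in the definition of $\MBox$, since $\inf\Tjunc_i = \sup\Tjunc_i = n$ sits at the boundary of each branch), the point $z$ delivered by Lemma~\ref{lemma: nonoverlapping implies tjunction} may satisfy $z_i \ne n$ in general, e.g.\ when $i = k'$ with $\Tjunc'_{k'}$ a different singleton or when $\overline{\Tjunc'_i}$ is a nontrivial interval. I expect to handle this either by refining the choice of $z$ within $\overline{\Tjunc'} \cap P_{k,m}(\MBox(\anchor,\Tjunc))$ to pin $z_i = n$, using that $n \in \MBox_i$ and the flexibility of varying $z$ in its defining intersection, or by iterating Lemma~\ref{lemma: nonoverlapping implies tjunction} on the resulting $i$-direction mismatch between $\Tjunc'$ and $\Tjunc$ to produce a third \T-junction whose extension still collides with $\GTJ(\Tjunc)$ and violates $\SGAS$. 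Lemma~\ref{lemma: if wgas then projections are not partially in the skeleton} will likely play a supporting role in this last step by ruling out mixed configurations in the slice $\slice_i(n)$.
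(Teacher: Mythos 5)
Your proposal takes a direct, one-shot contradiction argument, whereas the paper proves this proposition by induction over the sequence of box bisections that generated $\mesh$. That structural difference is not cosmetic: it is what makes the paper's argument close, and your version has two genuine gaps.

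First, a circularity. To place your witness point $z$ inside $\GTJ(\Tjunc)$ in the directions $\ell\notin\{i,k\}$, you argue that $\locextind[\Tjunc]\ell$ is ``centered around $\Tjunc_\ell$ with enough extra entries on either side to cover the intermediate range between $\anchor_\ell$ and $\Tjunc_\ell$.'' That is exactly the content of \cref{lemma: if things hold then x is in GTJ}, and that lemma needs hypothesis \eqref{eq: overlap in direction ell}, i.e.\ $\locind\ell\overlaps\locextind\ell$ in the direction $\ell$ under consideration. But the overlap in the directions $\ell\ne k$ is precisely (part of) what \cref{prop: sgas implies overlapping} asserts; you have only assumed its failure in the single direction $k$ and know nothing about the others. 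Without the overlap, the T-junction's extension need not reach across the gap $\MBox(\anchor,\Tjunc)_\ell$: the global knot vector $\globind[\Tjunc]\ell$ can be much denser than $\globind[\anchor]\ell$ between $\Tjunc_\ell$ and $\anchor_\ell$, so the $p_\ell+2+c_\ell$ entries of $\locextind\ell$ may stop well short of $x_\ell$. The example of \cref{dc_splines::knot vectors need not overlap in every direction for wgas} shows the conclusion genuinely fails for $\WGAS$ meshes, so any proof must exploit $\SGAS$ beyond the single collision you construct. The paper escapes the circularity by inducting over bisections: a $j$-orthogonal bisection changes only the $j$-orthogonal skeleton, so the overlap in all directions $\ell\ne j$ is inherited from the induction hypothesis, the offending direction is forced to be $k=j$ (or the T-junction/anchor is new, handled via \cref{lemma: child anchors have parent's knot vectors}), and \cref{lemma: if things hold then x is in GTJ} becomes applicable in the remaining directions.

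Second, the direction $\ell=i$, which you flag yourself, is not resolved. You need simultaneously $z_i=n$ (since $\conv\locextind[\Tjunc]i=\{n\}$) and $n\in\conv\locextind[\Tjunc']i$; neither is guaranteed by \cref{lemma: nonoverlapping implies tjunction}, and ``refining the choice of $z$'' or ``iterating the lemma'' is left as a hope rather than an argument. In the paper's inductive setup this point is handled concretely: in Case~1 the potentially non-overlapping direction of $\Tjuncone$ is the bisection direction $j$, the only new global-knot-vector entry is the bisection value $r$, and $\conv\locextind j=\{r\}$ then produces the contradiction directly. I would recommend restructuring your proof as an induction over \cref{alg: subdiv} rather than trying to patch the direct argument.
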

The proof is given in \cref{appendix: sgas implies overlapping}.

\Cref{prop: sgas implies overlapping} does not hold for $\WGAS$ meshes, an example is depicted in \cref{dc_splines::knot vectors need not overlap in every direction for wgas}. Since each \T-junction has pointing direction 1, the mesh is $\WGAS$. Let $\p = (3, 3, 3)$, and choose $\anchor = \{ m+1 \}\times \{ n \}\times \{ r \}$, as well as $\Tjunc = \{ m+2 \} \times (r-2, r) \times \{n+1 \}$. We then get
\begin{equation}
  \locextind 3 = \oset{ r-3, r-2, r, r+1 }, \und \locind 3 = \oset{ r-2, r-1, r, r+1, r+2 }, 
\end{equation}
and we see that $r-1\in\locind3$ but $r-1\not\in\locextind3$, hence $\locextind3 \not\overlaps \locind3$. 

\begin{figure}[b!]
 \centering
 \begin{tikzpicture}[scale=1.1, every node/.style={scale=0.7}]
  \draw[->] (-.75,-.5)--(3.5,-.5) node[right]{1};
  \draw (0,-.45)--++(0,-.1) node[below, inner sep=1pt]{$\phantom1m\phantom1$};
  \draw (1,-.45)--++(0,-.1) node[below, inner sep=1pt]{$m+1$};
  \draw (2,-.45)--++(0,-.1) node[below, inner sep=1pt]{$m+2$};
  \draw (3,-.45)--++(0,-.1) node[below, inner sep=1pt]{$m+3$};
  \draw (-.7,0)--++(-.1,0) node[left, inner sep=1pt]{$r-2$};
  \draw (-.7,.5)--++(-.1,0) node[left, inner sep=1pt]{$r-1$};
  \draw (-.7,1)--++(-.1,0) node[left, inner sep=1pt]{$r$};
  \draw (-.7,1.5)--++(-.1,0) node[left, inner sep=1pt]{$r+1$};
  \draw (-.7,2)--++(-.1,0) node[left, inner sep=1pt]{$r+2$};
 \draw[->] (-.75,-.5)--(-.75,2.4) node[above]{3};
 \draw[->] (-.75,-.5)--++(140:.8) node[above left]{2};
  \draw (-.75,-.5) ++(140:0) ++(50:.05)--++(230:.1) node[below left,inner sep=0pt]{$\phantom1n\phantom1$};
  \draw (-.75,-.5) ++(140:.3) ++(50:.05)--++(230:.1) node[below left,inner sep=0pt]{$n+1$};
  \draw (-.75,-.5) ++(140:.6) ++(50:.05)--++(230:.1) node[below left,inner sep=0pt]{$n+2$};
 \draw[gray, thick, dotted] (0,-.5)|-(-.75,0) (1,-.5)--(1,0) (2,-.5)--(2,0) (3,-.5)--(3,0);
 \draw (0,0) grid (3,2) 
       (0,0) --++ (140:.6) coordinate (00) 
             --++ (1,0) coordinate (10) 
             --++(1,0) coordinate (20) 
             --++ (1,0) coordinate (30) 
             --++ (0,1) coordinate (31) 
             --++ (-1,0) coordinate (21) 
             --++(-1,0) coordinate (11) 
             --++ (-1,0) coordinate (01) 
             -- (00) 
       (00)  --++ (0,1) coordinate (00j)
             --++ (1,0) coordinate (10j) 
             --++(1,0) coordinate (20j) 
             --++ (1,0) coordinate (30j) 
             --++ (0,1) coordinate (31j) 
             --++ (-1,0) coordinate (21j) 
             --++(-1,0) coordinate (11j) 
             --++ (-1,0) coordinate (01j) 
             -- (00) 
       (0,2) -- (01j)
       (1,2) -- (11j)
             -- (10j)
       (2,2) -- (21j)
             -- (20j)
       (3,2) -- (31j)
       (0,1) -- (01) 
       (1,1) -- (11)
             -- (10) 
             -- (1,0) 
       (2,1) -- (21)
             -- (20)
             -- (2,0) 
       (3,1) -- (31)
             -- (30)
             -- (3,0) 
       (0,.5) --++ (140:.6) coordinate (05) 
              --++ (1,0) coordinate (15) 
              -- (1,.5) -- cycle
       (1,1.5) --++ (140:.6) coordinate (05j) 
               --++ (1,0) coordinate (15j) 
               -- (2, 1.5) -- cycle
       (2,0) ++ (140:.3) coordinate (20i) 
             --++ (1,0) coordinate (30i) 
             --++ (0,1) coordinate(31i) 
             --++ (-1,0) coordinate (21i)
             --cycle
       (2,1) ++ (140:.3) coordinate (20j) 
             --++ (1,0) coordinate (30j) 
             --++ (0,1) coordinate(31j) 
             --++ (-1,0) coordinate (21j)
             --cycle;
  \end{tikzpicture}
 \caption{A $\WGAS$ mesh, where $\locextind 3 \not\overlaps \locind 3$ in general.}\label{dc_splines::knot vectors need not overlap in every direction for wgas}
\end{figure}

\section{Main Results}\label{sec: theorems}

In this Section, we focus on the indicated relations from \cref{fig: nesting behavior of mesh classes}. Note that the relations $\SGAS \subset \WGAS$ and $\SDC\subset\WDC$ are already evident from the previous sections by construction. In \cref{dc_splines::thm::AAS<->DC}, we extend the result from \cite{Morgenstern17} to arbitrary degrees, i.e. the initial restriction to only odd polynomial degrees can be dropped. 
\begin{theorem}\label{dc_splines::thm::AAS<->DC}
All $\AAS$ meshes are $\SDC$ and vice versa.
\end{theorem}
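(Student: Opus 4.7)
I would prove both directions by contradiction, working directly from \cref{eq: def ATJ} and the definition of (strong) overlap of local knot vectors in \cref{df: overlap}. The key preliminary observation is that $\locind[\anchor]k$ is a consecutive window of $\globind[\anchor]k$ around $\anchor_k$ and $\suppindBA = \bigtimes_k \conv\locind k$, so for any two anchors $\anchorone,\anchortwo$ the local knot vectors $\locind[\anchorone]k$ and $\locind[\anchortwo]k$ fail to overlap if and only if some integer in $\conv\locind[\anchorone]k \cap \conv\locind[\anchortwo]k$ belongs to exactly one of $\globind[\anchorone]k, \globind[\anchortwo]k$. I would first state and justify this equivalence as a short lemma from \cref{df: overlap}.

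\emph{For $\AAS \Rightarrow \SDC$:} I would suppose two distinct anchors $\anchorone \ne \anchortwo$ violate $\SDC$. Then $\suppindBA[\anchorone]\cap\suppindBA[\anchortwo]\ne\nothing$ and the local knot vectors fail to overlap in at least two directions $i \ne j$. By the preliminary observation, I pick integers $n \in \conv\locind[\anchorone]i \cap \conv\locind[\anchortwo]i$ and $m \in \conv\locind[\anchorone]j \cap \conv\locind[\anchortwo]j$ that each lie in exactly one of the corresponding global knot vectors. Because the intersection of supports is a Cartesian product of intervals, I can select $x \in \suppindBA[\anchorone]\cap\suppindBA[\anchortwo]$ with $x_i = n$ and $x_j = m$. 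Then $x\in\slice_i(n)$, and whichever of $\anchorone, \anchortwo$ has $n$ in its $i$-th global knot vector contributes to the first union in \cref{eq: def ATJ} while the other contributes to the second, giving $x\in\ATJ_i(n)$. Analogously $x\in\ATJ_j(m)$, contradicting $\AAS$.

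\emph{For $\SDC \Rightarrow \AAS$:} I would take $x\in\ATJ_i(n)\cap\ATJ_j(m)$ for some $i\ne j$. Unpacking \cref{eq: def ATJ} for both directions, each of the four conditions $n\in\globind[\anchor]i$, $n\notin\globind[\anchor]i$, $m\in\globind[\anchor]j$, $m\notin\globind[\anchor]j$ is witnessed by some anchor $\anchor$ with $x \in \suppindBA$. I classify every such anchor by a pair $(\alpha,\beta)\in\{1,2\}^2$ recording whether $n\in\globind[\anchor]i$ and whether $m\in\globind[\anchor]j$. Since both values of $\alpha$ and both values of $\beta$ are attained, a short pigeonhole argument produces two anchors $\anchor',\anchor''$ whose labels differ in both coordinates. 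Because $x_i=n$ and $x_j=m$ lie in the convex hulls of the respective local knot vectors of both anchors, the preliminary observation yields that $\locind[\anchor']i$ and $\locind[\anchor'']i$ fail to overlap, and likewise in direction $j$. Hence $B_{\anchor'}$ and $B_{\anchor''}$ have intersecting supports yet overlap in at most $d-2$ directions, violating $\SDC$.

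The main obstacle is pinning down the preliminary equivalence carefully, in particular checking that ``overlap'' in the sense of \cref{df: overlap} boils down to equality of the global knot vectors restricted to the common range $\conv\locind[\anchorone]k \cap \conv\locind[\anchortwo]k$ (which uses that the global knot vectors are strictly increasing integer sequences in the mesh setting). Once that lemma is in hand, the remaining reasoning manipulates only set-membership of integers in global knot vectors and is insensitive to the parity of the components of $\p$, which is precisely what allows the extension of \cite{Morgenstern17} from odd to arbitrary polynomial degrees.
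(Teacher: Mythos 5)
Your proposal is correct and follows essentially the same route as the paper's proof: both directions argue by contradiction, and your preliminary lemma is precisely the local-to-global conversion the paper carries out inline via $\{n_1\}\cap\globind[\anchortwo]1\subset[m_1,M_1]\cap\globind[\anchortwo]1\subset\locind[\anchortwo]1$ together with the construction of the point $\{n_1\}\times\{n_2\}\times\bigtimes_{k=3}^d[m_k,M_k]$ in $\ATJ_1(n_1)\cap\ATJ_2(n_2)$. The only cosmetic difference is in the $\SDC\subseteq\AAS$ direction, where your pigeonhole on the labels $(\alpha,\beta)$ cleanly replaces the paper's explicit five-case analysis and its accompanying table.
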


\begin{theorem}\label{thm: gas implies aas}%
 All $\SGAS$ T-meshes are $\AAS$. 
\end{theorem}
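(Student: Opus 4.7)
The plan is to argue by contrapositive: assume the mesh is not $\AAS$ and derive that it is not $\SGAS$. So suppose there are directions $i\ne j$, integers $n\in\{0,\dots,N_i\}$ and $m\in\{0,\dots,N_j\}$, and a point $x\in\ATJ_i(n)\cap\ATJ_j(m)$. My strategy is to locate two \T-junctions $\Tjunc^{(1)},\Tjunc^{(2)}$ with $\odir(\Tjunc^{(1)})=i\ne j=\odir(\Tjunc^{(2)})$ and show $x\in\GTJ(\Tjunc^{(1)})\cap\GTJ(\Tjunc^{(2)})$; this violates \cref{as_tsplines::def::gtj::intersection} and thus contradicts $\SGAS$.

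To extract these \T-junctions together with supporting anchors, I apply \cref{prop: each ATJ has a Tjunction} to each of the two memberships of $x$. This yields anchors $\anchorone,\anchortwo\in\anchors$ with $x\in\suppindBA[\anchorone]\cap\suppindBA[\anchortwo]$, and \T-junctions $\Tjunc^{(1)},\Tjunc^{(2)}$ with $\odir(\Tjunc^{(1)})=i$ and $\odir(\Tjunc^{(2)})=j$, each satisfying properties (1)--(3) of that proposition relative to the corresponding pair. It then suffices to certify $x\in\GTJ(\Tjunc^{(1)})$; the companion statement for $\Tjunc^{(2)}$ follows by a completely symmetric argument.

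Certifying $x\in\GTJ(\Tjunc^{(1)})$ amounts to verifying $x_k\in\conv\locextind[\Tjunc^{(1)}]k$ for every $k$. For the orthogonal direction $k=i$, the singleton $\Tjunc^{(1)}_i$ is forced to equal $\{n\}$, since the point of $\overline{\Tjunc^{(1)}}$ supplied by property~(1) lies in $\conv(P_{i,n}(\anchorone)\cup\{x\})\subset\slice_i(n)$; hence $x_i=n\in\conv\locextind[\Tjunc^{(1)}]i$. For $k\ne i$, I plan to invoke \cref{lemma: if things hold then x is in GTJ}: three of its four hypotheses are exactly the conclusions of \cref{prop: each ATJ has a Tjunction}, and the only remaining ingredient is the overlap $\locind[\anchorone]k\overlaps\locextind[\Tjunc^{(1)}]k$, which I intend to harvest from \cref{prop: sgas implies overlapping}.

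The main bookkeeping step will be to verify the hypothesis $\overline{\Tjunc^{(1)}}\cap\suppindBA[\anchorone]\ne\nothing$ of \cref{prop: sgas implies overlapping}. Here I will use that $x\in\suppindBA[\anchorone]$ forces $n=x_i\in\conv\locind[\anchorone]i$, so that $P_{i,n}(\anchorone)\subset\suppindBA[\anchorone]$; convexity of the closed box $\suppindBA[\anchorone]$ then places the witness point $z\in\overline{\Tjunc^{(1)}}\cap\conv(P_{i,n}(\anchorone)\cup\{x\})$ inside $\suppindBA[\anchorone]$, as required. Once \cref{prop: sgas implies overlapping} applies, the overlap holds for every direction $k\ne i$, \cref{lemma: if things hold then x is in GTJ} delivers $x_k\in\conv\locextind[\Tjunc^{(1)}]k$, and the symmetric argument for $\Tjunc^{(2)}$ closes the contradiction. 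The hardest part, as indicated, will be the support-intersection check for \cref{prop: sgas implies overlapping}; everything else is a direct assembly of the preceding lemmas.
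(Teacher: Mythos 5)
Your proposal is correct and follows essentially the same route as the paper's proof: apply \cref{prop: each ATJ has a Tjunction} to obtain an anchor and an $i$-orthogonal \T-junction, verify $\overline\Tjunc\cap\suppindBA\ne\nothing$ via $P_{i,n}(\anchor)\subset\suppindBA$ so that \cref{prop: sgas implies overlapping} supplies the overlaps needed for \cref{lemma: if things hold then x is in GTJ}, and conclude $x\in\GTJ(\Tjunc)$, i.e.\ $\ATJ_i\subseteq\GTJ_i$. The only difference is your contrapositive packaging of the final intersection step, which is cosmetic.
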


\begin{conjecture}\label{thm: wgas implies wdc}
 All $\WGAS$ meshes are $\WDC$.
\end{conjecture}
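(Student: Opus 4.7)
The plan is to argue by contradiction: suppose a $\WGAS$ mesh $\mesh$ fails to be $\WDC$, so there exist distinct anchors $\anchorone, \anchortwo \in \anchors$ such that for every direction $\ell$, either $\locind[\anchorone]\ell = \locind[\anchortwo]\ell$ or $\locind[\anchorone]\ell \not\overlaps \locind[\anchortwo]\ell$. A preliminary reduction excludes $\suppind B_{\anchorone} \cap \suppind B_{\anchortwo} = \nothing$: in that case the convex hulls of $\locind[\anchorone]\ell$ and $\locind[\anchortwo]\ell$ are disjoint for some $\ell$, their ordered concatenation is a common knot vector witnessing $\overlaps$ per \cref{df: overlap}, and the pair weakly partially overlaps, contradicting the assumption. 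So we may assume $\suppind B_{\anchorone} \cap \suppind B_{\anchortwo} \neq \nothing$.

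Next, pick a direction $j$ with $\locind[\anchorone]j \neq \locind[\anchortwo]j$; the standing assumption gives $\locind[\anchorone]j \not\overlaps \locind[\anchortwo]j$, while the overlapping convex hulls of these vectors (from $\suppind B_{\anchorone} \cap \suppind B_{\anchortwo} \neq \nothing$) force an index $m \in \locind[\anchorone]j \cap \conv\locind[\anchortwo]j \setminus \locind[\anchortwo]j$ after possibly swapping the roles of the two anchors. \Cref{lemma: nonoverlapping implies tjunction} then provides a $\T$-junction $\Tjunc^{(1)}\in\tjunctions_j$ with pointing direction $k_1\neq j$, cell $\cell^{(1)} = \ascell(\Tjunc^{(1)})$, $\cell^{(1)}_{k_1}\cap\MBox[\anchorone,\anchortwo]_{k_1}\neq\nothing$, and crucially $\anchorone_{k_1}\cap\anchortwo_{k_1} = \nothing$. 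I would then compare $\locextind[\Tjunc^{(1)}]{k_1}$ to each of $\locind[\anchorone]{k_1}$ and $\locind[\anchortwo]{k_1}$: if both overlap with $\locextind[\Tjunc^{(1)}]{k_1}$, the disjointness $\anchorone_{k_1}\cap\anchortwo_{k_1} = \nothing$ together with the centering of $\locextind[\Tjunc^{(1)}]{k_1}$ around $\Tjunc^{(1)}_{k_1}$ from \cref{def:GAS} should yield $\locind[\anchorone]{k_1}\overlaps\locind[\anchortwo]{k_1}$, contradicting the standing hypothesis. Otherwise one of the overlaps fails, and \cref{lemma: nonoverlapping implies tjunction} applied to the non-overlapping (anchor,$\,\T$-junction) pair produces a further $\T$-junction $\Tjunc^{(2)}\in\tjunctions_{k_1}$ with pointing direction $k_2\neq k_1$, on which the argument is iterated.

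The $\WGAS$ hypothesis enters by forbidding $\GTJ(\Tjunc^{(\ell)})\cap\GTJ(\Tjunc^{(\ell+1)})\neq\nothing$ whenever $\pdir(\Tjunc^{(\ell)})\neq\pdir(\Tjunc^{(\ell+1)})$, geometrically confining successive $\T$-junctions to shrinking bounding boxes $\MBox[\anchor,\Tjunc^{(\ell)}]$. The principal obstacle---and precisely what leaves the claim as a conjecture---is a rigorous termination argument for this chain: I would attempt to exhibit a strictly decreasing nonnegative integer invariant, for instance the count of entries of the $k_\ell$-th global knot vector strictly between the $k_\ell$-component of $\Tjunc^{(\ell)}$ and that of the involved anchor, so that the process must terminate in either a direct overlap witness (contradicting non-$\WDC$) or forced support separation (contradicting the standing intersection assumption). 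Additional delicate points are (i) that $\overlaps$ is not transitive in general, so deducing $\locind[\anchorone]{k_\ell}\overlaps\locind[\anchortwo]{k_\ell}$ from anchor-to-$\T$-junction overlaps requires a case split on the parity of $p_{k_\ell}$ via the centering conventions of \cref{def:GAS} and \cref{df: knot vectors and vectors}, and (ii) that the convex-hull compatibility needed to invoke \cref{lemma: nonoverlapping implies tjunction} at each iteration must be verified afresh. The $\WGAS$ allowance for extensions with coinciding pointing directions to intersect is what makes this combinatorics substantially harder than the $\SGAS$ case of \cref{thm: gas implies aas}.
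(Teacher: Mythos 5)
Your proposal follows essentially the same strategy as the paper's own argument for this statement, which the paper itself presents only as a \emph{proof sketch} of a \emph{conjecture}: negate $\WDC$ to obtain two anchors whose local knot vectors are, in every direction, either identical or non-overlapping; invoke \cref{lemma: nonoverlapping implies tjunction} to produce a \T-junction whose pointing direction $k^{(0)}$ separates the anchors' $k^{(0)}$-components; iterate to build a chain of \T-junctions with $\odir(\Tjunc^{(\ell+1)})=\pdir(\Tjunc^{(\ell)})$; and aim to contradict $\WGAS$ by exhibiting two consecutive geometric \T-junction extensions that intersect. The only structural difference is that after the first step you compare an anchor's knot vector with the \T-junction's extension knot vector and iterate on that pair, whereas the paper's sketch keeps applying \cref{lemma: nonoverlapping implies tjunction} to the anchor pair itself (noting $\anchorone_{k^{(0)}}\cap\anchortwo_{k^{(0)}}=\nothing$ forces non-overlap in direction $k^{(0)}$ by the standing hypothesis) and then observes that the resulting infinite sequence of \T-junctions must close into a cycle. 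Both variants stall at exactly the point you flag: neither a strictly decreasing integer invariant nor any other termination argument is actually exhibited, so it is not established that the chain must end in an overlap witness, a support separation, or an intersection of consecutive extensions. That missing step is precisely why the statement is a conjecture in the paper rather than a theorem, so your attempt shares the paper's gap rather than introducing a new one; your step deducing $\locind[\anchorone]{k_1}\overlaps\locind[\anchortwo]{k_1}$ from two anchor-to-extension overlaps is also only asserted (``should yield'') and, as you note, needs the containment $\conv\locextind[\Tjunc^{(1)}]{k_1}\supseteq\conv\locind[\anchorone]{k_1}\cap\conv\locind[\anchortwo]{k_1}$ to be verified. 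The auxiliary reductions you add are correct and consistent with the paper: disjoint supports yield weak partial overlap via concatenated knot vectors exactly as in the paper's remark that $\SDC$ implies $\WDC$, and the extraction of $m\in\locind[\anchorone]j\cap\conv\locind[\anchortwo]j\setminus\locind[\anchortwo]j$ (after possibly swapping roles) is the standard consequence of non-overlap.
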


\begin{proof}[Proof of \cref{dc_splines::thm::AAS<->DC}]
This is a generalization of \cite[Theorem~5.3.14]{Morgenstern17}, we hence follow the original proof and extend necessary steps to the case of arbitrary polynomial degrees.
\paragraph{$\AAS \subseteq \SDC$}
We assume for contradiction a mesh $\mesh \in \AAS\setminus\SDC$ and let $\anchors$ be the set of anchors over $\mesh$ with the corresponding set of \T-splines $\{B_\anchor \colon \anchor\in\anchors\}$. Since $\mesh\not\in\SDC$ there exist two anchors $\anchorone, \anchortwo\in \anchors$, $\anchorone\neq\anchortwo$, such that $B_{\anchorone} \npartialoverlaps B_{\anchortwo}$. This implies that the corresponding knot vectors do not overlap in at least two directions and that
  \begin{equation}\label{eq: overlapping supports}
    \supp_\Omega B_{\anchorone} \cap \supp_{\Omega}B_{\anchortwo}\ne\nothing.
  \end{equation}
  Denote 
  \begin{equation}
    \begin{aligned}
      m_k &= \max\{ \min \locind[\anchorone]k, \min \locind[\anchortwo]k \}, \\
      M_k &= \min\{ \max \locind[\anchorone]k, \max \locind[\anchortwo]k \},
    \end{aligned}\quad
    k = 1,\dots,d
    \end{equation}
    then \cref{eq: overlapping supports} yields that $m_k\le M_k$ for all $k = 1,\dots,d$.
    Assume without loss of generality that the directions in which the knot vectors of $\anchorone$ and $\anchortwo$ do not overlap are the first and second dimension, i.e., $\locind[\anchorone]1 \not\overlaps \locind[\anchortwo]1$ and $\locind[\anchorone]2 \not\overlaps\locind[\anchortwo]2$.

Thus, there is an index $n_1 \in [m_1, M_1]$, with either $n_1\in\locind[\anchorone]1$ and $n_1\notin\locind[\anchortwo]1$ or 
$n_1\notin\locind[\anchorone]1$ and $n_1\in \locind[\anchortwo]1$.  

\emph{Case 1:}\enspace $n_1\in\locind[\anchorone]1$ and $n_1\notin\locind[\anchortwo]1$.
Then we have $\{ n_1 \} \cap \globind[\anchortwo]1 \subset [m_1, M_1] \cap \globind[\anchortwo]1 \subset \locind[\anchortwo]1$, and it follows 
$n_1 \notin\globind[\anchortwo]1$, while $n_1 \in \locind[\anchorone]1\subseteq \globind[\anchorone]1$ yields $n_1 \in \globind[\anchorone]1$.

\emph{Case 2:}\enspace $n_1\notin\locind[\anchorone]1$ and $n_1\in \locind[\anchortwo]1$.
Then we have $\{ n_1 \} \cap \globind[\anchorone]1 \subset [m_1, M_1] \cap \globind[\anchorone]1 \subset \locind[\anchorone]1$, and it follows 
$n_1 \notin\globind[\anchorone]1$, while $n_1 \in \locind[\anchortwo]1\subseteq \globind[\anchortwo]1$ yields $n_1 \in \globind[\anchortwo]1$.

In both cases, \cref{def:AAS} yields
  \begin{equation}
    \ATJ_1(n_1) \supset \slice_1(n_1) \cap \suppind B_{\anchorone} \cap \suppind B_{\anchortwo} = \{ n_1 \} \times \bigtimes_{k=2}^d [m_k, M_k].
  \end{equation}
  Analogously, there exists $n_2$, such that 
  \begin{equation}
    \ATJ_2(n_2) \supset [m_1, M_1] \times \{ n_2 \} \times \bigtimes_{k=3}^d [m_k, M_k].
  \end{equation}
  Together, there is 
  \begin{equation}
    \ATJ_1(n_1) \cap \ATJ_2(n_2) \supset  \{ n_1 \} \times \{ n_2 \} \times \bigtimes_{k=3}^d [m_k, M_k] \neq \nothing,
  \end{equation}
  which contradicts the assumption that $\mesh\in\AAS$.
\paragraph{$\SDC\subseteq\AAS$}
Assume that $\mesh \in \SDC\setminus\AAS$. Then there exist $i\neq j$ with $\ATJ_i\cap \ATJ_j \neq\nothing$, 
  and there is a point $e\in\N^d$, with $e = (e_1, \dots, e_d)\in \ATJ_i\cap\ATJ_j$. 
  Assume without loss of generality that $i=1$, $j=2$, 
  Then there exist by definition anchors $\anchorone, \anchortwo, \anchorthree, \anchorfour\in \anchors$ with
  \begin{gather}
    e \in \slice_1(e_1) \cap \slice_2(e_2) \cap \suppind B_{\anchorone} \cap \suppind B_{\anchortwo} \cap \suppind B_{\anchorthree} \cap \suppind B_{\anchorfour},\\
    \text{with}\quad e_1\in\globind[\anchorone]1\setminus\globind[\anchortwo]1\quad\text{and}\quad e_2\in\globind[\anchorthree]2\setminus\globind[\anchorfour]2.
  \end{gather}
  From $e\in\suppind B_{\anchorone}$ and $e_1\in\globind[\anchorone]1$ we deduce $e_1\in\conv \locind[\anchorone]1\cap\globind[\anchorone]1 = \locind[\anchorone]1$, and from $e\in\suppind B_{\anchorone}\cap\suppind B_{\anchortwo}$ and $e_1\in\globind[\anchorone]1\setminus\globind[\anchortwo]1$ we deduce
  $e_1\in\conv \locind[\anchortwo]1\setminus\globind[\anchortwo]1 = \conv \locind[\anchortwo]1\setminus\locind[\anchortwo]1$. Together, this yields that $\locind[\anchorone]1\not\overlaps\locind[\anchortwo]1$.
  Analogously, we have $e_2\in\locind[\anchorthree]2\setminus\locind[\anchorfour]2$ and $\locind[\anchorthree]2\not\overlaps\locind[\anchorfour]2$. 
  
  We show below that there is a pair of splines whose knot vectors do not overlap in two directions. The arguments for non-overlapping knot vectors will be the same as before. 
  \begin{itemize}[noitemsep]
    \item[Case 1:] If $e_2\in\locind[\anchorone]2 $, and $e_2\notin\locind[\anchortwo]2 $, or vice versa, then $\locind[\anchorone]2 \not\overlaps \locind[\anchortwo]2$, hence $B_{\anchorone}\npartialoverlaps B_{\anchortwo}$.
    \item[Case 2:] If $e_2\in\locind[\anchorone]2 $, and $e_1\notin\locind[\anchorfour]1 $, then $B_{\anchorone} \npartialoverlaps B_{\anchorfour}$.
    \item[Case 3:] If $e_2\notin\locind[\anchorone]2 $, and $e_1\notin\locind[\anchorthree]1 $, then $B_{\anchorone} \npartialoverlaps B_{\anchorthree}$.
    \item[Case 4:] If $e_2\in\locind[\anchortwo]2 $, and $e_1\in\locind[\anchorfour]1 $, then $B_{\anchortwo}\npartialoverlaps B_{\anchorfour}$.
    \item[Case 5:] If $e_2\notin\locind[\anchortwo]2 $, and $e_1\in\locind[\anchorthree]1 $, then $B_{\anchortwo}\npartialoverlaps B_{\anchorthree}$.
  \end{itemize}
  In all cases (see \cref{dc_splines::tab::cases}), the mesh is not strongly dual-compatible.
%
\end{proof}

\begin{table}[b!]
\caption{The cases considered in the proof of \cref{dc_splines::thm::AAS<->DC} cover all possible configurations. This is a modified version of \cite[Table~1]{Morgenstern16}.}\label{dc_splines::tab::cases}
\centering
\newcommand{\dummy}{\rule[-1em]{0pt}{3em}}
\begin{tabular}{@{}c@{}c|cc|cc}
 &&\multicolumn{2}{c|}{$e_1\in\locind[\anchorthree]1$}&\multicolumn{2}{c}{$e_1\notin\locind[\anchorthree]1$} \\
 && $e_1\in\locind[\anchorfour]1$ & $e_1\notin\locind[\anchorfour]1$ & $e_1\in\locind[\anchorfour]1$ & $e_1\notin\locind[\anchorfour]1$ \\
 \hline
\dummy \multirow{2}*[1em]{\rotatebox[origin=c]{70}{$e_2\in\locind[\anchorone]2$}} & $e_2\in\locind[\anchortwo]2$ & case~4 & case~2 & case~4 & case~2 \\
\dummy & $e_2\notin\locind[\anchortwo]2$ & case~1, 5 & cases~1, 2, 5 & case~1 & cases~1, 2 \\
 \hline
\dummy \multirow{2}*[1em]{\rotatebox[origin=c]{70}{$e_2\notin\locind[\anchorone]2$}} & $e_2\in\locind[\anchortwo]2$ & cases~1, 4 & cases~1 & cases~1, 3, 4 & cases~1, 3 \\
\dummy & $e_2\notin\locind[\anchortwo]2$ & case~5 & case~5 & case~3 & case~3
\end{tabular}

\end{table}

\begin{remark}
  Note that $\SDC\subset \WDC$, and hence from \cite{VeigaBuffaEtAl14} we know that the generated splines are linearly independent. 
  However, the reverse direction does not hold, as the mesh illustrated in \cref{as_tsplines::fig:: wgas and wdc but not sgas and not sdc} is $\WDC$, but not $\SDC$ 
  (and by \cref{dc_splines::thm::AAS<->DC} not  $\AAS$, and by \cref{thm: gas implies aas} not  $\SGAS$). 
\end{remark}

In \cref{as_tsplines::fig::demo,as_tsplines::fig::atj vs gtj}, we indicated that the abstract \T-junction extensions are a subset of the geometric \T-junction extensions. However, this is not the case in general. 
Consider e.g.\@ \cref{aas_gas_dc::fig::atj sup gtj}, which can be constructed by subdividing the lower left cell recursively. 
Again, the figure shows only the active region.
We consider $\p = (3,3)$ and obtain the geometric \T-junction extensions given in \cref{aas_gas_dc::fig::atj sup gtj::gtj}
and the abstract \T-junction extensions shown in \cref{aas_gas_dc::fig::atj sup gtj::atj}.
However, the abstract \T-junction extensions are a subset of the geometric \T-junction extensions if the mesh is analysis-suitable. This is shown below.

\begin{figure}[b!]\centering
\subfloat[Geometric T-junction extensions]{\label{aas_gas_dc::fig::atj sup gtj::gtj}
\begin{tikzpicture}[scale=0.3, every node/.style={scale=0.7}]
\draw [line width =2.3pt,dotted, LUH-lblue] (1, 1) -- ( 8, 1); 
\draw [line width =2.3pt,dotted, LUH-lblue] (2, 2) -- (16, 2); 
\draw [line width =2.3pt,dotted, LUH-lblue] (4, 4) -- (16, 4); 

\draw [line width =2.3pt,dotted, LUH-lblue] (1, 1) -- (1, 8 ); 
\draw [line width =2.3pt,dotted, LUH-lblue] (2, 2) -- (2, 16); 
\draw [line width =2.3pt,dotted, LUH-lblue] (4, 4) -- (4, 16); 

\foreach \n in {1, 2, 4, 8}{
  \foreach \x in {0, 1}{
    \foreach \y in {0, 1}{
      \draw (\n*\x, \n*\y) -- (\n*\x + \n, \n*\y) -- (\n*\x + \n, \n*\y + \n) -- (\n*\x, \n*\y + \n) -- cycle;
    }
  }
}

\foreach \x in {1, 2, 4}{
  \draw[LUH-blue, fill = LUH-blue] (\x, \x) circle (7pt);
}

\node[below] at ( 0, 0) {3}; 
\node[below] at ( 1, 0) {4}; 
\node[below] at ( 2, 0) {5}; 
\node[below] at ( 4, 0) {6}; 
\node[below] at ( 8, 0) {7}; 
\node[below] at (16, 0) {8}; 

\node[left] at ( 0,  0) {3}; 
\node[left] at ( 0,  1) {4}; 
\node[left] at ( 0,  2) {5}; 
\node[left] at ( 0,  4) {6}; 
\node[left] at ( 0,  8) {7}; 
\node[left] at ( 0, 16) {8}; 
\end{tikzpicture} 
}\hfill
\subfloat[Abstract T-junction extensions]{\label{aas_gas_dc::fig::atj sup gtj::atj}
\begin{tikzpicture}[scale=0.3, every node/.style={scale=0.7}]
\draw [line width =2.3pt,dotted, LUH-lblue] (0, 1) -- ( 8, 1); 
\draw [line width =2.3pt,dotted, LUH-lblue] (0, 2) -- (16, 2); 
\draw [line width =2.3pt,dotted, LUH-lblue] (0, 4) -- (16, 4); 

\draw [line width =2.3pt,dotted, LUH-lblue] (1, 0) -- (1, 8 ); 
\draw [line width =2.3pt,dotted, LUH-lblue] (2, 0) -- (2, 16); 
\draw [line width =2.3pt,dotted, LUH-lblue] (4, 0) -- (4, 16); 

\foreach \n in {1, 2, 4, 8}{
  \foreach \x in {0, 1}{
    \foreach \y in {0, 1}{
      \draw (\n*\x, \n*\y) -- (\n*\x + \n, \n*\y) -- (\n*\x + \n, \n*\y + \n) -- (\n*\x, \n*\y + \n) -- cycle;
    }
  }
}


\node[below] at ( 0, 0) {3}; 
\node[below] at ( 1, 0) {4}; 
\node[below] at ( 2, 0) {5}; 
\node[below] at ( 4, 0) {6}; 
\node[below] at ( 8, 0) {7}; 
\node[below] at (16, 0) {8}; 

\node[left] at ( 0,  0) {3}; 
\node[left] at ( 0,  1) {4}; 
\node[left] at ( 0,  2) {5}; 
\node[left] at ( 0,  4) {6}; 
\node[left] at ( 0,  8) {7}; 
\node[left] at ( 0, 16) {8}; 
\end{tikzpicture} 
}
\caption{Example for $\GTJ_i \subset \ATJ_i$, where $\p = (3, 3)$.}
\label{aas_gas_dc::fig::atj sup gtj}
\end{figure}
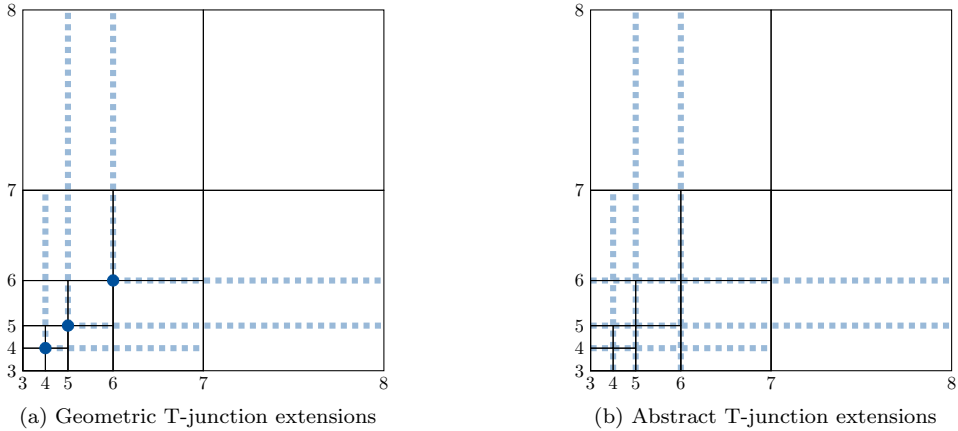

\begin{proof}[Proof of \cref{thm: gas implies aas}]
Let $\mesh$ be $\SGAS$ and $\ATJ_i\ne\nothing$, then there is a point $x\in\ATJ_i(n)\ne\nothing$ for some $n\in[0,N_i]$, and 
\cref{prop: each ATJ has a Tjunction} yields $\anchor\in\anchors, \Tjunc\in\mathbb{T}_i, \cell=\ascell(\Tjunc)$ with $x\in\suppindBA$ and
\begin{align}
\label{eq: gas implies aas: T touches conv of PA and x}
\overline\Tjunc\cap\conv\bigl(P_{i,n}(\anchor)\cup\{x\}\bigr)&\ne\nothing ,\\
 \cell_{\pdir(\Tjunc)}\cap\conv(\anchor_{\pdir(\Tjunc)}\cup\{x_{\pdir(\Tjunc)}\})&\ne\nothing ,\\
 \Exists y\in\anchor_{\pdir(\Tjunc)}\colon \quad y&\ne x_{\pdir(\Tjunc)}.
\end{align}
We write $\locind \ell = \oset{a_1^{(\ell)}, \dots, a_{p_\ell + 2}^{(\ell)}}$. 
From  $x\in\suppindBA=\bigtimes_{\ell=1}^d[a^{(\ell)}_1,a^{(\ell)}_{p+2}]$ we have $x_i\in[a^{(i)}_1,a^{(i)}_{p_i+2}]$ and hence 
$P_{i,n}(\anchor)\subset\suppindBA$. With \cref{eq: gas implies aas: T touches conv of PA and x}, we get $\overline\Tjunc\cap\suppindBA\ne\nothing$, and \cref{prop: sgas implies overlapping} yields $\locind\ell\overlaps \locextind\ell$ for all $\ell\ne i$.
With \cref{lemma: if things hold then x is in GTJ}, we obtain $x_\ell\in\conv\locextind\ell$ for $\ell\ne i$.
Moreover, we have by construction $x_i\in\{x_i\}=\Tjunc_i=\locextind i=\conv\locextind i$. Altogether, for any $i\in\{1,\dots,d\}$ and $x\in\ATJ_i$ there is a \T-junction $\Tjunc\in\tjunctions_i$ with $x\in\bigtimes_{\ell=1}^d\conv\locextind\ell
  =\GTJ(\Tjunc)$, and hence $\ATJ_i\subseteq\GTJ_i$.
Since $\mesh$ is $\SGAS$, we get for any $i\ne j$ that
\begin{equation}
  \ATJ_i \cap \ATJ_j\subseteq\GTJ_i \cap \GTJ_j =\nothing,
\end{equation}
which concludes the proof. 
\end{proof}

\begin{proof}[Proof sketch of \cref{thm: wgas implies wdc}]
  Assume for contradiction a mesh $\mesh\in\WGAS\setminus\WDC$.
 $\mesh$ being not $\WDC$ means that there exist anchors $\anchorone,\anchortwo\in\anchors$ with 
 \begin{align}
\forall \ell\in\{1,\dots,d\}&:\locind[\anchorone]\ell=\locind[\anchortwo]\ell\vee \locind[\anchorone]\ell\not\overlaps\locind[\anchortwo]\ell,
\label{wgas implies wdc::eq:locinds equal or nonoverlapping}
\\
\und \exists j\in\{1,\dots,d\}&:\locind[\anchorone]j\not\overlaps\locind[\anchortwo]j.
\label{wgas implies wdc::eq:no complete overlap}
\end{align}

\Cref{wgas implies wdc::eq:no complete overlap} and \cref{lemma: nonoverlapping implies tjunction}
yields a \T-junction $\Tjunc^{(0)}\in\tjunctions_j$ with $\Tjunc^{(0)}_j=\{m^{(0)}\}$, $k^{(0)}=\pdir(\Tjunc^{(0)})$, $\cell^{(0)}=\ascell(\Tjunc^{(0)})$ such that
 \begin{gather}
  m^{(0)}\in\conv\locind[\anchorone]j\cap\conv\locind[\anchortwo]j, \\
  \overline{\Tjunc^{(0)}}\cap P_{j,m^{(0)}}(\MBox[\anchorone,\anchortwo]) \ne \nothing \ne
  \cell_{k^{(0)}}\cap \MBox[\anchorone,\anchortwo]_{k^{(0)}} 
  , \\
  \anchorone_{k^{(0)}}\cap\anchortwo_{k^{(0)}} = \nothing.
 \end{gather}
From $\anchorone_{k^{(0)}}\cap\anchortwo_{k^{(0)}} = \nothing$ we conclude that $\anchorone_{k^{(0)}}\ne\anchortwo_{k^{(0)}}$, and with 
\cref{wgas implies wdc::eq:locinds equal or nonoverlapping} and \cref{lemma: nonoverlapping implies tjunction}, we get
another \T-junction $\Tjunc^{(1)}\in\tjunctions_{k^{(0)}}$ with $\Tjunc^{(1)}_{k^{(0)}}=\{m^{(1)}\}$, $k^{(1)}=\pdir(\Tjunc^{(1)})$, $\cell^{(1)}=\ascell(\Tjunc^{(1)})$ such that
 \begin{gather}
  m^{(1)}\in\conv\locind[\anchorone]{k^{(0)}}\cap\conv\locind[\anchortwo]{k^{(0)}}, \\
  \overline{\Tjunc^{(1)}}\cap P_{k^{(0)},m^{(1)}}(\MBox[\anchorone,\anchortwo]) \ne \nothing \ne
  \cell_{k^{(1)}}\cap \MBox[\anchorone,\anchortwo]_{k^{(1)}} 
  , \\
  \anchorone_{k^{(1)}}\cap\anchortwo_{k^{(1)}} = \nothing.
 \end{gather}
The very same arguments repeated over again yield an infinite sequence of \T-junctions $\Tjunc^{(0)},\Tjunc^{(1)},\Tjunc^{(2)},\dots$ such that $\odir(\Tjunc^{(\ell+1)})=k^{(\ell)}=\pdir(\Tjunc^{(\ell)})$ and 
\begin{equation}
\overline{\Tjunc^{(\ell+1)}}\cap P_{k^{(\ell)},m^{(\ell+1)}}(\MBox[\anchorone,\anchortwo]) \ne \nothing
\quad\text{for all $\ell$.}
\end{equation}
Since the number of \T-junctions in the neighborhood of $\anchorone$ and $\anchortwo$ is finite, this sequence is a cycle $\Tjunc^{(0)},\dots,\Tjunc^{(K)}=\Tjunc^{(0)}$.

\emph{Conjecture:}\enspace There exists $\ell\in\{0,\dots,K-1\}$ with $\GTJ(\Tjunc^{(\ell)})\cap\GTJ(\Tjunc^{(\ell+1)})\ne\nothing$. Then the mesh is not $\WGAS$, which would conclude the proof.
\end{proof}

\section{Conclusions \& Outlook}\label{sec: outlook}

We have generalized the two existing concepts of analysis-suitability, an abstract concept introduced in \cite{Morgenstern16} and a geometric concept introduced in \cite{VeigaBuffaEtAl12}, 
to arbitrary dimension and degree.
We have, except for the $\WGAS$ criterion, shown their sufficiency for dual-compatibility and hence linear independence of the \T-spline basis, and investigated the implications between all introduced criteria, including counterexamples where an implication does not hold.

Ongoing work includes the implementation of \T-splines in two and three dimensions into \texttt{deal.ii} to solve simple elliptic PDEs  using \T-splines as ansatz functions, including local mesh refinement. Future work 
includes a proof that $\WGAS$ is sufficient for $\WDC$ is the three-dimensional case, and the numerical comparison to other approaches.

\appendix
\section{Minor proofs}
\subsection{\cref{lemma: child anchors have parent's knot vectors}}\label{appendix: child anchors have parent's knot vectors}
\begin{proof}
For any mesh entity $\faceE$ in the old mesh $\meshold$ with $\faceE_\ell\subset\overline{\cell_\ell}$ for $\ell\ne j$ and $\faceE_j=\cell_j$, 
the subdivision of $\cell$ removes $\faceE$ and inserts three children
\begin{align}
\faceE^{(1)}&=\faceE_1\times\dots\times\faceE_{j-1}\times(\inf\cell_j,\midp\cell_j)\times\faceE_{j+1}\times\dots\times\faceE_d, \\
\faceE^{(2)}&=\faceE_1\times\dots\times\faceE_{j-1}\times\{\midp\cell_j\}\times\faceE_{j+1}\times\dots\times\faceE_d, \\
\faceE^{(3)}&=\faceE_1\times\dots\times\faceE_{j-1}\times(\midp\cell_j,\sup\cell_j)\times\faceE_{j+1}\times\dots\times\faceE_d,
\end{align}
with $\midp\cell_j=\tfrac12(\inf\cell_j+\sup\cell_j)$.
From the premise of the claim we know that there is no \T-junction $\Tjunc$ with $\pdir(\Tjunc)=j$ in the $j$-orthogonal faces of $\cell$.
 We define below $\parents(\hat\anchor)$ for any new anchor $\hat\anchor$.
 
\emph{Case 1:}\enspace $p_j$ is odd, i.e., the anchors' $j$-th components are singletons.
 For any mesh entity $\faceE^{(\sup\cell_j)}=\faceE_1\times\dots\times\faceE_{j-1}\times\{\sup\cell_j\}\times\faceE_{j+1}\times\dots\times\faceE_d$ there is also the entity $\faceE^{(\inf\cell_j)}=P_{j,\inf\cell_j}(\faceE^{(\sup\cell_j)})$
 and vice versa.
 This is shown as follows.
 
 Assume for contradiction that there is an entity $\faceE^{(1)} \subset \partial\cell\cap \slice_j(\sup\cell_j)$ without counterpart in $\partial\cell\cap \slice_j(\inf\cell_j)$.
 For arbitrary $x^{(1)}\in\faceE^{(1)}$, its counterpart 
 $
 x^{(2)}=(x^{(1)}_1,\dots,x^{(1)}_{j-1},\inf\cell_j,x^{(1)}_{j+1},\dots,x^{(1)}_d)
 $
 lies in some $j$-orthogonal entity $\faceE^{(2)}\subset\partial\cell\cap \slice_j(\inf\cell_j)$.
 Since $\faceE^{(2)}\ne P_{j,\inf\cell_j}(\faceE^{(1)})$, there is $k\ne j$ with $\faceE^{(1)}_k\ne\faceE^{(2)}_k$.
 
 If $\faceE^{(1)}_k$ and $\faceE^{(2)}_k$ are both singletons, their inequality implies that they are disjoint in contradiction to  $x^{(1)}_k\in\faceE^{(1)}_k\cap\faceE^{(2)}_k\ne\nothing$. 
 Hence $\faceE^{(1)}_k$ and $\faceE^{(2)}_k$ are not both singletons.
 
 If $\faceE^{(1)}_k$ is a singleton, then $\faceE^{(2)}_k$ is an open interval, and we get $x^{(1)}\in\skel_k\not\ni x^{(2)}$.
 
 If similarly $\faceE^{(2)}_k$ is a singleton, then $\faceE^{(1)}_k$ is an open interval, and $x^{(1)}\notin\skel_k\ni x^{(2)}$.
 
 If both $\faceE^{(1)}_k$ and $\faceE^{(2)}_k$ are open intervals, then 
 $\faceE^{(1)}_k\ne\faceE^{(2)}_k$ yields $\faceE^{(1)}_k\not\subseteq\faceE^{(2)}_k$ or $\faceE^{(2)}_k\not\subseteq\faceE^{(1)}_k$, and
 we assume without loss of generality the first, i.e.\@ that $\faceE^{(1)}_k\setminus\faceE^{(2)}_k\ne\nothing$.
 Since $\faceE^{(1)}_k$ and $\faceE^{(2)}_k$ are open intervals, there is $y_k\in\faceE^{(1)}_k\cap\partial\faceE^{(2)}_k$, and the point $y^{(2)}=(x^{(2)}_1,\dots,x^{(2)}_{k-1},y_k,x^{(2)}_{k+1},\dots,x^{(2)}_d)$ lies in a $k$-orthogonal entity $\faceE^{(3)}\subset\partial\faceE^{(2)}$, hence $y^{(2)}\in\skel_k$, while 
 $y^{(1)}=(x^{(1)}_1,\dots,x^{(1)}_{k-1},y_k,x^{(1)}_{k+1},\dots,x^{(1)}_d)\in\faceE^{(1)}$
 satisfies $y^{(1)}\notin\skel_k$.
 
 In all cases, \cref{prop: if x in Ski but not y then there is a T-junction} yields a \T-junction $\Tjunc$ with
 $\overline\Tjunc\cap\overline\cell\ne\nothing$, $\odir(\Tjunc)=k$, $\pdir(\Tjunc) = j$, $\ascell(\Tjunc)_j\cap\overline{\cell_j}\ne\nothing$, since $x^{(1)}$ and $x^{(2)}$ (or $y^{(1)}$ and $y^{(2)}$, respectively) differ only in the $j$-th direction. Let $\Tjunc_k=\{t\}$, then $\overline\cell\cap\slice_\ell(t)\subset\GTJ(\Tjunc)$.
 By \cref{assump: active neighbor cells in 3 directions}, $\cell$ has active neighbor cells in at least three directions $\ell_1\ne\ell_2\ne\ell_3\ne\ell_1$. One of these directions is $j$ since $\Tjunc$ is not in the $j$-th frame region, as $\overline{\Tjunc}\cap\overline{\cell}\neq\nothing$ and in particular $\ascell(\Tjunc)_j\cap\overline{\cell}_j\neq\nothing$. At least one of the two remaining directions is not $k$, without loss of generality $\ell_1\ne k$.
 The bisection of $\cell$ creates or eliminates a $j$-orthogonal \T-junction $\Tjunc'\subset\partial\cell$ with $\pdir(\Tjunc')=\ell_1$ and $\nothing\ne \Tjunc'\cap \overline\cell\cap\slice_k(t) \subset\GTJ(\Tjunc')\cap\GTJ(\Tjunc)$, and $\meshold\notin\WGAS$ or $\meshnew\notin\WGAS$ in contradiction to above.
 This shows the claim that each entity $\faceE^{(\sup\cell_j)}\subset \partial\cell\cap \slice_j(\sup\cell_j)$ has a counterpart $\faceE^{(\inf\cell_j)}\subset\partial\cell\cap \slice_j(\inf\cell_j)$.
 
 For each such pair $\faceE^{(\inf\cell_j)},\faceE^{(\sup\cell_j)}$, the new mesh contains the entity
 $\faceE^{(\midp\cell_j)}=\faceE_1\times\dots\times\faceE_{j-1}\times\{\midp\cell_j\}\times\faceE_{j+1}\times\dots\times\faceE_d$.
 This particularly holds for the anchors, i.e., $\anchorsold$ contains pairs $(\anchor^{(\inf\cell_j)},\anchor^{(\sup\cell_j)})$ that lie in the boundary of $\cell$, and for each such pair, $\anchorsnew$ contains an anchor $\hat\anchor=\anchor^{(\midp\cell_j)}$.
 Consider a new anchor $\hat\anchor\in\anchorsnew\setminus\anchorsold$.
 We call $\anchor^{(\inf\cell_j)},\anchor^{(\sup\cell_j)}$ the parent anchors of $\hat\anchor$ and write
 $\parents(\hat\anchor)=\{\anchor^{(\inf\cell_j)},\anchor^{(\sup\cell_j)}\}$.
 
 \emph{Case 2:}\enspace $p_j$ is even, i.e., the anchors' $j$-th components are open intervals.
 The subdivision of $\cell$ removes any $\anchor=\anchor_1\times\dots\times\anchor_d$ with $\anchor_j=\cell_j$ and inserts
 $\hat\anchor^{(\inf\cell_j)},\hat\anchor^{(\sup\cell_j)}$ with
 \begin{align}
  \hat\anchor^{(\inf\cell_j)} &= \anchor_1\times\dots\times\anchor_{j-1}\times(\inf\cell_j,\midp\cell_j)\times\anchor_{j+1}\times\dots\times\anchor_d, \\
  \hat\anchor^{(\sup\cell_j)} &= \anchor_1\times\dots\times\anchor_{j-1}\times(\midp\cell_j,\sup\cell_j)\times\anchor_{j+1}\times\dots\times\anchor_d.
 \end{align}
We call $\anchor$ the parent anchor of $\hat\anchor^{(\inf\cell_j)}$ and $\hat\anchor^{(\sup\cell_j)}$ and write 
 $\parents(\hat\anchor^{(\inf\cell_j)})=\{\anchor\}$ and
 $\parents(\hat\anchor^{(\sup\cell_j)})=\{\anchor\}$. 
 
In both cases, any new anchor $\hat\anchor\in\anchorsnew\setminus\anchorsold$ is in direction $j$ aligned with its parent $\anchor\in\parents(\hat\anchor)$, and hence shares the same global index vector $\globind[\hat\anchor]j=\globind j$ in the new mesh $\meshnew$.
In what follows, we use the local index vector $\locind j$ of the old anchor with respect to the old and new mesh, 
and the local index vector $\locind[\hat\anchor]j$ of the new anchor with respect to the new mesh. In other directions $k\ne j$, the skeleton $\skel_k$ is unchanged as well as the global and local index vectors $\globind k$, $\locind k$, and therefore these refer to the new mesh, and to the old mesh where applicable. For the existence of \T-junctions below, we refer to the new mesh if not stated otherwise.

The subdivision of $\cell$ inserts $\midp\cell_j$ to these global index vectors, such that we have by construction $\locind[\hat\anchor]j\subset\locind j\cup\{\midp\cell_j\}$ and, since $\midp\cell_j\in\conv\locind j$, we have $\conv\locind[\hat\anchor]j\subset\conv\locind j$. If $\locind[\hat\anchor]\ell=\locind \ell$ for all $\ell\ne j$, this yields $\suppindBA[\hat\anchor]\subset\supp_{\inddomain,\meshold} B_\anchor$.

Assume for contradiction that there is $\hat\anchor\in\anchorsnew\setminus\anchorsold$, $\anchor\in\parents(\hat\anchor)$, and
$k\ne j$ with $\locind[\hat\anchor]k\ne\locind k$.
Since $\hat\anchor_k=\anchor_k$, the middle entries of $\locind[\hat\anchor]k$ and $\locind k$ coincide by construction.
Consequently, there is some $m<\inf\cell_k$ or $m>\sup\cell_k$ with
\begin{equation}\label{child anchors have parent's knot vectors: star}
 \locind[\hat\anchor]k \ni m \in \conv\locind k\setminus\locind k 
 \quad\text{or}\quad
 \locind k \ni m \in \conv\locind[\hat\anchor] k\setminus\locind[\hat\anchor] k.
\end{equation}
Without loss of generality we assume the latter cases, i.e., $m>\sup\cell_k$ and $\locind k \ni m \in \conv\locind[\hat\anchor] k\setminus\locind[\hat\anchor] k$.
\Cref{lemma: nonoverlapping implies tjunction} yields a \T-junction $\Tjunc$  with $\odir(\Tjunc)=k$, $\Tjunc_k=\{m\}$, $\tilde\cell=\ascell(\Tjunc)$ such that
\begin{gather}
\overline{\Tjunc}\cap P_{k,m}(\MBox[\hat\anchor,\anchor])\ne\nothing,
\\ \tilde\cell_{\pdir(\Tjunc)}\cap\MBox[\hat\anchor,\anchor]_{\pdir(\Tjunc)}\ne\nothing,
\quad \hat\anchor_{\pdir(\Tjunc)}\cap\anchor_{\pdir(\Tjunc)}=\nothing.
\end{gather}
Since $\hat\anchor$ and $\anchor$ differ only in direction $j$, we have $\pdir(\Tjunc)=j$, and with
\linebreak
$\MBox[\hat\anchor,\anchor]_j\subset\overline{\cell_j}$, we get
$\tilde\cell_j\cap\overline{\cell_j}\ne\nothing$.
Similarly, from $\MBox[\hat\anchor,\anchor]\subset\overline{\cell}$, we get
$\Tjunc_j\cap\overline{\cell_j}$, and since $\Tjunc_j$ is a singleton, this is $\Tjunc_j\subset\overline{\cell_j}$.

Having the existence of $\Tjunc$, there is also a \T-junction $\Tjunci0$ with the same properties as $\Tjunc$, which is closest to $\cell$ in direction $k$. 
We therefore consider the minimal $m_0>\sup\cell_k$ such that there is a \T-junction $\Tjunci0\in \tjunctions_k(\hat\mesh)$ with
\begin{gather}
\overline{\Tjunci0_\ell}\cap\hat\anchor_\ell\ne\nothing
\quad\text{for all }\ell\notin\{k,j\}, \\
\odir(\Tjunci0)=k, \quad
\pdir(\Tjunci0) = j,\quad
\Tjunci0_k=\{m_0\}%
\label{child anchors have parent's knot vectors:eq: T0k=m0}, \\
\Tjunci0_j\subset\overline{\cell_j}, \quad
\cell^{(0)}=\ascell(\Tjunci0),  \quad
\cell^{(0)}_j\cap\overline{\cell_j}\ne\nothing.
\end{gather}

\emph{Case 1:}\enspace $\locextind[\Tjunci0]j\cap\overline{\cell_j} \subseteq \locind[\hat\anchor]j\cap\overline{\cell_j}$.
Since $\locind[\hat\anchor]j\cap\overline{\cell_j}=\{\inf\cell_j,\midp\cell_j,\sup\cell_j\}$,
this leads to $\midp\cell_j\in\conv\locextind[\Tjunci0]j$ by construction of local knot vectors.

Since $\Tjunc$ with $\Tjunc_k=\{m\}$ from above is a $k$-orthogonal \T-junction, it is not in the $k$-th frame region, and $\sup\cell_k<m<N_k-\lfloor\tfrac{p_k+1}2\rfloor$, i.e.\@ $\cell$ does not touch the $k$-th frame region in positive direction.
Hence for any $x^{(0)}\in\overline\cell\cap\slice_k(\sup\cell_k)\cap\slice_j(\midp\cell_j)$,
the subdivision of $\cell$ creates or eliminates a \T-junction $\Tjunc^{(1)}$ with
\begin{gather}
\odir(\Tjunc^{(1)})=j, \quad
\pdir(\Tjunc^{(1)})=k, \quad
\Tjunc^{(1)}_k=\{\sup\cell_k\}%
\label{child anchors have parent's knot vectors:eq: T1=supQk}, \quad
\Tjunc^{(1)}_j=\{\midp\cell_j\}, \quad
x^{(0)}\in\overline{\Tjunc^{(1)}}.
\end{gather}
We choose $x^{(0)}$ such that $x^{(0)}_k=\sup\cell_k$, $x^{(0)}_j=\midp\cell_j$, and $x^{(0)}_\ell\in\overline{\Tjunci0_\ell}\cap\hat\anchor_\ell$ for all $\ell\notin\{k,j\}$.
This yields
\begin{equation}
 \label{child anchors have parent's knot vectors:eq:2}
\begin{gathered}
 x^{(0)}_\ell\in\overline{\Tjunc^{(1)}_\ell}\cap\overline{\Tjunci0_\ell}
 \subseteq\locextind[\Tjunc^{(1)}]\ell\cap\conv\locextind[\Tjunci0]\ell \ne\nothing
 \quad\text{for all }\ell\notin\{k,j\}, \\
 \quad\text{and}\quad
 \midp\cell_j\in\locextind[\Tjunc^{(1)}]j\cap\conv\locextind[\Tjunci0]j\ne\nothing.
\end{gathered}
\end{equation}

\emph{Case 1.1:}\enspace $\locextind[\Tjunc^{(1)}]k\cap(\sup\cell_k,m_0)\subseteq\locind[\hat\anchor]k\cap(\sup\cell_k,m_0)$.
By construction we know that $\#\locind[\hat\anchor]k=p_k+2$, and from $\sup\hat\anchor_k\le\sup\cell_k$ we get that
\begin{equation}
\#\{z\in\locind[\hat\anchor]k\mid z>\sup\cell_k\}
=\#\{z\in\locind[\hat\anchor]k\mid z\ge\sup\cell_k\}-1
\le\bigl\lceil\tfrac{p_k+2}2\bigr\rceil-1
=\bigl\lceil\tfrac{p_k}2\bigr\rceil.
\end{equation}
From \cref{child anchors have parent's knot vectors: star} and $m_0\in(\sup\cell_k,m]$ we know that $m_0\in\conv\locind[\hat\anchor]k$, and hence
\begin{equation}\label{child anchors have parent's knot vectors:eq: entries of vT1}
\#\bigl(\locextind[\Tjunc^{(1)}]k\cap(\sup\cell_k,m_0)\bigr)
\le\#\bigl(\locind[\hat\anchor]k\cap(\sup\cell_k,m_0)\bigr)\le\bigl\lceil\tfrac{p_k-2}2\bigr\rceil.
\end{equation}
Moreover, from \cref{child anchors have parent's knot vectors:eq: T1=supQk}
we have $\Tjunc^{(1)}_k=\{\sup\cell_k\}$ and hence by construction 
\begin{equation}
\#\{z\in\locextind[\Tjunc^{(1)}]k\mid z>\sup\cell_k\}
=\bigl\lfloor\tfrac{p_k+1}2\bigr\rfloor
=\bigl\lceil\tfrac{p_k}2\bigr\rceil. 
\end{equation}
Together with \cref{child anchors have parent's knot vectors:eq: entries of vT1}, there exists $z\in\locextind[\Tjunc^{(1)}]k$ with $z\ge m_0$, and hence $m_0\in\conv\locextind[\Tjunc^{(1)}]k$.
Together with \cref{child anchors have parent's knot vectors:eq: T0k=m0}, this is
$m_0\in\conv\locextind[\Tjunci0]k\cap\locextind[\Tjunc^{(1)}]k\ne\nothing$,
and together with \cref{child anchors have parent's knot vectors:eq:2}, $\meshold$ or $\meshnew$ is not $\WGAS$ in contradiction to the assumption.

\emph{Case 1.2:}\enspace There exists some $m_2\in\locextind[\Tjunc^{(1)}]k\cap(\sup\cell_k,m_0)\setminus\locind[\hat\anchor]k$.
\Cref{lemma: if wgas then projections are not partially in the skeleton} yields that
for any
$x^{(1)}\in P_{k,m_2}(\hat\anchor)$, $y^{(1)}\in\overline{P_{k,m_2}(\Tjunc^{(1)})}$ holds
$x^{(1)}\notin\skel_k\ni y^{(1)}$.
Choose $x^{(1)},y^{(1)}$ such that $x^{(1)}_\ell=y^{(1)}_\ell$
for all $\ell\ne j$. This is possible because
$x^{(1)}_k=y^{(1)}_k$ holds trivially and $x^{(0)}_\ell\in\overline{\Tjunci0_\ell}\cap\overline{\Tjunci1_\ell}\cap\overline{\hat\anchor_\ell}$ for $\ell\notin\{k,j\}$ from \cref{child anchors have parent's knot vectors:eq:2} and above.
\Cref{prop: if x in Ski but not y then there is a T-junction} yields another \T-junction $\Tjunc^{(2)}$ and $\cell^{(2)}=\ascell(\Tjunc^{(2)})$ with 
\begin{gather}
 \odir(\Tjunc^{(2)})=k, \quad
 x^{(1)}_{\pdir(\Tjunc^{(2)})}\ne y^{(1)}_{\pdir(\Tjunc^{(2)})}\text{ and hence }\pdir(\Tjunc^{(2)})=j, \\
 \Tjunc^{(2)}_k=\{m_2\}, \quad
\overline{\Tjunci2}\cap\conv\{x^{(1)},y^{(1)}\}\ne\nothing,
\quad
 \cell^{(2)}_j\cap\overline{\cell_j}\ne\nothing.
\end{gather}
From $\overline{\Tjunci2}\cap\conv\{x^{(1)},y^{(1)}\}\ne\nothing$ and $\Tjunci2_j$ being a singleton, we get
\begin{gather}
     \Tjunc^{(2)}_j\subseteq\conv\bigl(\overline{\hat\anchor_j}\cup\overline{\Tjunci1_j}\bigr)\subseteq\overline{\cell_j}, \\
 x^{(1)}_\ell=y^{(1)}_\ell\in\overline{\Tjunci2}_\ell\cap\hat\anchor_\ell\ne\nothing\quad\text{for all }\ell\notin\{k,j\}
\end{gather}
in contradiction to the minimality of $m_0$. This ends Case~1.

\emph{Case 2:}\enspace There is $m_1\in\locextind[\Tjunci0]j\cap\overline{\cell_j}$ with $m_1\notin\locind[\hat\anchor]j\cap\overline{\cell_j}$.
\Cref{lemma: if wgas then projections are not partially in the skeleton} yields that 
$x^{(0)}\notin\skel_j\ni y^{(0)}$ holds for all $x^{(0)}\in P_{j,m_1}(\hat\anchor), y^{(0)}\in\overline{ P_{j,m_1}(\Tjunci0)}$.
We choose $x^{(0)},y^{(0)}$ such that $x^{(0)}_\ell=y^{(0)}_\ell\in\overline{\Tjunci0_\ell}\cap\hat\anchor_\ell$ for all $\ell\notin\{k,j\}$, and $x^{(0)}_j=y^{(0)}_j=m_1$.
\Cref{prop: if x in Ski but not y then there is a T-junction} yields 
$\Tjunc^{(2)}\in\tjunctions_j$ with
\begin{gather}
 \overline{\Tjunc^{(2)}}\cap\conv(\overline{P_{j,m_1}(\Tjunci0)}\cup P_{j,m_1}(\hat\anchor))\ne\nothing,\quad
 \pdir(\Tjunc^{(2)})=k,\quad
 \Tjunc^{(2)}_j=\{m_1\},\\
 y^{(0)}_\ell\in\overline{\Tjunc^{(2)}_\ell}\cap\hat\anchor_\ell\ne\nothing\quad\text{for all }\ell\notin\{k,j\}.
\end{gather}
From \cref{child anchors have parent's knot vectors: star} and 
$\sup\cell_k <m_0\le m$ we get $m_0\in\conv\locind[\hat\anchor]k$.

\emph{Case 2.1:}\enspace $\locextind[\Tjunc^{(2)}]k\cap(\sup\cell_k,m_0)\subseteq\locind[\hat\anchor]k\cap(\sup\cell_k,m_0)$.
This leads to $m_0\in\conv\locextind[\Tjunc^{(2)}]k$ and hence
\begin{align}
 m_0\in\conv\locextind[\Tjunci0]k\cap\locextind[\Tjunc^{(2)}]k&\ne\nothing,\quad
 m_1\in\conv\locextind[\Tjunci0]j\cap\locextind[\Tjunc^{(2)}]j\ne\nothing,\\
 y^{(0)}_\ell\in\conv\locextind[\Tjunci0]\ell\cap\locextind[\Tjunc^{(2)}]\ell&\ne\nothing
 \quad\text{for all }\ell\notin\{k,j\},\\
 \pdir(\Tjunci0) \ne \odir(\Tjunci0) = k &= \pdir(\Tjunc^{(2)}) \ne \odir(\Tjunc^{(2)}) ,
\end{align}
which means that $\meshnew$ is not $\WGAS$ in contradiction to the assumption.

\emph{Case 2.2:}\enspace $\Exists m_2\in\locextind[\Tjunc^{(2)}]k\cap(\sup\cell_k,m_0)\setminus\locind[\hat\anchor]k$.
\Cref{lemma: if wgas then projections are not partially in the skeleton} yields that
for any 
$x^{(1)}\in P_{k,m_2}(\hat\anchor)$, $y^{(1)}\in\overline{P_{k,m_2}(\Tjunc^{(2)})}$ holds
$x^{(1)}\notin\skel_k\ni y^{(1)}$.
Choose $y^{(1)}$ such that $y^{(1)}_\ell\in \overline{\Tjunc^{(2)}_\ell}$ for all $\ell\notin\{k,j\}$, $y^{(1)}_k=m_2=x^{(1)}_k$, and $\Tjunc^{(2)}_j=\{y^{(1)}_j\}$.
\Cref{prop: if x in Ski but not y then there is a T-junction} yields another \T-junction $\Tjunc^{(3)}$ and $\cell^{(3)}=\ascell(\Tjunc^{(3)})$ with 
\begin{equation}
 \odir(\Tjunc^{(3)})=k, \quad
 \Tjunc^{(3)}_k=\{m_2\}, \quad
 \Tjunc^{(3)}_j\subset\overline{\cell_j}, \quad
 \cell^{(3)}_j\cap\overline{\cell_j}\ne\nothing
\end{equation}
in contradiction to the minimality of $m_0$. This ends Case~2.2 and concludes the proof.
\end{proof}

\subsection{\cref{lemma: if things hold then x is in GTJ}}\label{appendix: if things hold then x is in GTJ}
\begin{proof}
 We set
 \begin{equation}
\partsupp \coloneqq
 \begin{cases}
  [\min\locind\ell,\inf\anchor_\ell]\cup\anchor_\ell  &\text{if }x_\ell<y\text{ for all }y\in\anchor_\ell,\\
  \anchor_\ell  &\text{if }x_\ell\in\anchor_\ell , \\
  \anchor_\ell\cup[\sup\anchor_\ell,\max\locind\ell]  &\text{if }x_\ell>y\text{ for all }y\in\anchor_\ell.
 \end{cases}
 \end{equation}
 Then we have by construction for $p_\ell\ge1$ that
 \begin{equation}
 \label{eq: partsupp contains conv of A and z}
 \partsupp \supseteq \conv(\anchor_\ell\cup\{x_\ell\}) .
 \end{equation}
 The combination of \cref{eq: T touches conv of PA and x,eq: partsupp contains conv of A and z} yields
 \begin{equation}
  \label{eq: T touches partsupp}
  \nothing\ne\overline{\Tjunc_\ell}\cap\conv(\anchor_\ell\cup\{x_\ell\}) 
  \subseteq\overline{\Tjunc_\ell}\cap\partsupp .
 \end{equation}
We distinguish eight cases illustrated in \cref{tab: cases}.

\begin{table}[ht]
\caption{overview of cases in the proof of \cref{lemma: if things hold then x is in GTJ}.}
\label{tab: cases}
\centering
\begin{tabular}{l|c|c|c|c}
&\multicolumn{2}{c|}{$i\ne\ell=\pdir(\Tjunc)$}&\multicolumn{2}{c}{$i\ne\ell\ne\pdir(\Tjunc)$}\\
&$p_\ell$ odd&$p_\ell$ even&$p_\ell$ odd&$p_\ell$ even\\\hline
$x_\ell\in\anchor_\ell$ & case~1 & case~2 & case~3 & case~4\\
$x_\ell\notin\anchor_\ell$ & case~8 & case~7 & case~6 & case~5\\
\end{tabular}
\end{table}

\emph{Case 1:} $x_\ell\in\anchor_\ell$, $i\ne\ell=\pdir(\Tjunc)$, and $p_\ell$ is odd.
Since $p_\ell$ is odd, $\anchor_\ell$ is a singleton, i.e.\ $\anchor_\ell=\{x_\ell\}$, in contradiction to the existence of $y\in\anchor_\ell$ with $y\ne x_\ell$ from \cref{eq: A and x differ in pdir} above.

\emph{Case 2:} $x_\ell\in\anchor_\ell$, $i\ne\ell=\pdir(\Tjunc)$, and $p_\ell$ is even.
Then $\anchor_\ell$ is an open interval and $\Tjunc_\ell=\{t\}$ is a singleton. From \cref{eq: T touches partsupp} we obtain 
\begin{equation}
t\in\partsupp=\anchor_\ell\subset\conv\locind\ell\setminus\locind\ell.
\end{equation}
From the definition of local index vectors we also know $t\in\locextind\ell$, which yields $\locind\ell\not\overlaps\locextind\ell$ in contradiction to \cref{eq: overlap in direction ell}. 

\emph{Case 3:} $x_\ell\in\anchor_\ell$, $i\ne\ell\ne\pdir(\Tjunc)$, and $p_\ell$ is odd.
Then $\Tjunc_\ell$ is an open interval, and we have $\partsupp=\anchor_\ell=\{ x_\ell\}$. Hence $x_\ell\in\locind\ell$.
\Cref{eq: T touches partsupp} yields $x_\ell\in\overline{\Tjunc_\ell}\subseteq\conv\locextind\ell$.

\emph{Case 4:} $x_\ell\in\anchor_\ell$, $i\ne\ell\ne\pdir(\Tjunc)$, and $p_\ell$ is even.
Then $\Tjunc_\ell$ and $\anchor_\ell$ are  open intervals, and \cref{eq: T touches partsupp} yields that 
$\overline{\Tjunc_\ell}\cap\anchor_\ell\ne\nothing$. Together with \cref{eq: overlap in direction ell}, we have 
$x_\ell\in\anchor_\ell=\Tjunc_\ell\subset\conv\locextind\ell$.

\emph{Case 5:} $x_\ell\notin\anchor_\ell$, $i\ne\ell\ne\pdir(\Tjunc)$, and $p_\ell$ is even.
Assume without loss of generality that $x_\ell>y$ for all $y\in\anchor_\ell$. In this case, we have $\partsupp=\anchor_\ell\cup[\sup\anchor_\ell,\max\locind\ell]$ with $\anchor_\ell$ being an open interval and $x_\ell\in[\sup\anchor_\ell,\max\locind\ell]$. Also $\Tjunc_\ell$ is an open interval and 
\cref{eq: T touches partsupp,eq: overlap in direction ell} 
yield that either $\Tjunc_\ell=\anchor_\ell$ or $\inf\Tjunc_\ell\in[\sup\anchor_\ell,\max\locind\ell]$, this is,
$\overline{\Tjunc_\ell}\cap[\sup\anchor_\ell,\max\locind\ell]\ne\nothing$. The knot vector $\locextind\ell$ contains $\tfrac{p_\ell}2+1$ entries that are not smaller than $\sup\Tjunc_\ell$ and $\tfrac{p_\ell}2+1$ entries that are not greater than $\inf\Tjunc_\ell$. The interval $[\sup\anchor_\ell,\max\locind\ell]$ contains $\tfrac{p_\ell}2+1$ entries of $\locind\ell$. Together with
\cref{eq: overlap in direction ell}, 
all entries of $\locind\ell\cap[\sup\anchor_\ell,\max\locind\ell]$ match with entries of $\locextind\ell$, and
we get $x_\ell\in[\sup\anchor_\ell,\max\locind\ell]\subseteq\conv\locextind\ell$.

\emph{Case 6:} $x_\ell\notin\anchor_\ell$, $i\ne\ell\ne\pdir(\Tjunc)$, and $p_\ell$ is odd.
Then $\anchor_\ell$ is a singleton $\anchor_\ell=\{s\}$. Assume without loss of generality that $x_\ell>s$, then we have 
$\partsupp=\anchor_\ell\cup[\sup\anchor_\ell,\max\locind\ell]=[s,\max\locind\ell]$ which contains $\lceil\tfrac{p_\ell}2\rceil+1$ entries of $\locind\ell$. As in case~5 above, we have $\overline{\Tjunc_\ell}\cap[s,\max\locind\ell]\ne\nothing$, and $\locextind\ell$ containing $\lceil\tfrac{p_\ell}2\rceil+1$ entries that are $\ge\sup\Tjunc_\ell$ and $\lceil\tfrac{p_\ell}2\rceil+1$ entries that are $\le\inf\Tjunc_\ell$.
Together with
\cref{eq: overlap in direction ell}, we get $x_\ell\in[s,\max\locind\ell]\subseteq\conv\locextind\ell$.

\emph{Case 7:} $x_\ell\notin\anchor_\ell$, $i\ne\ell=\pdir(\Tjunc)$, and $p_\ell$ is even.
Then $\anchor_\ell$ is an open interval, and we assume without loss of generality $x_\ell>y$ for all $y\in\anchor_\ell$, obtaining $\partsupp=\anchor_\ell\cup[\sup\anchor_\ell,\max\locind\ell]$ with $x_\ell\in[\sup\anchor_\ell,\max\locind\ell]$.
Since $\ell=\pdir(\Tjunc)$, $\Tjunc_\ell$ is a singleton $\Tjunc_\ell=\{t\}=\overline{\Tjunc_\ell}$, and \cref{eq: T touches partsupp} yields $t\in\partsupp$. 
Together with $t\in\locextind\ell$ and \cref{eq: overlap in direction ell}, we get that  $t\in\locind\ell\cap[\sup\anchor_\ell,\max\locind\ell]$. The partial index vector $\locind\ell\cap[\sup\anchor_\ell,\max\locind\ell]$ contains $\tfrac{p_\ell}2+1$ entries of $\locind\ell$, while $\locextind\ell$ contains $\tfrac{p_\ell}2+1$ entries that are $\ge t$ and $\tfrac{p_\ell}2+1$ entries that are $\le t$. As in previous cases, we obtain with \cref{eq: overlap in direction ell} that 
$\locind\ell\cap[\sup\anchor_\ell,\max\locind\ell]\subset\locextind\ell$ and consequently 
$x_\ell\in[\sup\anchor_\ell,\max\locind\ell]\subset\conv\locextind\ell$.

\emph{Case 8:} $x_\ell\notin\anchor_\ell$, $i\ne\ell=\pdir(\Tjunc)$, and $p_\ell$ is odd.
Then $\anchor_\ell$ is a singleton $\anchor_\ell=\{s\}$. Assume without loss of generality that $x_\ell>s$, then we have 
$\partsupp=\anchor_\ell\cup[\sup\anchor_\ell,\max\locind\ell]=[s,\max\locind\ell]$ which contains $\lceil\tfrac{p_\ell}2\rceil+1$ entries of $\locind\ell$.
Since $\ell=\pdir(\Tjunc)$, $\Tjunc_\ell$ is a singleton $\Tjunc_\ell=\{t\}=\overline{\Tjunc_\ell}$, and \cref{eq: T touches partsupp} yields $t\in\partsupp=[s,\max\locind\ell]$. 
Moreover, $t\in\partial\cell_\ell=\{\inf\cell_\ell,\sup\cell_\ell\}\subseteq\locextind\ell$ for the associated cell $\cell=\ascell(\Tjunc)$ from the definition
\cref{as_tsplines::def::gtj::eq::pdir}.
By construction of the knot vector we have $\cell_\ell\subset\conv\locextind\ell\setminus\locextind\ell$,
and with \cref{eq: overlap in direction ell} we obtain $\cell_\ell\cap\locind\ell=\nothing$. Consequently $s,\max\locind\ell\notin\cell_\ell$ and hence we have either $\cell_\ell\subset[s,\max\locind\ell]$ or $\cell_\ell\cap[s,\max\locind\ell]=\nothing$.
Together with \cref{eq: ascell touches conv of PApdir and xpdir} we have $\cell_\ell\subset[s,\max\locind\ell]$, and, since $[s,\max\locind\ell]$ is closed, $\overline{\cell_\ell}\subset[s,\max\locind\ell]$.
The combination with \cref{eq: overlap in direction ell} yields that 
$\{\inf\cell_\ell,\sup\cell_\ell\}\subseteq\locind\ell\cap [s,\max\locind\ell]$.
Since $\locextind\ell$ contains $\lceil\tfrac{p_\ell}2\rceil+1$ entries that are $\ge\inf\cell_\ell$ and $\lceil\tfrac{p_\ell}2\rceil+1$ entries that are $\le\sup\cell_\ell$,  \cref{eq: overlap in direction ell} yields
$\locind\ell\cap[s,\max\locind\ell]\subseteq\locextind\ell$ and hence
$x_\ell\in[s,\max\locind\ell]\subset\conv\locextind\ell$.

We have shown the claim in all cases, which concludes the proof.
\end{proof}

\subsection{\cref{prop: sgas implies overlapping}}\label{appendix: sgas implies overlapping}
\begin{proof}
 The proof is by induction over box bisections. As assumed in \cref{sec: highdim tjunctions}, $\mesh$ is constructed via symmetric bisections of boxes from an initial tensor-product mesh. For a tensor-product mesh, the claim is trivially true due to the absence of \T-junctions.
 Assume that the claim is true for an $\SGAS$ mesh $\mesh$ and consider an $\SGAS$ mesh $\hat\mesh=\subdiv(\mesh,\cell,j)$ that results from the $j$-orthogonal bisection of a cell $\cell\in\mesh$. 
 Since this bisection inserts only one $j$-orthogonal hyperface $\face=\cell_1\times\dots\times\cell_{j-1}\times\{r\}\times\cell_{j+1}\times\dots\times\cell_d$  and lower-dimensional entities that are subsets of other, previously present entities, only the $j$-orthogonal skeleton $\skel_j(\hat\mesh)\supsetneq\skel_j(\mesh)$ is modified, while all other $i$-orthogonal skeletons $\skel_i(\hat\mesh)=\skel_i(\mesh)$, $i\ne j$, remain unchanged.
 Hence for any anchor or \T-junction that exist in both meshes, the local knot vectors (or knot vectors, resp.) remain unchanged in all directions $i\ne j$.
 In the following, all knot vectors are understood with respect to the refined mesh $\hat\mesh$.
 
 Assume for contradiction that in the new mesh $\hat\mesh$, there exist $\anchor\in\anchorsnew$ and a \T-junction $\Tjunc$ with  $\overline\Tjunc\cap\suppindBA\ne\nothing$, and $\locind k\not\overlaps\locextind k$ for some $k\ne \odir(\Tjunc)$.
The non-overlapping $\locind k\not\overlaps\locextind k$ means that there is $m\in\{0,\dots,N_k\}$ with 
 \begin{equation}\label{eq: proof of sgas implies overlapping: vA and vT conflict in m}
 \locind k \ni m\in\conv(\locextind k)\setminus\locextind k
 \quad\text{or}\quad
 \locextind k\ni m \in\conv(\locind k)\setminus\locind k.
 \end{equation}
 \Cref{lemma: if wgas then projections are not partially in the skeleton} yields that 
 for any $x\in\overline{P_{k,m}(\Tjunc)}$, $y\in P_{k,m}(\anchor)$ holds $x\in\skel_k\not\ni y$ or $x\notin\skel_k\ni y$. \Cref{prop: if x in Ski but not y then there is a T-junction} yields a \T-junction $\Tjuncone\in\tjunctions_k$ and associated cell $\cell=\ascell(\Tjuncone)$ with 
 \begin{gather}\label{eq: proof of sgas implies overlapping: T1 touches conv of PA and x}
 \overline{\Tjuncone}\cap\conv\bigl(P_{k,m}(\anchor)\cup \{x\}\bigr)\ne\nothing, 
 \\ \label{eq: proof of sgas implies overlapping: ascell touches conv of PApdir and xpdir}
 \cell_{\pdir(\Tjuncone)}\cap\conv(\anchor_{\pdir(\Tjuncone)}\cup\{x_{\pdir(\Tjuncone)}\})\ne\nothing, 
 \\ \label{eq: proof of sgas implies overlapping: A and x differ in pdir}
 \Exists y'\in\anchor_{\pdir(\Tjuncone)}\colon y'\ne x_{\pdir(\Tjuncone)}.
 \end{gather}
 We know that there is $z\in\suppindBA\cap\overline{\Tjunc}\ne\nothing$. 
 We  deduce from \cref{eq: proof of sgas implies overlapping: vA and vT conflict in m} that $\min\locind k\le m\le\max\locind k$ and hence
 \begin{equation}
 P_{k,m}(z) = (z_1,\dots, z_{k-1},m,z_{k+1},\dots,z_d)
 \in\suppindBA\cap\overline{P_{k,m}(\Tjunc)}.
 \end{equation}
 We choose $x=P_{k,m}(z)$ in \cref{eq: proof of sgas implies overlapping: T1 touches conv of PA and x} and obtain
 \begin{equation}\label{eq: proof of sgas implies overlapping: T1 touches suppBA}
 \begin{aligned}
 \nothing&\ne\overline{\Tjuncone}\cap\conv(P_{k,m}(\anchor)\cup\{x\})
 \\&\subset\overline{\Tjuncone}\cap\conv(P_{k,m}(\anchor)\cup\suppindBA)
 =\overline{\Tjuncone}\cap\suppindBA.
 \end{aligned}
 \end{equation}

\emph{Case 1:}\enspace$\odir(\Tjunc)=j$ and $\anchor$ is old, i.e.\@ $\anchor\in\anchorsnew\cap\anchorsold$.
For all old anchors and \T-junctions from $\mesh$, the knot vectors in directions other than $j$ are unchanged,
and the claim is still true. Hence $\Tjunc$ is a new \T-junction with $\Tjunc_j=\{r\}$.
Since $\odir(\Tjuncone)=k\ne j$, $\Tjuncone$ is an old \T-junction and we have 
$\locind \ell\overlaps\locextind[\Tjuncone]\ell$ in the old mesh $\mesh$ for all $\ell\ne k$, and consequently
\begin{equation}\label{eq: proof of sgas implies overlapping: overlapping for l notin j k}
\locind \ell\overlaps\locextind[\Tjuncone]\ell\text{ in }\hat\mesh
\quad\text{for all }\ell\notin\{j, k\}.
 \end{equation}
The combination of \cref{eq: proof of sgas implies overlapping: T1 touches conv of PA and x,eq: proof of sgas implies overlapping: ascell touches conv of PApdir and xpdir,eq: proof of sgas implies overlapping: A and x differ in pdir,eq: proof of sgas implies overlapping: overlapping for l notin j k,lemma: if things hold then x is in GTJ} yields 
 $x_\ell\in \conv\locextind[\Tjuncone]\ell$
for all $\ell\notin\{j, k\}$.
By construction, we also have 
\begin{equation}
x_k\in\{x_k\}=\{m\}=\Tjuncone_k=\locextind[\Tjuncone]k=\conv\locextind[\Tjuncone]k.
\end{equation}
Moreover, we have 
\begin{equation}
 \label{eq: proof of sgas implies overlapping: x in GTJ of T}
 x\in \overline{P_{k,m}(\Tjunc)}\subseteq\GTJ(\Tjunc).
\end{equation}
and hence 
\begin{equation}
 \label{eq: proof of sgas implies overlapping: x in GTJ component of T}
 x_\ell\in \conv\locextind\ell
 \quad\text{for all }\ell\in\{1,\dots,d\}.
\end{equation}

If $\locind j\overlaps\locextind[\Tjuncone]j$ in $\hat\mesh$, then from \cref{lemma: if things hold then x is in GTJ} there also holds $x_\ell\in \conv\locextind[\Tjuncone]\ell$ for $\ell=j$ and hence
\begin{equation}
 \label{eq: proof of sgas implies overlapping: x in GTJ of T1}
  x \in \bigtimes_{\ell=1}^d\conv\locextind[\Tjuncone]\ell
  =\GTJ(\Tjuncone),
\end{equation}
and the combination of \cref{eq: proof of sgas implies overlapping: x in GTJ of T1,eq: proof of sgas implies overlapping: x in GTJ of T} yields that the mesh $\hat\mesh$ is not $\SGAS$.

If on the other hand $\locind j\not\overlaps\locextind[\Tjuncone]j$   in   $\hat\mesh$,
then there is $s\in\{0,\dots,N_j\}$ with 
 \begin{equation}\label{eq: proof of sgas implies overlapping: vA and vT1 conflict in s}
 \locind j \ni s\in\conv(\locextind[\Tjuncone] j)\setminus\locextind[\Tjuncone] j
 \quad\text{or}\quad
 \locextind[\Tjuncone] j\ni s \in\conv(\locind j)\setminus\locind j.
 \end{equation}
 Since $\Tjuncone$ is an old \T-junction with $\locind j\overlaps\locextind[\Tjuncone]j$ in the old mesh $\mesh$, and the only entry added to any global knot vector by the subdivision of $\cell$ is $r$, we obtain $s=r$ and hence
\begin{equation}\label{eq: proof of sgas implies overlapping: r in vjA and in vjT1}
 r\in\conv\locind j\cap\conv\locextind[\Tjuncone]j.
\end{equation}
Since the mesh is supposed to be $\SGAS$, we have $\GTJ(\Tjunc)\cap\GTJ(\Tjuncone)=\nothing$ and hence there is $\ell\in\{1,\dots,d\}$ with
\begin{equation}\label{eq: proof of sgas implies overlapping: GTJs must not intersect}
\conv\locextind\ell\cap\conv\locextind[\Tjuncone]\ell=\nothing.
\end{equation}
Then $\ell=j$, since for $\ell\ne j$ we already found that
$x_\ell\in \conv\locextind[\Tjuncone]\ell\cap\conv\locextind\ell\ne\nothing$.
By definition of \T-junction extensions, we have $\conv\locextind j=\{r\}$.
Together with \cref{eq: proof of sgas implies overlapping: GTJs must not intersect}, this yields
$r\notin\conv\locextind[\Tjuncone]j$ in contradiction to \cref{eq: proof of sgas implies overlapping: r in vjA and in vjT1}.

\emph{Case 2:}\enspace$\odir(\Tjunc)\ne j$ and $\anchor\in\anchorsnew\cap\anchorsold$.
 Then $\Tjunc$ is an old \T-junction since all new \T-junctions are $j$-orthogonal. Note that $\hat \mesh\in\SGAS$ eliminates the possibility of $k$-orthogonal \T-junctions, $k\neq j$, being subdivided, e.g. subdividing cell $\cell$ in \cref{fig: t-junction examples} is prohibited.
 Since the claim was true in $\mesh$ and only $j$-orthogonal knot vectors have been affected by the bisection, we have $k=j$.
 Since we have $\locind j\overlaps\locextind j$ in $\mesh$ and $\locind j\not\overlaps\locextind j$ in $\hat\mesh$,
 there is a new \T-junction $\Tjuncone$ that satisfies \cref{eq: proof of sgas implies overlapping: T1 touches conv of PA and x}.
 For new \T-junctions $\Tjuncone$ that satisfy \cref{eq: proof of sgas implies overlapping: T1 touches suppBA}, we have shown in case~1 that the claim $\locind\ell\overlaps\locextind[\Tjuncone]\ell$
 holds for all $\ell\ne j$. 
 Again, \cref{lemma: if things hold then x is in GTJ} yields $x_\ell\in \conv\locextind[\Tjuncone]\ell$ for all $\ell\ne j$. Moreover, $x_j=z_j\in\overline{\Tjunc_j}\subseteq\conv\locextind j$.
 We again obtain $x\in\GTJ(\Tjunc)\cap\GTJ(\Tjuncone)\ne\nothing$, which concludes this case.

\emph{Case 3:}\enspace $\anchor\in\anchorsnew\setminus\anchorsold$.
\Cref{lemma: child anchors have parent's knot vectors} yields an old anchor $\tilde\anchor\in\anchorsnew\cap\anchorsold$ with 
$\suppindBA\subseteq\suppindBA[\tilde\anchor]$ and 
$\locind\ell=\locind[\tilde\anchor]\ell$ for all $\ell\ne j$. Then we have $\overline\Tjunc\cap\suppindBA \subseteq \overline\Tjunc\cap\suppindBA[\tilde\anchor]\ne\nothing$ and the cases 1 and 2 prove the claim.

\emph{Case 3.1:}\enspace$\odir(\Tjunc)=j$.
Similar to case~1, $\Tjuncone$ is an old \T-junction and we have 
$\locind[\tilde\anchor]\ell\overlaps\locextind[\Tjuncone]\ell$ in the old mesh $\mesh$ for all $\ell\ne k$, and consequently
\begin{equation}\label{eq: proof of sgas implies overlapping 3: overlapping for l notin j k}
\locind \ell\overlaps\locextind[\Tjuncone]\ell\text{ in }\hat\mesh
\quad\text{for all }\ell\notin\{j, k\}.
 \end{equation}
The combination of \cref{eq: proof of sgas implies overlapping: T1 touches conv of PA and x,eq: proof of sgas implies overlapping: ascell touches conv of PApdir and xpdir,eq: proof of sgas implies overlapping: A and x differ in pdir,eq: proof of sgas implies overlapping 3: overlapping for l notin j k,lemma: if things hold then x is in GTJ} yields 
 $x_\ell\in \conv\locextind[\Tjuncone]\ell$
for all $\ell\notin\{j, k\}$.
The remaining arguments follow as is case~1.

\emph{Case 3.2:}\enspace$\odir(\Tjunc)\ne j$.
 Then $\Tjunc$ is an old \T-junction and  $k=j$ as in case~2.
 We have $\locind[\tilde\anchor] j\overlaps\locextind j$ in $\mesh$ and $\locind j\not\overlaps\locextind j$ in $\hat\mesh$, and we have $\locind j=\locind[\tilde\anchor] j\cup\{r\}\setminus\{s\}$ with $s\in\{\inf\locind[\tilde\anchor] j,\sup\locind[\tilde\anchor] j\}$.
 This leads to $\locind j\ni\{r\}\in\conv\locextind j\setminus\locextind j$.
 Hence there is a new \T-junction $\Tjuncone$ that satisfies \cref{eq: proof of sgas implies overlapping: T1 touches conv of PA and x}, and the arguments of case~2 follow similarly.
\end{proof}

\end{document}